\newtheorem{theorem}{Theorem}[section]
\newtheorem{corollary}[theorem]{Corollary}
\newtheorem{proposition}[theorem]{Proposition}
\newtheorem{lemma}[theorem]{Lemma}
\newtheorem{definition}[theorem]{Definition}
\newtheorem{example}[theorem]{Example}
\newtheorem{remark}[theorem]{Remark}
\newcommand{\been}{\begin{enumerate}}
\newcommand{\enen}{\end{enumerate}}
\newcommand{\beit}{\begin{itemize}}
\newcommand{\enit}{\end{itemize}}
\def\CC{\mathcal C}
\def\II{\mathcal I}
\def\PP{\mathcal P}
\def\HH{\mathcal{H}}
\def\DD{\mathcal{D}}
\def\BB{\mathcal B}
\def\one{{\mathds 1}}
\def\diag{{\rm diag}}
\def\spn{{\rm span}}
\def\vect{\mathbf}
\def\ep{\epsilon}
\newcommand{\R}{\mathbb{R}}
\newcommand{\Z}{\mathbb{Z}}
\newcommand{\C}{\mathbb{C}}
\newcommand{\specialcell}[2][c]{\begin{tabular}[#1]{@{}c@{}}#2\end{tabular}}
\numberwithin{equation}{section}
\newcommand\cca[1]{{%
  \ooalign{\raisebox{-.6ex}{\larger[12]$\circlearrowright$}\cr
    \hidewidth$\,#1$\hidewidth}}}
 \tikzset{every node/.style={auto}}
 \tikzset{every state/.style={rectangle, minimum size=0pt, draw=none, font=\normalsize}}
  \tikzset{bend angle=7}
  \def\wt{\widetilde}
  \def\ol{\overline}
  \def\bd{\mathrm{bd}~}
\title{Autocatalytic Networks:  \\ An Intimate Relation between Network Topology and Dynamics}
\author{Badal Joshi\footnotemark[1] \and Gheorghe Craciun\footnotemark[2]}
\begin{document}

 \footnotetext[1]{Department of Mathematics, California State University San Marcos.}
 \footnotetext[2]{Departments of Mathematics and Biomolecular Chemistry, University of Wisconsin-Madison.}

\date{}



\maketitle

\begin{abstract}

We study a family of networks of autocatalytic reactions, which we call hyperchains, that are a  generalization of hypercycles. Hyperchains, and the associated dynamical system called replicator equations, are a possible mechanism for macromolecular evolution and proposed to play a role in abiogenesis, the origin of life from prebiotic chemistry. The same dynamical system also occurs in evolutionary game dynamics, genetic selection, and as Lotka-Volterra equations of ecology. 
An arrow in a hyperchain encapsulates the enzymatic influence of one species on the autocatalytic replication of another. 
We show that the network topology of a hyperchain, which captures all such enzymatic influences, is intimately related to the dynamical properties of the mass action system it generates. 
Dynamical properties such as existence, uniqueness and stability of a positive equilibrium as well as permanence, are determined by graph-theoretic properties such as existence of a spanning linear subgraph, being unrooted, being cyclic, and Hamiltonicity. 
 \vskip 0.04in
{\bf Keywords: dynamical systems, network theory, hypercycles, autocatalysis, origin-of-life models, Lotka-Volterra system} 
\end{abstract}

\section{Introduction}

We study the deterministic dynamics of {\em autocatalytic reaction networks}, where every reaction has the form $X_i + X_j \longrightarrow 2X_i +X_j$, where $i$ may or may not be different from $j$. 
The molecule $X_j$ is a catalyst for this reaction since its concentration is unaffected by the reaction. $X_i$ is catalyzed, in the sense that $X_i$ acts as a template for manufacture of a copy of itself.  A molecule can be a catalyst in one reaction and a template in another, or play both roles in the same reaction.
Such a network of {\em template-induced enzyme-catalyzed replication} was introduced by the Nobel laureate Manfred Eigen \cite{eigen1971selforganization} and further developed in a three part series of papers by Eigen and Schuster \cite{eigen1977emergence, eigen1978abstract, eigen1978realistic}; see also the collection in book form \cite{eigen1979hypercycle}.  Eigen and Schuster depicted the autocatalytic replication of $X_i$ via $X_j$, i.e. the reaction $X_i + X_j \longrightarrow 2X_i +X_j$, through the diagram $X_j \longrightarrow \cca{X_i}$. We preserve the principle of the notation, but for convenience replace it with $X_j \dashrightarrow X_i$, avoiding the solid arrow because it clashes with the arrow used to depict a reaction. The set of autocatalytic reactions proposed by Eigen had a cycle structure in the graph of catalytic influences, and was therefore dubbed a {\em hypercycle}. 

The motivation for introducing a hypercycle was to provide a mechanism to thwart the {\em error threshold} problem. This problem arises in {\em abiogenesis}, the origin of biotic life from prebiotic chemistry. For Darwinian natural selection to find purchase, there must be an information-carrying molecule that metabolizes and replicates faithfully over time. If there are frequent copying errors, the information will dissipate over time. In absence of error-correcting mechanisms, this sets a strict upper bound on the size of the molecule. Modern cells get around this size restriction by deploying enzymes that perform error correction. However, to produce these large error-correcting proteins, the coding molecule must have large size, which creates a classic chicken-and-egg dilemma. It appears that high fidelity replication of large molecules can only occur if large molecules that code for error-correction already exist. 
The path to producing large genomic molecules from small ones has not been elucidated. Hypercycles are a potential solution to this seeming paradox, since they are self-maintaining ecosystems that carry information in a robust manner. When there are competing hypercycles, even the slightest initial advantage for one of them becomes a winner-take-all situation, where only one connected motif survives in the long run. 
A hypercycle allows a diversity of molecule types to coexist and collaborate, effectively forming a molecular ecosystem, which is far more robust than a single molecule type, or a motley collection
of competing molecular species. It was speculated by Eigen that this system may have the ability to evolve and become more complex by allowing addition of new molecule types to the ecosystem. Since a community of molecules in a hypercycle can maintain and increase their information content by evolving, hypercycle may be a candidate for a solution to the error threshold problem. 

\begin{figure}[h!] 
 \begin{center}
   \begin{tikzpicture}[auto, every node/.style={scale=1}]
        \tikzstyle{block} = [draw, rectangle];
        \node [block] (cycle) {$\HH$ cyclic};
        	\node [block, right=11cm of cycle] (permanent) {$\HH$ permanent};
        \node [block, below=1.5cm of cycle] (hamiltonian) {$\HH$ Hamiltonian};
	\node [block, below=1.5cm of permanent] (perm sys) {$(\HH,K)$ permanent};
	\node [block, below left=0.6cm of perm sys] (persistent) {\specialcell{$(\HH,K)$ persistent and has \\ unique positive equilibrium}};
	\node [block, left=1.8cm of persistent] (irreducible) {\specialcell{$\HH$  strongly \\ connected}};
		\node [block, below=0.5cm of persistent] (stable) {\specialcell{$(\HH,K)$ has a linearly stable \\ positive equilibrium}};
	\node [block, below=0.8cm of irreducible] (property) {$(\HH,K)$ has property P};
	\node [block, below=3.7cm of hamiltonian] (lin sub) {\specialcell{$\HH$ has a  spanning \\ linear subgraph}};
	\node [block, below=1.8cm of lin sub] (unroot) {$\HH$ is unrooted};
	\node [block, below=3.6cm of perm sys] (uniq pos) {\specialcell{$(\HH,K)$ has unique \\ positive equilibrium}};
	\node [block, below=1.4cm of uniq pos] (pos) {\specialcell{$(\HH,K)$ has a \\ positive equilibrium}};
	
        
          \draw[<->,double] (cycle)--(permanent);
          \draw[->,double] (cycle)--(hamiltonian);
          \draw[->,double] (permanent)--(perm sys);
          \draw[->,double] (hamiltonian)--(perm sys);
          \draw[->,double] (hamiltonian)--(irreducible);
          \draw[->,double] (perm sys)--(persistent);
          \draw[->,double] (persistent)--(irreducible);
          \draw[->,double] (hamiltonian)--(lin sub);
          \draw[->,double] (lin sub)--(unroot);
          \draw[->,double] (perm sys)--(uniq pos);
          \draw[->,double] (uniq pos)--(pos);
          \draw[<->,double] (lin sub)--(uniq pos);
          \draw[<->,double] (unroot)--(pos);
          
           \draw[<->,double] (stable)--(property);
           \draw[->,double] (stable)--(uniq pos);
           \draw[->,double] (property)--(lin sub);
          
         
   \end{tikzpicture} 
 \end{center}
 \caption{
This table gives a summary of the most important results in this paper. A central goal of this paper is to connect on the one hand the graph-theoretic properties of a certain network, called the hyperchain and denoted by $\HH$, associated with a set of autocatalytic reactions and on the other hand the dynamical system $(\HH,K)$ arising from pairing with a choice of mass action kinetics $K$. The six boxes on the left are graph-theoretic properties of $\HH$, and the six boxes on the right are dynamical properties of $(\HH, K)$.
The symbol ``$\implies$'' indicates an implication.  
``$\HH$ is permanent'' is interpreted as ``$(\HH,K)$ is permanent for any $K > 0$'', while ``$(\HH,K)$ is permanent'' is interpreted as ``$(\HH,K)$ is permanent for some $K > 0$''. Property P is ``(i) $K$ is invertible, (ii) $(K^T)^{-1} \one > 0$, and (iii) $\diag((K^T)^{-1} \one) K^T$ has $n-1$ eigenvalues with negative real part.''}
 \label{fig:sum_implications}
 \end{figure}
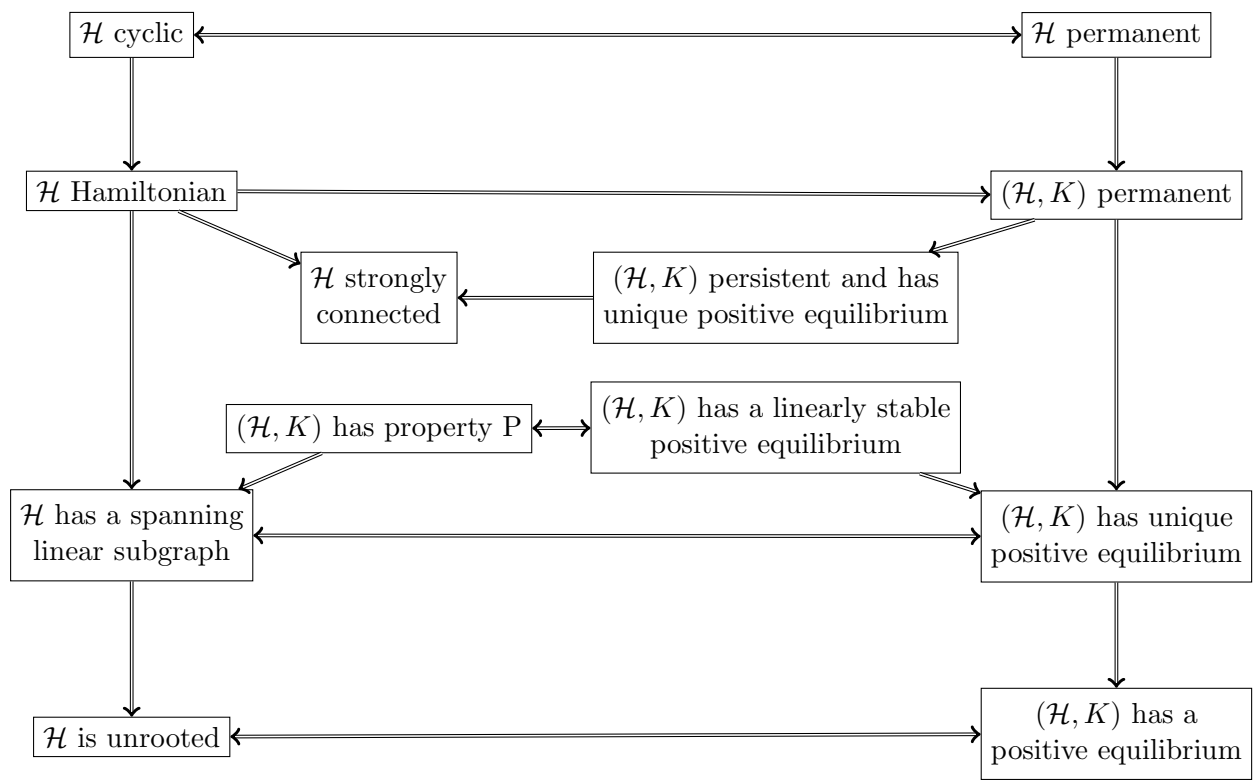


Hyperchains are a clearly defined set of networks, and include hypercycles as a subset. 
The original formulation of hypercycles involved ODEs and was studied extensively in \cite{hofbauer1988theory,schuster1978dynamical,hofbauer1980dynamical,schuster1979dynamical,hofbauer1981general,hofbauer1981competition,hofbauer1991stable,hofbauer2002competitive,hofbauer1998evolutionary}. 
The system of ODEs associated to hyperchains, referred to as replicator equations in \cite{hofbauer1988theory}, occurs not only in macromolecular evolution, but also in evolutionary game dynamics, and in Fisher-type genetic selection \cite{hofbauer1988theory}. Furthermore, Lotka-Volterra systems of mathematical ecology \cite{takeuchi1996global} are equivalent to hyperchain systems. It is shown in Theorem 7.5.1 of \cite{hofbauer1998evolutionary} that orbits of any Lotka-Volterra system are in a one-to-one correspondence with orbits of a hyperchain system after a coordinate transformation. 
Even though ``replicator equations'' is established terminology for the dynamical system associated to a hyperchain, we avoid its use in this article because we wish to emphasize the graph-theoretic properties of catalytic influences and how these affect the dynamical properties. In that sense, the nomenclature ``hyperchain'' is a natural successor to ``hypercycle''. 

Long-term survival of a community of molecules is aptly captured in the mathematical property of permanence. The classic hypercycle with $n \ge 2$ species is known to be permanent for {\em all} rate constants \cite{hofbauer1988theory}. When one considers other topologies of catalytic influences besides a cycle, some are found to be permanent while others are not, and the permanence property further depends on the choice of rate constants. 
We refer to the  generalization of hypercycle to arbitrary topologies of autocatalytic reactions of the form $X_i + X_j \to 2X_i + X_j$  by {\em hyperchain}. 
The fundamental mathematical questions we address here are: which hyperchains are permanent, and what are their dynamical properties. Besides permanence, we focus on existence, uniqueness, and stability of positive equilibria and of boundary equilibria. We give fairly general results that connect the network properties of the hyperchain with the dynamical properties of the mass action dynamical system generated by the hyperchain. 

We now give a highlights tour of the main mathematical results in this article. 
Several of these results, among others, are summarized in the network of implications in Figure \ref{fig:sum_implications}.
We establish that all hyperchain systems have unbounded growth, and a hyperchain system does not have a finite-time blow-up if and only if it is acyclic (Theorem \ref{thm:fintimeblowup}). Moving on to the dynamics of relative concentrations, we prove the converse of the well-known result related to permanence of hypercycles. We show that if a hyperchain is permanent for all rate constants, then it must be a hypercycle (Theorem \ref{thm:perm_then_cycle}). While the Hamiltonian property is not necessary for permanence of a hyperchain system, we show that it is sufficient (Theorem \ref{thm:hamilton_perm}). A hyperchain has a  spanning linear subgraph if and only if there exists a dynamical system generated by the hyperchain which has a unique positive equilibrium (Theorem \ref{thm:uniqueposeq}). 

This article is organized as follows.  
In Section \ref{sec:networktopology}, we discuss the network properties of a hyperchain and establish the basic notation. In Section \ref{sec:dynamics}, we discuss the dynamics of the system of ODEs generated by applying mass action kinetics to a hyperchain. In Section \ref{sec:relativeconcentration}, we discuss the dynamical system of relative concentrations (relative to the total concentration) of species, and from hereon discuss only the relative concentration system. In Section \ref{sec:dynamics_relconc}, we establish the network conditions for existence, uniqueness, and stability of equilibria of a hyperchain system. Finally, in Section \ref{sec:permanence}, we discuss the network conditions for permanence of a hyperchain system.


\section{Network Topology of a Hyperchain} \label{sec:networktopology}

We consider a reaction network with species $\{X_1, \ldots, X_n\}$
 where every reaction is of the form $X_i + X_j \to 2X_i +X_j$.
Each reaction models  {\em template-induced enzyme-catalyzed replication}, where $X_i$ is the template or the replicating species, and $X_j$ is the enzyme. A species can appear as a replicator in one reaction and as an enzyme in another. The {\em catalytic influence diagram} $X_j \dashrightarrow {X_i}$, which represents the reaction $X_i + X_j \to 2X_i +X_j$, depicts the catalytic species at the tail of a dashed arrow and the replicating species at the head of the same arrow. 
Any network containing only reactions of type $X_i + X_j \to 2X_i +X_j$ 
can be mapped in a one-to-one manner to a {\em catalytic influence network (CIN)} by mapping each reaction in the network to its catalytic influence diagram and then taking the union of such diagrams. We emphasize that this is a one-to-one mapping and the mapping can be reversed for any given CIN to obtain a unique reaction network. We refer to the CIN thus obtained as a {\em hyperchain}. 

%
\begin{definition}
Let $V = \{X_1, \ldots, X_n\}$ be a finite, nonempty set and let $D$ be a nonempty subset of $V \times V$
with the property that for every $X_i \in V$ there is an $X_j \in V$ such that either $X_i \dasharrow X_j \in D$ or $X_j \dasharrow X_i \in D$. We will refer to the directed graph $\HH=(V,D)$ as a {\em hyperchain on $\{X_1, \ldots, X_n\}$} or as a {\em hyperchain on $n$ species}. 
\end{definition}
Unless mentioned otherwise, we will assume that the set of species (or set of vertices) of the hyperchain is $\{X_1,\ldots, X_n\}$. From hereon, we suppress explicit mention of the sets $V$ and $D$ in the description of a hyperchain $\HH = (V,D)$, and using a slight abuse of notation we say $X_i \dasharrow X_j \in \HH$ to mean $X_i \dasharrow X_j \in D$.


Consider a hyperchain on 6 species with cyclic catalytic influence, as depicted in \eqref{fig:hyp6}. We will refer to a hyperchain on $n$ species with a single cycle as a {\em $n$-hypercycle}, or simply as {\em hypercycle}, when the statement is about arbitrary number of species. The name hypercycle was coined by Eigen in seminal work on the origin of biological macromolecules \cite{eigen1971selforganization}. In a hypercycle, every species is both a replicator and an enzyme for exactly one other species and furthermore, every species directly or indirectly (i.e. possibly through a sequence of arrows) aids the replication of every other species. This {\em pay-it-forward} form of cooperation within a single community of replicators is responsible for some very robust dynamical properties. In fact, the goal of this paper is to establish that there is an intimate connection between the graph-theoretic properties of a hyperchain on the one hand, and the dynamical properties of the mass-action system for the same network on the other hand. 
\begin{equation} \label{fig:hyp6}
  \begin{tikzpicture}[baseline={(current bounding box.center)}, scale=1.2]
   \node[state] (1)  at (cos 180,sin 180)  {${X_1}$};
   \node[state] (2)  at (cos 120,sin 120)  {${X_2}$};
   \node[state] (3)  at (cos 60,sin 60)  {${X_3}$};
   \node[state] (4)  at (cos 0,sin 0)  {${X_4}$};
   \node[state] (5)  at (cos -60,sin -60)  {${X_5}$};
   \node[state] (6)  at (cos -120,sin -120)  {${X_6}$};

   \path[->,dashed]
    (1) edge[] node {} (2)
   
     (2) edge[] node {} (3)
     
    (3) edge[] node {} (4)
   
     (4) edge[] node {} (5)
     
     (5) edge[] node {} (6)
     
         (6) edge[] node {} (1)
     
;
  \end{tikzpicture}
 \end{equation}
 
Consider a hyperchain $\HH$ on species $\{X_1, \ldots, X_n\}$. Let $x_i$ represent the time-dependent concentration of the species $X_i$. We will assume mass-action kinetics throughout, so that the rate of the reaction $X_i + X_j \to 2X_i +X_j$ is proportional to the product of the concentrations of the reactants, i.e. to $x_i x_j$. The rate of proportionality is the mass-action reaction rate constant, denoted by $k_{ji}$. When referring to the mass action system, we will use the reaction rate constant as a label on the corresponding edge in the hyperchain, as depicted below.
\begin{equation*}
X_j \stackrel{k_{ji}}{\dashrightarrow} {X_i} 
\end{equation*}
Thus the mass-action system of $\HH$ is specified via a labeled directed graph, where the labels (or weights) are positive constants. For convenience in writing some mathematical expressions, we allow the weights to be zero, i.e. $k_{ji}$ is equal to zero if and only if $X_j \dashrightarrow X_i$ is not an edge in the hyperchain. With this convention, the mass-action ODE system governing the dynamics of a hyperchain is
\begin{equation} \label{eq:absconcode}
\dot x_j = x_j \sum_{i=1}^n k_{ij} x_i = x_j f_j(x) \quad \quad (1\le j \le n),
\end{equation}
where $f_j(x) := \sum_{i=1}^n k_{ij} x_i$. In vector notation, $\dot x = x \ast f(x)$ where $\ast$ denotes the termwise product of vectors. 


Consider a hyperchain $\HH$ on $\{X_1,\ldots,X_n\}$ taken with mass action kinetics. Let $A(\HH)$ denote the adjacency matrix of $\HH$, i.e. 
\begin{align*}
(A(\HH))_{i,j} = \begin{cases}
1 & \mbox{ if } X_i \dasharrow X_j \in \HH \\
0 & \mbox{ if } X_i \dasharrow X_j \notin \HH. 
\end{cases}
\end{align*}
We define a matrix $K \in \R^{n \times n}_{\ge 0}$ whose entries are the mass action rate constants,  via
\begin{align*}
(K)_{i,j} = \begin{cases}
k_{ij} > 0 & \mbox{ if } X_i \dasharrow X_j \in \HH \\
0 & \mbox{ if } X_i \dasharrow X_j \notin \HH. 
\end{cases}
\end{align*}
Let $(\HH,K)$ be a labeled, directed graph, where the labels on $\HH$ are provided by $K$. Often, we need to consider a {\em sub-hyperchain} $\HH'$, i.e. a hyperchain $\HH'$ that is a subgraph of a hyperchain $\HH$. Define the restriction $K |_{\HH'} =  K \ast A(\HH')$, where $\ast$ denotes the entrywise product of matrices. When $\HH'$ is a sub-hyperchain of a hyperchain $\HH$, we write $(\HH',K)$ as an abbreviation of $(\HH',K |_{\HH'})$.

For $x = (x_1,\ldots,x_n)^T$, the mass action dynamical system \eqref{eq:absconcode} in matrix notation is 
\begin{equation}
\dot x = x \ast (K^T x).
\end{equation}
By $x$ is positive (nonnegative), we mean that $x \in \R^n_{> 0}$ ($x \in \R^n_{\ge 0}$). 
\section{Dynamics of a Hyperchain} \label{sec:dynamics}
 We refer to a node in the hyperchain as a {\em terminal node} if it has no outgoing edges, and as an {\em initial node} if it has no incoming edges. A self-edge of the form $X_i \dashrightarrow X_i$ is considered both incoming and outgoing for $X_i$. Two dynamical systems $\dot x = f(x)$ and $\dot x = g(x)$ for $x \in \R^n$ are said to be {\em diagonally conjugate} if there exists an $s \in \R^n_{>0}$ such that $g(s \ast x) = s \ast f(x)$. Diagonal conjugacy, as defined here, is a special form of topological conjugacy. The trajectories of diagonally conjugate systems are in one-to-one correspondence, as shown by the following Lemma.
 \begin{lemma}
 Consider the dynamical systems $\dot x = f(x)$ and $\dot x = g(x)$ for $x \in \R_{\ge 0}^n$ with the property that there is an $s \in \R^n_{>0}$ such that $g(s \ast x) = s \ast f(x)$. Then, 
 $\{x(t) : t \in \R\}$ is an orbit of  $\dot x = f(x)$ if and only if $\{s \ast x(t) : t \in \R\}$ is an orbit of  $\dot y = g(y)$. 
 \end{lemma}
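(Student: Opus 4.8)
The plan is to exhibit the map $\Phi \colon \R^n_{\ge 0} \to \R^n_{\ge 0}$, $\Phi(x) = s \ast x$, as a conjugacy that carries solution curves of $\dot x = f(x)$ to solution curves of $\dot y = g(y)$ \emph{without} reparametrizing time, and then to observe that $\Phi$ is a bijection, which upgrades the one-directional statement to the stated equivalence. First I would record that, since $s \in \R^n_{>0}$, the map $\Phi$ is a linear bijection of $\R^n_{\ge 0}$ onto itself, with inverse $\Phi^{-1}(y) = s^{-1} \ast y$, where $s^{-1} := (1/s_1,\ldots,1/s_n)^T$; in particular $\Phi$ and $\Phi^{-1}$ map $\R^n_{\ge 0}$ into $\R^n_{\ge 0}$, so no trajectory is pushed out of the domain.

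Next, suppose $x(t)$ is a solution of $\dot x = f(x)$ on its (maximal) interval of existence and set $y(t) := s \ast x(t) = \Phi(x(t))$. Differentiating componentwise and using that $\ast$ is linear in the second argument, $\dot y(t) = s \ast \dot x(t) = s \ast f(x(t))$. By the hypothesis $g(s \ast x) = s \ast f(x)$ applied at $x = x(t)$, the right-hand side equals $g(s \ast x(t)) = g(y(t))$, so $y(\cdot)$ solves $\dot y = g(y)$; since $\Phi$ is a homeomorphism, $y(\cdot)$ is defined on the same interval and cannot be extended further than $x(\cdot)$ can. Hence $\{s \ast x(t) : t\} = \Phi(\{x(t):t\})$ is an orbit of $\dot y = g(y)$, giving the forward implication.

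For the converse, I would derive the mirrored identity: substituting $x = s^{-1}\ast y$ into $g(s\ast x) = s\ast f(x)$ yields $g(y) = s \ast f(s^{-1}\ast y)$, i.e. $f(\Phi^{-1}(y)) = \Phi^{-1}(g(y))$. This is exactly the hypothesis of the lemma with $f,g$ swapped and $s$ replaced by $s^{-1}$, so the argument of the previous paragraph, applied verbatim to $\Phi^{-1}$, shows that if $\{y(t):t\}$ is an orbit of $\dot y = g(y)$ then $\{s^{-1}\ast y(t):t\}$ is an orbit of $\dot x = f(x)$; taking $x(t) := s^{-1}\ast y(t)$ we get $\{s\ast x(t):t\} = \{y(t):t\}$, which closes the equivalence.

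I do not anticipate a real obstacle here: the content is the chain-rule computation plus the algebraic manipulation that the defining relation is symmetric under $(f,g,s)\mapsto(g,f,s^{-1})$. The only points requiring a word of care are that $\Phi$ preserves $\R^n_{\ge 0}$ (immediate from $s>0$) and that, because no time rescaling is involved, maximal intervals of existence of corresponding solutions coincide, so ``orbit'' on one side genuinely matches ``orbit'' on the other.
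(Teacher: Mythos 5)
Your proposal is correct and follows essentially the same route as the paper: differentiate $s \ast x(t)$, apply the hypothesis $g(s \ast x) = s \ast f(x)$, and reduce the converse to the forward direction via the symmetry $(f,g,s)\mapsto(g,f,s^{\ast -1})$. The extra remarks about preservation of $\R^n_{\ge 0}$ and matching maximal intervals are fine but not needed beyond what the paper records.
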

\begin{proof}
Let $s = (s_1, \ldots, s_n) \in \R^n_{>0}$ such that $g(s \ast x) = s \ast f(x)$. Define $s^{\ast -1} = (1/s_1, \ldots, 1/s_n)  \in \R^n_{>0}$. If we let $y := s \ast x$, then $s^{\ast -1} \ast g(y) = f(s^{\ast -1} \ast y)$. So it suffices to prove one direction of the implication. Now, let $\{x(t) : t \in \R\}$ be an orbit of  $\dot x = f(x)$. Then $\dv{}{t} \left(s \ast x(t)\right) = s \ast \dot x = s \ast f(x) = g(s \ast x)$, so that $\{s \ast x(t) : t \in \R\}$ is an orbit of  $\dot y = g(y)$.
\end{proof}
For any given hyperchain system, we can simplify the analysis by considering a non-dimensionalized system, wherein we set several of the rate constants equal to 1, without loss in generality. 
\begin{proposition} \label{prop:nondim}
Let $\HH$ be a hyperchain on $n$ species and $K \in \R^{n \times n}_{\ge 0}$ a matrix of mass action reaction rate constants. There exists a $K_0 \in \R^{n \times n}_{\ge 0}$, with the property that every non-terminal node in the hyperchain has at least one outgoing edge with weight 1, and such that the mass action systems under $K$ and $K_0$ are diagonally conjugate. 
\end{proposition}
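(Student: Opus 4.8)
The plan is to obtain $K_0$ from $K$ by a single diagonal rescaling of the concentration coordinates and to read off the required normalization directly from the effect of that rescaling on the edge weights. First I would record how the hyperchain vector field transforms under the substitution $y = s \ast x$ for a fixed $s \in \R^n_{>0}$. Writing the system as $\dot x = x \ast (K^T x)$ and using $x = s^{\ast -1} \ast y$, a one-line computation gives $\dot y_j = y_j \sum_{i=1}^n (k_{ij}/s_i)\, y_i$ for each $j$; that is, $\dot y = y \ast (K_0^T y)$ with $K_0 := D K$, $D := \diag(1/s_1, \ldots, 1/s_n)$, so $(K_0)_{ij} = k_{ij}/s_i$. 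Equivalently, with $f(x) := x \ast (K^T x)$ and $g(y) := y \ast (K_0^T y)$ one verifies the identity $g(s \ast x) = s \ast f(x)$ (it reduces to $K_0^T(s \ast x) = K^T x$, which holds entrywise by the choice of $K_0$). Hence for \emph{any} $s \in \R^n_{>0}$ the mass action systems of $(\HH,K)$ and $(\HH,K_0)$ are diagonally conjugate, and by the preceding Lemma their orbits are in one-to-one correspondence. Note also that $K_0$ has the same zero pattern as $K$, since $s$ is positive, so $(\HH, K_0)$ is a legitimate mass action system on the same hyperchain $\HH$, and $K_0 \in \R^{n\times n}_{\ge 0}$.

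It then remains to choose $s$ so that every non-terminal node acquires an outgoing edge of weight $1$. Since passing from $K$ to $K_0 = DK$ multiplies the $i$-th row of $K$ — equivalently, the weights $k_{ij}$ of all edges $X_i \dasharrow X_j$ leaving $X_i$ — by $1/s_i$, I proceed node by node. For each non-terminal node $X_i$, the hyperchain structure guarantees at least one edge $X_i \dasharrow X_{j(i)} \in \HH$; fix such a $j(i)$ and set $s_i := k_{i,j(i)} = (K)_{i, j(i)} > 0$. For each terminal node $X_i$ (whose $i$-th row of $K$ is identically zero) set $s_i := 1$. Then $s \in \R^n_{>0}$, so the conjugacy above applies, and by construction $(K_0)_{i, j(i)} = k_{i,j(i)}/s_i = 1$ for every non-terminal $X_i$, which is exactly the asserted property.

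I do not expect a genuine obstacle here: the content is a single diagonal change of variables. The only points that need a little care are (i) getting the transpose/diagonal bookkeeping right so that the rescaling acts on the rows of $K$, i.e. on the outgoing edges of each node, rather than on the columns, and (ii) making sure $s$ is a well-defined positive vector even in the presence of terminal nodes — which is why those coordinates are assigned the value $1$ separately rather than via an outgoing edge. One could instead phrase the whole argument as an application of the preceding Lemma to the map $x \mapsto s \ast x$, but carrying along the explicit formula $K_0 = DK$ makes the normalization transparent.
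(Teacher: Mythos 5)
Your proof is correct and follows essentially the same route as the paper: a diagonal change of variables $y = s \ast x$ with $s_i$ equal to the rate constant of a chosen outgoing edge at each non-terminal node (and $s_i = 1$ at terminal nodes), yielding $K_0 = \diag(1/s_1,\ldots,1/s_n)\,K$ with the selected edges normalized to weight $1$. The paper writes the same rescaling componentwise as $\wt k_{ij} = k_{ij}k_{ii_s}^{-1}$ and verifies the conjugacy by the same one-line computation.
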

\begin{proof}
For $x \in \R^n_{\ge 0}$, let $\dot x = x \ast \left(K^T x\right)$ be a mass action system under $K$, where $K$ is the matrix whose $(i,j)$ entry is $k_{ij}$, so that for $1 \le i \le n$, $\dot x_i = x_i \sum_{j=1}^n k_{ji} x_j$. 
For every node $X_i$ that is not terminal, fix an outgoing edge, and denote the rate constant for this edge by $k_{ii_s}$.  For all $i, j \in \{1,\ldots, n\}$, define $\wt k_{ij} := k_{ij} k_{i i_s}^{-1}$ and let $K_0$ be the matrix whose $(i,j)$ entry is $\wt k_{ij}$.

Define $s \in \R^n_{>0}$ as follows:
\begin{equation*}
s_i = \begin{cases} k_{ii_s} & \mbox{ if $X_i$ is not terminal,} \\ 1 & \mbox{ if $X_i$ is terminal}. \end{cases}
\end{equation*}
It follows that
\begin{equation} \label{calcsss}
\dv{}{t} \left(s_i x_i(t)\right) = s_i \dot x_i = s_i x_i \sum_{j=1}^n k_{ji} x_j = s_i x_i \sum_{j=1}^n k_{ji} k_{jj_s}^{-1} k_{jj_s} x_j  = s_i x_i \sum_{j=1}^n \wt k_{ji} s_j x_j.
\end{equation}
Clearly, this defines a mass action system on the hyperchain $\HH$, where the concentration of the species $X_i$ is $s_i x_i$. Furthermore, \eqref{calcsss} shows that the mass action systems under $K$ and $K_0$ are diagonally conjugate. Finally, for every non-terminal node $X_i$, $\wt k_{ii_s} = 1$, by definition. 
\end{proof}

\begin{proposition}
Consider a hyperchain $\HH$ on $n$ species. Then, $x(t) = (x_1(t), \ldots x_n(t)) \to \infty$, in finite or infinite time, for every initial condition $x(0) \in \R^{n}_{> 0}$. 
\end{proposition}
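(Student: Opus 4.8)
The plan is to use the fact that the right-hand side of \eqref{eq:absconcode} is nonnegative on the nonnegative orthant, so that each coordinate is nondecreasing along trajectories, and then to exploit a single edge of $\HH$ to force at least exponential growth of one coordinate.

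First I would note that since every $k_{ij} \ge 0$, we have $f_j(x) = \sum_{i=1}^n k_{ij} x_i \ge 0$ for all $x \in \R^n_{\ge 0}$ and all $j$, hence $\dot x_j = x_j f_j(x) \ge 0$ along any solution starting in $\R^n_{\ge 0}$. In particular $\R^n_{>0}$ is forward invariant, and for $x(0) \in \R^n_{>0}$ each coordinate satisfies $x_j(t) \ge x_j(0) > 0$ throughout the maximal forward interval of existence $[0,T)$. Set $m := \min_{1 \le j \le n} x_j(0) > 0$.

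Since a hyperchain has a nonempty edge set by definition, fix an edge $X_a \dashrightarrow X_b \in \HH$, so that $k_{ab} > 0$. Then, on $[0,T)$, $\dot x_b = x_b \sum_{i=1}^n k_{ib} x_i \ge k_{ab}\, x_a\, x_b \ge k_{ab}\, m\, x_b$, and integrating this differential inequality gives $x_b(t) \ge x_b(0)\, e^{k_{ab} m t}$ (the case $a=b$ of a self-edge is subsumed, giving $\dot x_b \ge k_{bb} m\, x_b$ as well).

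It remains to split on $T$. If $T = \infty$, the bound just obtained shows $x_b(t) \to \infty$ as $t \to \infty$, so $x(t) \to \infty$. If $T < \infty$, then since the vector field in \eqref{eq:absconcode} is polynomial, hence smooth, the solution cannot remain in a compact subset of $\R^n$ as $t \to T^-$; because moreover each coordinate is nondecreasing, the sum $\sum_{j=1}^n x_j(t)$ is nondecreasing, so the fact that its limsup is infinite upgrades to $\sum_{j=1}^n x_j(t) \to \infty$ as $t \to T^-$. In either case $x(t) \to \infty$ in finite or infinite time. The only step requiring any care is this last one: invoking the standard escape-from-compact-sets lemma for ODEs with smooth right-hand side, and observing that coordinatewise monotonicity turns the resulting $\limsup = \infty$ into a genuine limit; everything else is immediate.
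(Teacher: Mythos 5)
Your proof is correct and follows essentially the same route as the paper's: both observe that nonnegativity of the vector field makes every coordinate nondecreasing, hence bounded below by its initial value, and then use a single edge (equivalently, a non-initial node) to obtain a differential inequality forcing at least exponential growth of one coordinate. Your explicit treatment of the finite-time case via the escape-from-compact-sets lemma is a slight refinement of the paper's terser ``in either finite or infinite time,'' but it is not a different argument.
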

\begin{proof}
From \eqref{eq:absconcode}, for all $j \in \{1,\ldots, n\}$, $\dot x_j = x_j \sum_{i=1}^n k_{ij} x_i \ge 0$, so that $x_j(t)$ is a nondecreasing function of $t$. In particular, $x_j(t) \ge x_j(0) > 0$ for all $t \in \R_{\ge 0}$. But then $\dot x_j \ge x_j \sum_{i=1}^n k_{ij} x_i(0)$. 
By definition, $\HH$ has at least one node that is not initial, because otherwise $\HH$ has no directed edge. Suppose that $X_j$ is non-initial in $\HH$, so there is a $k_{ij} > 0$ for some $i \in \{1,\ldots,n\}$. In particular, $\dot x_j \to \infty$ in either finite or infinite time. 
\end{proof}

Positive feedback is necessary and sufficient for finite-time blow-up in hyperchain systems, as the next results show. 
\begin{lemma} \label{lem:int-ln}
Let $\lambda: \R_{\ge 0} \to \R_{> 0}$ be a continuous, non-decreasing function such that $\lambda(t) \xrightarrow{t \to \infty} \infty$. Then the set 
\[
S := \left\{ t \ge 0 : \int_{0}^t \lambda(s) ds \le \ln \lambda(t) \right \}
\]
has finite measure.
\end{lemma}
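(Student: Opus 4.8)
The plan is to show that $S$ is contained in a bounded interval, which immediately implies it has finite measure. First I would observe that since $\lambda$ is non-decreasing and tends to $\infty$, there is a time $t_0 \ge 0$ with $\lambda(t) \ge 2$ for all $t \ge t_0$; by enlarging $t_0$ I may also assume $\int_0^{t_0}\lambda(s)\,ds \ge 1$. The idea is that for $t \ge t_0$ the integral $\int_0^t \lambda(s)\,ds$ grows at least linearly with slope bounded below (because $\lambda$ is bounded below by a positive constant there), whereas $\ln\lambda(t)$ grows much more slowly. The subtlety is that $\lambda$ could grow arbitrarily fast, so I cannot get a uniform bound on $\ln\lambda(t)$ directly; instead I must exploit that the same fast growth of $\lambda$ makes the left-hand integral grow even faster.

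Concretely, for $t \ge t_0$ split $\int_0^t \lambda(s)\,ds = \int_0^{t_0}\lambda(s)\,ds + \int_{t_0}^t \lambda(s)\,ds$. On $[t_0,t]$, monotonicity gives $\int_{t_0}^{t}\lambda(s)\,ds \ge \lambda(t_0)(t-t_0) \ge 0$, but more usefully, since $\lambda$ is non-decreasing, for any fixed positive length I can compare $\lambda$ on the last unit subinterval. The cleanest route: I claim that for $u \ge 1$ one has $\ln u \le u - 1 \le u/2$ once $u \ge 2$, more precisely $\ln u \le u/e \le u$ always for $u > 0$, and in fact $\ln u \le u/2$ for $u$ bounded below, but I want something that also beats a large additive constant. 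So I would instead argue: for $t \ge t_0 + 1$, monotonicity gives $\int_{t-1}^t \lambda(s)\,ds \ge \lambda(t-1)$, hence
\begin{equation*}
\int_0^t \lambda(s)\,ds \;\ge\; \int_0^{t_0}\lambda(s)\,ds + \lambda(t-1) \;\ge\; 1 + \lambda(t-1).
\end{equation*}
Now I need $1 + \lambda(t-1) > \ln\lambda(t)$. This is not quite immediate because $\lambda(t)$ can be much larger than $\lambda(t-1)$. The fix is to use the whole integral rather than a single unit step: for $t \ge t_0$, writing $m = \lfloor t - t_0\rfloor$ and using monotonicity on each of the unit intervals $[t_0 + k, t_0 + k + 1]$ for $k = 0,\dots,m-1$, together with the last partial interval, gives $\int_{t_0}^t \lambda(s)\,ds \ge \sum_{k=1}^{m}\lambda(t_0 + k)$, and in particular this is at least $\lfloor t-t_0\rfloor \cdot \lambda(t_0) \ge \lfloor t - t_0\rfloor$. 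But that only gives linear growth, which still loses to $\ln\lambda(t)$ if $\lambda$ grows like a tower function.

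The genuinely correct argument, and the step I expect to be the main obstacle, is to bound $\ln\lambda(t)$ by the integral itself. Here is the key inequality: since $\lambda$ is non-decreasing,
\begin{equation*}
\int_0^t \lambda(s)\,ds \;\ge\; \int_{t-1}^t \lambda(s)\,ds \;\ge\; \lambda(t-1),
\end{equation*}
so it suffices to handle the gap between $\lambda(t-1)$ and $\ln\lambda(t)$. If $\lambda(t) \le e^{\lambda(t-1)+1}$ we are done since then $\ln\lambda(t) \le \lambda(t-1)+1 \le \int_0^t\lambda + 1$, and one more additive unit is absorbed by the $\int_0^{t_0}\lambda \ge 1$ term, giving strict inequality $\int_0^t\lambda(s)\,ds > \ln\lambda(t)$, i.e. $t \notin S$. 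If on the other hand $\lambda(t) > e^{\lambda(t-1)+1}$, then on the interval $[t-1, t]$ the function $\lambda$ jumps from $\lambda(t-1)$ up past $e^{\lambda(t-1)+1}$; but I don't directly control the integral over $[t-1,t]$ from above, I control it from below by $\lambda(t-1)$ only. To close this case I instead push the comparison one interval further out: replace $t$ by using the interval $[t-1,t]$ and note $\int_0^t \lambda \ge \int_0^{t-1}\lambda$, and apply the first case inductively/recursively at $t-1$. Since $t - t_0$ is finite, after finitely many steps I reach a point where the "easy" case holds, but I must be careful this recursion terminates with a uniform conclusion. A cleaner packaging: define $g(t) = \int_0^t\lambda(s)\,ds - \ln\lambda(t)$; I will show $g(t) \to \infty$, which makes $S = \{g \le 0\}$ bounded (hence of finite measure, being also closed by continuity of $g$ where $\lambda$ is continuous). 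To see $g(t)\to\infty$: fix any $M > 0$; choose $t_1$ with $\lambda(t_1) \ge 1$ and $\int_0^{t_1}\lambda \ge M + 1$ (possible since $\int_0^\infty \lambda = \infty$ as $\lambda$ is bounded below by a positive constant eventually). For $t \ge t_1$, $\int_0^t \lambda \ge \int_0^{t_1}\lambda + \int_{t_1}^t \lambda \ge M + 1 + \lambda(t_1)(t - t_1)$; and I claim $\ln\lambda(t) \le 1 + \int_{t_1}^t \lambda(s)\,ds$ — indeed if $\lambda(t) \le e$ this is trivial, and if $\lambda(t) > e$ then using $\ln u \le u$ for all $u>0$ applied on $[t-\delta,t]$...

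Actually the slick finish uses $\ln u \le u$: pick $\delta \in (0,1]$ small; by continuity and monotonicity, for $t$ large, $\int_{t-\delta}^{t}\lambda(s)\,ds \ge \delta\,\lambda(t-\delta)$, and separately $\ln\lambda(t) - \ln\lambda(t-\delta) \le \ln\lambda(t)$. The termination issue is the real content, and I expect that to be where the author's argument concentrates its care; my plan is to handle it by the recursion on unit intervals described above, noting it terminates because the number of unit steps from $t$ back to $t_0$ is at most $\lceil t - t_0\rceil$ and each step either concludes or strictly decreases the argument by $1$ while never increasing the relevant integral, so after at most $\lceil t - t_0 \rceil$ steps we land in the base case $\lambda(\cdot) \le e$, from which $\int_0^{t_0}\lambda \ge 1$ yields the strict inequality placing $t$ outside $S$ — hence $S \subseteq [0, t_0 + 1)$, which has finite measure.
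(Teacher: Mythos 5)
Your strategy is built on showing that $S$ is contained in a bounded interval, equivalently that $g(t) := \int_0^t \lambda(s)\,ds - \ln\lambda(t) \to \infty$. This cannot work, because that intermediate claim is false: $S$ need not be bounded, and the lemma deliberately asserts only finite measure. Here is a counterexample to your claim. Let $\lambda \equiv 1$ on $[0,T_1]$ with $T_1=1$, and inductively let $\lambda$ rise continuously (say linearly) on a short interval $[T_k, T_k+\ep_k]$ from its previous value up to $M_k := \exp\left(2\int_0^{T_k}\lambda(s)\,ds\right)$, where $\ep_k := \left(\int_0^{T_k}\lambda(s)\,ds\right)/M_k$, then stay constant equal to $M_k$ until $T_{k+1}:=T_k+1$. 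This $\lambda$ is continuous, positive, non-decreasing and tends to infinity; yet at $t = T_k+\ep_k$ one has $\int_0^t\lambda \le \int_0^{T_k}\lambda + M_k\ep_k = 2\int_0^{T_k}\lambda = \ln M_k = \ln\lambda(t)$, so $t\in S$ for every $k$ and $S$ is unbounded. (It still has finite measure because each excursion into $S$ lasts only about $\ln M_k/M_k$, which is summable --- this is exactly the phenomenon the lemma is really about.) The same example shows your recursion for the ``hard case'' $\lambda(t) > e^{\lambda(t-1)+1}$ cannot close: recursing to $t-1$ at best decides whether $t-1\in S$, which says nothing about $\int_0^t\lambda$ versus $\ln\lambda(t)$ when $\lambda(t)$ is astronomically larger than $\lambda(t-1)$; moreover your claimed base case ``$\lambda(\cdot)\le e$'' is never reached for large $t$, since $\lambda$ is non-decreasing and already $\ge 2$ past $t_0$.

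The missing idea is a mechanism that controls the \emph{measure} of $S$ rather than its location. The paper does this in two steps: first it shows that if the inequality $\int_0^t\mu \le \ln\mu(t)$ held for \emph{all} $t$, then the waiting times between the successive level crossings $\mu = 2^i\mu_0$ satisfy $t_{i+1}-t_i \le \ln(2^{i+1}\mu_0)/(2^i\mu_0)$, a summable sequence, forcing finite-time blow-up and hence a contradiction with $\mu$ being defined on all of $\R_{>0}$; second, assuming $S$ has infinite measure, it builds the time change $\varphi(t)=\inf\{t'\ge 0: \int_0^{t'}\chi_S(s)\,ds>t\}$ that deletes the complement of $S$, and checks that $\wt\lambda=\lambda\circ\varphi$ satisfies the inequality everywhere (discarding the integral over $S^c$ only decreases the left side, since $\lambda>0$), reducing to the first step. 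Any repair of your argument would need to replace ``$S$ is bounded'' with an estimate of this summable-excursions type.
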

\begin{proof}
Suppose $\mu: \R_{> 0} \to \R_{> 0}$ is a non-decreasing function such that $\mu(t) \xrightarrow{t \to \infty} \infty$ and $\int_{0}^t \mu(s) ds \le \ln \mu(t)$ for all $t > 0$. For $t_0 \in (0, \infty)$, let $\mu_0 = \mu(t_0)$. Since $\mu(t)$ is non-decreasing and $\mu(t) \xrightarrow{t \to \infty} \infty$, for $i \in \Z_{\ge 0}$, $\{ t' : \mu(t') \ge 2^i \mu_0\}$ is nonempty. Let $t_i := \inf \{ t': \mu(t') \ge 2^i \mu_0\}$  so that $0 < t_0 \le t_1 \le t_2 \le \ldots$ is a non-decreasing sequence and for all $\ep > 0$ and $i \in \Z_{>0}$, $\mu(t_i - \ep) < 2^i \mu_0 \le \mu(t_i)$. 
Let $\ep >0$. For all $i \in \Z_{\ge 0}$, 
\[
\ln(2^{i+1} \mu_0) > \ln (\mu(t_{i+1} - \ep)) \ge \int_{0}^{t_{i+1} -\ep} \mu(s) ds  \ge \int_{t_i}^{t_{i+1} -\ep} \mu(s) ds \ge \mu(t_i) \left(t_{i+1} - t_i -\ep \right), 
\]
Since $\ep$ is arbitrary, 
\[
t_{i+1} - t_i \le \frac{\ln(2^{i+1} \mu_0)}{\mu(t_i)} \le \frac{\ln(2^{i+1} \mu_0)}{2^i\mu_0}.
\] 
So for any $k \in \Z_{\ge 0}$, 
\[
t_k = t_0 + \sum_{i=0}^k \left( t_{i+1} - t_i \right) \le  t_0 + \sum_{i=0}^k \frac{\ln(2^{i+1} \mu_0)}{2^i\mu_0} <  t_0 + \sum_{i=0}^\infty \frac{\ln(2^{i+1} \mu_0)}{2^i\mu_0} = t_0 + \frac{2 \ln(4 \mu_0)}{\mu_0}
\]
This implies that $\mu(t)$ goes to infinity in finite time, which contradicts the assumption that $\mu(t)$ is defined for all positive reals. 

Now, suppose that $\lambda: \R_{\ge 0} \to \R_{> 0}$ is a continuous, non-decreasing function such that $\lambda(t) \xrightarrow{t \to \infty} \infty$ and $S := \{ t \ge 0: \int_{0}^t \lambda(s) ds \le \ln \lambda(t) \}$ has infinite measure. By continuity of $\lambda$, $S$ is a union of closed intervals, possibly including some degenerate intervals comprising of a single point. So, we may write $S = \bigcup_{\alpha \in I} [a_\alpha,b_\alpha]$ for a totally ordered index set $I$, where the total ordering on $I$ is defined as follows: for all $\alpha, \alpha' \in I$, $\alpha < \alpha'$ if and only if $a_{\alpha} < a_{\alpha'}$. 

Let $\chi_S: \R_{\ge 0} \to \{0,1\}$ be the characteristic function of the set $S$, i.e. $\chi_S(t) = 1$ if $t \in S$ and $\chi_S(t) = 0$ if $t \notin S$. 
Then, the function $t \mapsto \int_0^{t} \chi_S(s) ds$ is continuous, non-decreasing, has range $\R_{\ge 0}$ and $\int_0^{\infty} \chi_S(s) ds = + \infty$.
Now define 
\[
\varphi(t) := \inf \left \{t' \ge 0 : \int_0^{t'} \chi_S(s) ds  > t \right \}. 
\]
Then $\varphi$ is increasing, injective, has range $S' = \bigcup_{\alpha \in I} [a_\alpha,b_\alpha)$, and $\varphi(t) \xrightarrow{t \to \infty} \infty$. 
Let $\wt \lambda = \lambda \circ \varphi$.  Then $\wt \lambda: \R_{\ge 0} \to \R_{> 0}$ is a composition of non-decreasing functions that go to infinity, which implies that $\wt \lambda$ is non-decreasing and $\wt \lambda(t) \xrightarrow{t \to \infty} \infty$. 
Furthermore, for all $t > 0$, 
\begin{align*}
\ln \wt \lambda (t) &= \ln \lambda (\varphi (t)) \ge \int_{[0, \varphi(t)]} \lambda(s) ds 
 \ge \int_{[0, \varphi(t)] \cap S'} \lambda(s) ds  =  \int_{[0,t]}  \wt \lambda(s) ds, 
\end{align*}
where we used injectivity of $\varphi$ in the last equality. But, by the first part of the proof, this implies that $\wt \lambda$ is not defined for all time, which is a contradiction.
\end{proof}

\begin{corollary} \label{cor:int-ln}
Consider the ODE $\dot x = \lambda(t) x(t)$ where $t, x(t) \in \R_{\ge 0}$ and $\lambda: \R_{\ge 0} \to \R_{> 0}$ is a continuous, non-decreasing function such that $\lambda(t) \xrightarrow{t \to \infty} \infty$. Suppose that for every positive initial $x(0) \in \R_{>0}$, the ODE has a unique solution $x(t)$ for all time $t \in [0, \infty)$. Then 
$
\left\{ t \ge 0 : x(t) \le \lambda(t) \right\}
$
has finite measure.
\end{corollary}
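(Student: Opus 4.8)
The plan is to solve the linear equation explicitly and then reduce the claim to Lemma \ref{lem:int-ln} after a harmless constant rescaling of $\lambda$. Since $\lambda$ is continuous, the ODE $\dot x = \lambda(t) x$ has, for each initial value $x(0) > 0$, the unique solution $x(t) = x(0)\exp\!\big(\int_0^t \lambda(s)\,ds\big)$; this is the solution referred to in the hypothesis (which is in fact automatic for a linear equation and only serves to pin down which solution is meant). Because $\lambda(t) > 0$ and $x(t) > 0$, taking logarithms gives
\[ x(t) \le \lambda(t) \iff \int_0^t \lambda(s)\,ds \le \ln\lambda(t) - \ln x(0), \]
so the set in question is $E := \{\, t \ge 0 : \int_0^t \lambda(s)\,ds \le \ln\lambda(t) - \ln x(0)\,\}$, and it remains to show $\mathrm{meas}(E) < \infty$.

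First I would dispose of the case $x(0) \ge 1$: then $-\ln x(0) \le 0$, so $E$ is contained in the set $S$ of Lemma \ref{lem:int-ln}, and we are done. For $x(0) \in (0,1)$ the positive additive constant $-\ln x(0)$ must be absorbed. The key point is that, since $\lambda$ is non-decreasing with $\lambda(t) \to \infty$, there is a finite $T$ with $\lambda(t) \ge 4/x(0)$ for all $t \ge T$, and for such $t$ one has $\ln\lambda(t) - \ln x(0) \le 2\ln(\lambda(t)/2)$ (this inequality rearranges exactly to $\lambda(t) \ge 4/x(0)$). Now apply Lemma \ref{lem:int-ln} to $\mu := \lambda/2$, which is again continuous, non-decreasing, and tends to infinity: the set $S_\mu := \{\, t \ge 0 : \int_0^t \mu(s)\,ds \le \ln\mu(t)\,\}$ has finite measure, and since $\int_0^t \mu = \tfrac12\int_0^t \lambda$ and $\ln\mu(t) = \ln(\lambda(t)/2)$ we have $S_\mu = \{\, t : \int_0^t \lambda \le 2\ln(\lambda(t)/2)\,\}$. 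Hence $E \cap [T,\infty) \subseteq S_\mu$, and therefore $\mathrm{meas}(E) \le T + \mathrm{meas}(S_\mu) < \infty$.

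I do not expect a genuine obstacle here, as all the analytic work is already contained in Lemma \ref{lem:int-ln}; the only thing to be careful about is that the Lemma is stated with right-hand side exactly $\ln\lambda(t)$, not $\ln\lambda(t) + c$, so for $x(0) < 1$ one cannot feed it $\lambda$ directly. The point worth stating cleanly is that rescaling $\lambda$ by any constant $\alpha > 1$ replaces $\ln\lambda(t)$ by $\alpha\ln\lambda(t) - \alpha\ln\alpha$, and the negative term $-\alpha\ln\alpha$ together with $(\alpha - 1)\ln\lambda(t) \to \infty$ is precisely what lets the constant $-\ln x(0)$ be swallowed for $t$ large; the choice $\alpha = 2$ above is merely convenient.
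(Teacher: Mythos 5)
Your proof is correct, and at its core it takes the same route as the paper: solve the linear ODE explicitly, take logarithms, and reduce to Lemma \ref{lem:int-ln}. The difference is that the paper's proof is a one-liner that writes $\ln x(t) = \int_0^t \lambda(s)\,ds$ --- an identity valid only when $x(0)=1$ --- and then invokes the lemma directly, silently discarding the additive constant $\ln x(0)$. You correctly observe that for $x(0)\ge 1$ this constant has the right sign and the set is simply contained in the lemma's $S$, whereas for $x(0)<1$ the lemma cannot be fed $\lambda$ directly; your rescaling $\mu=\lambda/2$ (valid since $\mu$ is again continuous, non-decreasing, and unbounded), combined with the elementary inequality $\ln\lambda(t)-\ln x(0)\le 2\ln(\lambda(t)/2)$ for $\lambda(t)\ge 4/x(0)$, cleanly absorbs the constant at the cost of a finite initial segment $[0,T]$. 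This extra step is genuinely needed for a complete argument and is absent from the paper, so your version is the more careful of the two; nothing in it fails.
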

\begin{proof}
Fix an initial condition $x(0) = x_0 > 0$. Then, by integrating the ODE, $t \in [0, \infty)$, $\ln x(t) = \int_{0}^t \lambda(s) ds$. By Lemma \ref{lem:int-ln}, the set $\{\int_{0}^t \lambda(s) ds \le \ln \lambda(t) \}$ has finite measure, and so the result follows. 
\end{proof}

\begin{theorem} \label{thm:fintimeblowup}
A hyperchain $\HH$ taken with mass action kinetics does not have a finite-time blow-up if and only if $\HH$ is acyclic.
\end{theorem}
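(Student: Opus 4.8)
I would prove the two implications separately, leaning on the results already in place. For the direction ``$\HH$ acyclic $\Rightarrow$ no finite-time blow-up'', the plan is to use that a finite acyclic hyperchain admits a topological ordering of its vertices: after relabelling, $X_i \dashrightarrow X_j \in \HH$ forces $i<j$. I would then show by induction on $j$ that $x_j$ extends to all of $[0,\infty)$. For $j=1$ there are no incoming edges, so $\dot x_1 = 0$ and $x_1 \equiv x_1(0)$. For the inductive step, $\dot x_j = x_j \sum_{i<j} k_{ij} x_i = \lambda_j(t)\,x_j$, where by the inductive hypothesis $\lambda_j(t) := \sum_{i<j} k_{ij} x_i(t)$ is continuous on $[0,\infty)$; hence $x_j(t) = x_j(0)\exp\!\big(\int_0^t \lambda_j(s)\,ds\big)$ is finite for every finite $t$. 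Thus no component escapes in finite time and (by uniqueness of solutions for the polynomial vector field) the solution is global. In fact this bounds $x_j$ by an iterated exponential whose height is the depth of $X_j$ in the underlying DAG, but only global existence is needed here.

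For the converse, assume $\HH$ contains a cycle $X_{i_1}\dashrightarrow X_{i_2}\dashrightarrow\cdots\dashrightarrow X_{i_m}\dashrightarrow X_{i_1}$ (indices read mod $m$). I would first normalise the rate constants: applying Proposition \ref{prop:nondim} with the cycle edge $X_{i_\ell}\dashrightarrow X_{i_{\ell+1}}$ chosen as the designated weight-one outgoing edge of $X_{i_\ell}$, we pass to a diagonally conjugate system in which $k_{i_\ell i_{\ell+1}} = 1$ for all $\ell$. Since the diagonal conjugacy preserves the time parametrisation of orbits and the conjugating vector $s$ has all entries in $(0,\infty)$, it preserves the property of blowing up in finite time for trajectories from $\R^n_{>0}$; so it suffices to treat the normalised system. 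Now suppose, for contradiction, that the solution from some $x(0)\in\R^n_{>0}$ exists for all $t\ge 0$. Since $\dot x_j = x_j \sum_i k_{ij}x_i \ge 0$ on $\R^n_{\ge 0}$, every $x_j$ is nondecreasing; and since $\dot x_{i_{\ell+1}} \ge x_{i_{\ell+1}}\, x_{i_\ell} \ge x_{i_{\ell+1}}\, x_{i_\ell}(0)$ we get $x_{i_{\ell+1}}(t) \ge x_{i_{\ell+1}}(0)\,e^{x_{i_\ell}(0)\,t} \to \infty$ for every cycle node.

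The crux is then to run Corollary \ref{cor:int-ln} around the cycle. For each $\ell$, write $\dot x_{i_{\ell+1}} = \lambda_{\ell+1}(t)\,x_{i_{\ell+1}}$ with $\lambda_{\ell+1}(t) := \sum_i k_{i,i_{\ell+1}}x_i(t)$. This $\lambda_{\ell+1}$ is continuous, positive, nondecreasing (a sum of nondecreasing functions), and tends to $\infty$ because it dominates $x_{i_\ell}(t)$ (here the normalisation $k_{i_\ell i_{\ell+1}}=1$ is used), and the scalar linear equation $\dot x = \lambda_{\ell+1}(t)x$ has a unique global solution for every positive initial value; so Corollary \ref{cor:int-ln} yields that $\{t\ge 0 : x_{i_{\ell+1}}(t) \le \lambda_{\ell+1}(t)\}$ has finite measure. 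As $\lambda_{\ell+1}(t) \ge x_{i_\ell}(t)$, the set $E_\ell := \{t\ge 0 : x_{i_{\ell+1}}(t) \le x_{i_\ell}(t)\}$ also has finite measure, hence so does $E := \bigcup_{\ell=1}^m E_\ell$. Picking any $t^\ast \notin E$ gives $x_{i_1}(t^\ast) < x_{i_2}(t^\ast) < \cdots < x_{i_m}(t^\ast) < x_{i_1}(t^\ast)$, a contradiction; so the solution is not global, i.e.\ $\HH$ has a finite-time blow-up.

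I expect the main difficulty to lie in the converse, and specifically in arranging the cycle argument so that it actually closes: the naive chaining of inequalities around the cycle only produces $1 > \prod_\ell k_{i_\ell i_{\ell+1}}$, which is vacuous in general, so the reduction to unit cycle weights via Proposition \ref{prop:nondim} is load-bearing rather than cosmetic, and one must check carefully that diagonal conjugacy preserves finite-time blow-up. The acyclic direction is a routine induction once the topological ordering is invoked. If aligning the Corollary-based argument with the earlier results proves awkward, a self-contained alternative for the converse is to compare $W(t) := \prod_{\ell=1}^m x_{i_\ell}(t)$ with the scalar ODE $\dot W = c m\, W^{1+1/m}$, using $\dot W/W = \sum_\ell \dot x_{i_\ell}/x_{i_\ell} \ge c\sum_\ell x_{i_\ell} \ge c m\, W^{1/m}$ by AM--GM with $c = \min_\ell k_{i_{\ell-1} i_\ell}$; since $1 + 1/m > 1$, the comparison equation blows up in finite time, forcing $W$, and hence some $x_{i_\ell}$, to do the same, and this route sidesteps the normalisation.
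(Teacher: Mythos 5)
Your proof follows essentially the same route as the paper's: a topological stratification/induction over the DAG for the acyclic direction, and, for the converse, the same normalization of the cycle weights to $1$ via Proposition \ref{prop:nondim} followed by Corollary \ref{cor:int-ln} applied around the cycle to conclude that the sets $\{t : x_{i_{\ell+1}}(t) \le x_{i_\ell}(t)\}$ all have finite measure, yielding the contradiction $x_{i_1}(t^*) < x_{i_1}(t^*)$. The AM--GM comparison with $\dot W \ge c\,m\,W^{1+1/m}$ that you sketch as a fallback would be a genuinely more elementary alternative for the converse (avoiding Lemma \ref{lem:int-ln} entirely), but your primary argument is the paper's.
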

\begin{proof}
Suppose that $\HH$ is acyclic.
We construct an increasing sequence of sub-hyperchains of $\HH$, $\HH_0 \subsetneqq \HH_1 \subsetneqq \HH_2 \subsetneqq \ldots \subsetneqq \HH_r = \HH$ as follows. Let $\HH_i = (V_i, D_i)$ where $V_i$ is the set of vertices and $D_i$ is the set of directed edges of $\HH_i$. 
Write $\HH = (V,D)$. 
Let $V_0$ be the nonempty set consisting of initial nodes of $\HH$ and let $D_0$ be the empty set. Define
\begin{align*}
V_{i+1} &:= V_i \bigcup \left \{X \in V:  X' \dashrightarrow X \mbox{ for some } X' \in V_i \right\}.
\\
D_{i+1} &:= D_i \bigcup \left \{X' \dashrightarrow X \in D:  X' \in V_i \right\}.
\end{align*}
Let $r$ be the largest index such that $\HH_{r} \setminus \HH_{r-1}$ is nonempty, i.e. $\HH_r = \HH$. For any species $X \in V_0$, $\dot x = 0$, and so $x(t) = x(0)$ for all $t \in \R_{\ge 0}$. In particular, $x(t)$ is defined for all $t \in \R_{\ge 0}$. Suppose that $y(t)$ is defined for all $t \in \R_{\ge 0}$ for every species $Y$ in $V_i$, $i \in \{0, \ldots, r-1\}$. Let $X \in V_{i+1}$. Then 

\[
\dot x(t) = x(t) \sum_{\{Y \dashrightarrow X: Y \in V_i\}} \left(k_{Y \dashrightarrow X}\right) y(t) =: x(t) \lambda(t). 
\]
Here $k_{Y \dashrightarrow X}$ is the reaction rate constant associated to $Y \dashrightarrow X$ and we defined 
\[
\lambda(t) \coloneqq \sum_{\{Y \dashrightarrow X: Y \in V_i\}} \left(k_{Y \dashrightarrow X}\right) y(t).
\] 

Clearly $\lambda(t)$ is defined for all $t \in \R_{\ge 0}$ and so $x(t) = \exp \left( \int_0^t \lambda(s) ds \right)$ is defined for all $t \in \R_{\ge 0}$.

Now, suppose that $\HH$ is not acyclic, i.e. $\HH$ has a sub-hyperchain $\HH'$ which is a directed cycle. It suffices to show that $\HH'$ has a finite-time blow-up. Label the species in $\HH'$ to be $X_1, X_2, \ldots, X_n$, where $X_i \dasharrow X_{i+1} \in \HH'$, and the summation over index $i$ is mod $n$.  
By Proposition \ref{prop:nondim}, we may assume without loss of generality that all rate constants in $\HH'$ are 1. 
Let $x(0) = (x_1(0), \ldots, x_n(0)) \in \R^n_{> 0}$ be an initial value. Suppose that the initial value problem has a solution for all $t \in [0, \infty)$. Let $x(t) = (x_1(t), \ldots, x_n(t))$ be one of these solutions. 
By Corollary \ref{cor:int-ln}, $\{t : x_{i+1}(t) \le x_i(t)\}$ has finite measure for all $i \in \{1, \ldots, n\}$. But then, it follows that $\{t : x_1(t) \le x_1(t)\}$ has finite measure, which is obviously a contradiction. Therefore, the assumption that the initial value problem has a solution for all $t \in [0, \infty)$ must be false. 
\end{proof}

\section{Relative Concentration System of a Hyperchain} \label{sec:relativeconcentration}

Every reaction in a hyperchain increases the concentration of some species. 
In a realistic set-up, the excess concentration might diffuse out of the reaction volume and into the environment, thus keeping the relevant system variables finite for all time. To model this, we might augment the hyperchain system with appropriate outflow reactions of the type $X_i \to 0$. This design hews close to the experimental setup known as {\em chemostat}, a thermodynamically open system that allows for mass transfer through inflows and outflows. 

Alternatively, we perform a nonlinear transformation on the coordinates and study the dynamics of the relative concentrations of species. The {\em relative concentration of $X_i$} (i.e. relative to the total concentration) is defined to be $x_i/\bar x$ where $\bar x := \sum_{j=1}^n x_j$. After an appropriate uniform rescaling of the reaction rates by the total concentration, the resulting dynamical system of relative concentrations of $X_i$ is confined within the bounded region $S_{n} = \left \{(x_1, \ldots, x_n) \in \R^n | 
x_i \ge 0, \sum_{i=1}^n x_i = 1 \right \}$. It turns out that the two apparently distinct solutions for the problem of ever-increasing concentrations are, in fact, intimately related. We give a brief account here, see also \cite{hofbauer1988theory}.

Let $S_{n} = \left \{(x_1, \ldots, x_n) \in \R^n | 
x_i \ge 0, \sum_{i=1}^n x_i = 1 \right \}$ be the standard simplex in the nonnegative orthant of $n$-dimensional Euclidean space. We define a nonlinear, invertible change of variables:
\begin{align} \label{eq:nonlin}
 \Phi: \R^n_{\ge 0} \setminus \{0 \} &\to \R^n_{\ge 0} \setminus \{0 \} = {\color{teal!100} S_{n}} \times {\color{magenta} \R_{> 0}} \nonumber\\
 (x_1, \ldots, x_n) &\mapsto (\widetilde x_1, \ldots, \wt x_n, \ol x) \\
\text{\color{magenta} (total conc) } \ol x &= \sum x_i \nonumber \\
\text{\color{teal!100} (relative conc) } \widetilde x_i &= x_i \left /\ol x \quad (1\le i \le n) \right. \nonumber
\end{align}
where we refer to $\ol x$ as the total concentration and to $\wt x_i$ as the relative concentration of the species $X_i$. To make the distinction sharper, we will refer to $x_i$ as the absolute concentration of the species $X_i$. 
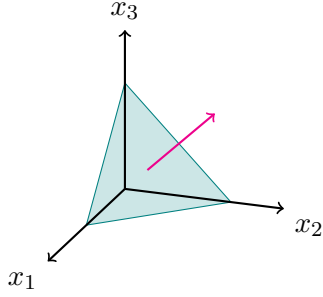
\begin{figure}
\begin{center}
\tdplotsetmaincoords{70}{110}
\begin{tikzpicture}[scale=1.5,tdplot_main_coords]
    \def\x{.5}
    \filldraw[
        draw=teal,%
        fill=teal!20,%
    ]          (1,0,0)
            -- (0,1,0)
            -- (0,0,1)
            -- cycle;
    \draw[thick,->] (0,0,0) -- (2,0,0) node[anchor=north east]{$x_1$};
    \draw[thick,->] (0,0,0) -- (0,1.5,0) node[anchor=north west]{$x_2$};
    \draw[thick,->] (0,0,0) -- (0,0,1.5) node[anchor=south]{$x_3$};
     \draw[magenta,thick,->] (1/3,1/3,1/3) -- (4/3,4/3,4/3) node[anchor=south]{};
\end{tikzpicture}
\caption{Change of variables to $S_n \times \R_{> 0}$}\label{fig:change}
\end{center}
\end{figure}
The variables $\wt x_i$ are restricted to the simplex $S_n$, while $\ol x$ is along a ray orthogonal to the simplex, starting at the origin and heading to a point at infinity. Figure \ref{fig:change} depicts $S_3$ (in teal) and the corresponding ray (in magenta). Consider the system of ODEs $\dot x = \rho(x)$ where $x, \rho(x) \in \R^n_{\ge 0} \setminus \{0\}$ with the property that $\rho$ is a homogeneous polynomial system with every term a monomial of degree $m$. Then, the automorphism $\Phi$ transforms $\dot x = \rho(x)$ into
\begin{align} \label{eq:rel1}
\dot{\wt x_i} &= \ol x^{m-1} \left( \rho_i (\wt x) - \wt x_i \ol \rho(\wt x) \right) \nonumber\\
\dot{\ol x} &= \ol x^{m} \ol \rho(\wt x), 
\end{align} 
where $\ol \rho := \sum_i \rho_i$.
For the hyperchain system $\rho_i(x) = x_i f_i(x)$, and so the above is equivalent to
\begin{align} \label{eq:rel3}
\dot{\wt x_i} &=  \ol x^{m-1} \wt x_i \left( f_i( \wt x) - \ol \rho (\wt x) \right) \nonumber\\ 
\dot{\ol x} &= \ol x^m ~\ol \rho (\wt x) 
\end{align}
where $\ol \rho (\wt x) = \sum_{i,j} k_{ij} \wt{x_i} \wt{x_j}$. 
After a time-rescaling (see for instance \cite{kuehn2015multiple}), we get the following system which has the same set of orbits as \eqref{eq:rel1}
\begin{align} \label{eq:rel2}
\wt x_i' := \dv{\wt x_i}{\tau} &=  \left( \rho_i (\wt x) - \wt x_i \ol \rho(\wt x) \right) \nonumber \\
\ol x' := \dv{\ol x}{\tau} &= \ol x ~ \ol \rho(\wt x). 
\end{align} 
The absence of $\ol x$ in $x_i'$ shows that the relative concentration system on $S_n$ decouples from the total concentration $\ol x$. 

Now, we consider the alternative of embedding the dynamical system of a hyperchain in a chemostat. Formally, we augment the underlying autocatalytic reactions with the set of flow reactions $\{ X_i \xrightarrow{\ell} 0 | 0 \le i \le n \}$, one for each species in the hyperchain. Each species flows out at the rate $\ell$. Including the outflow reactions results in a modification of the system $\dot x = \rho(x)$ to $\dot x = \rho(x) - \ell x$. 
The total concentration is governed by $\dot{\ol x} = \ol \rho(x) - \ell \ol x$, which is held at a constant value if $\ell = \ol \rho(x) / \ol x$. With this assumption, the dynamical system is
\[
\dot x = \mu(x) := \rho(x) - \wt x \ol \rho(x)
\]
If we apply $\Phi$ and rescale time $\tau := \ol x^{m-1} t$ as in \eqref{eq:rel2}, we get 
\begin{align*} 
\wt x_i' := \dv{\wt x_i}{\tau} &=  \left( \mu_i (\wt x) - \wt x_i \ol \mu(\wt x) \right) \nonumber \\
\ol x' := \dv{\ol x}{\tau} &= \ol x ~ \ol \mu(\wt x). 
\end{align*} 
It is easy to check that $\ol \mu(\wt x) =0$ and so $\wt x ' = \mu(\wt x) = \rho(\wt x) - \wt x \ol \rho (\wt x)$.  The relative concentration system is identical to \eqref{eq:rel2} while the total concentration is held to a constant value of 1 instead of going to infinity. Thus the chemostat version, which is in principle, realizable in a laboratory, has the virtue that the dynamics are truly restricted to $S_n$.

\section{Dynamics of Relative Concentrations in a Hyperchain} \label{sec:dynamics_relconc}

Our goal in this article is to relate the network topology of a hyperchain $\HH$ or a labeled hyperchain $(\HH,K)$ with its dynamical properties under mass-action kinetics. As a matter of convenience in phrasing, we abbreviate ``mass action ODE system generated by $(\HH,K)$'' to the ``hyperchain system $(\HH,K)$''. 
For instance, we say ``$(\HH,K)$ has dynamical property $\PP$'' as an abbreviation for the more accurate ``the mass action ODE system generated by $(\HH,K)$ has dynamical property $\PP$''. 
Similarly, ``$\HH$ has dynamical property $\PP$'' is an abbreviation for ``for any choice of mass action kinetics $K$, the mass action ODE system generated by $(\HH,K)$ has dynamical property $\PP$''. 

From this section onwards, we are {\em only} concerned with the dynamics of {\em relative} concentrations of $X_i$ under mass action kinetics. We drop the $~\widetilde{  }~$ and denote the relative concentration of the species $X_i$ by $x_i$. 
For $x =(x_1, \ldots, x_n)^T \in S_n$, hyperchain dynamics under mass action kinetics are governed by 
\begin{equation} \label{eq:comp}
\dot x_i  = x_i \left( \sum_{j=1}^n k_{ji} x_j - \ol\rho(x) \right) 
\end{equation}
where $\ol \rho(x) = \sum_{\ell, j}  k_{j\ell} x_j x_\ell$. 
We state here two alternative presentations of \eqref{eq:comp}. Let $f_i(x) = \sum_{j=1}^n k_{ji} x_j$, $f(x) = (f_1(x), \ldots, f_n(x))^T$, $K$ be the rate constant matrix, and $\one := (1, \ldots, 1)^T \in \R^n$. Then $f(x) = K^T x$, i.e. $\sum_{j=1}^n k_{ji} x_j = \left(K^T x\right)_i$ and $\bar \rho(x) =  x^T K^T x = x^T K x$. Moreover, \eqref{eq:comp} is equivalent each of the following:
\begin{align}
\dot x &= x \ast \left( f(x) - (x \cdot f) \one \right)   \label{eq:vec} \\
\dot x &= x \ast \left( K^T x - \left( x^T K^T x \right) \one \right) = x \ast K^T x - \left( x^T K^T x \right) x  \label{eq:mat}
\end{align}

Since the fundamental object of study, the hyperchain is a (directed) graph, we begin with some graph-theoretic concepts. 
 
 \begin{definition}
 Consider a directed graph (digraph) $G = (V,D)$ where $V$ is the set of vertices and $D \subseteq V \times V$ is the set of directed edges. We will denote a directed edge $(u,v)$ by $u \dashrightarrow v$, and refer to this edge as exiting vertex $u$ and entering vertex $v$. 
 \been
 \item The {\em indegree (outdegree)} of a vertex $v \in V$ is the number of edges entering (exiting) $v$. An {\em initial (terminal) node} is a vertex with indegree (outdegree) 0. $G$ is a {\em rooted graph} if $G$ has at least one initial node, and {\em unrooted} otherwise.
 \item A {\em linear graph} is a digraph where every vertex has both indegree and outdegree equal to 1. 
 \item A {\em subgraph of $G = (V,D)$} is a graph $G'=(V',D')$ such that $V' \subseteq V$, $D' \subseteq D$, and  $u \dashrightarrow v \in D'$ implies that $\{u,v\} \subseteq V'$. $G'$ is a {\em spanning subgraph of $G$} if $V' = V$. 
 \item A {\em strongly connected graph} is a digraph for which there exists a directed path of edges between any two vertices. 
 \item A {\em cycle} is a strongly connected, linear graph. An {\em even cycle} is a cycle with an even number of edges. A linear digraph is said to be {\em odd (even)} if it has odd (even) number of even cycles. 
 \item A {\em Hamiltonian cycle} of $G = (V,D)$ is a strongly connected, spanning, linear subgraph of $G$. $G$ is {\em Hamiltonian} if $G$ has a Hamiltonian cycle. 
 \enen
 \end{definition}
 
\begin{proposition}
Suppose that $\HH$ is a linear graph. Then, for any choice of mass action rate constants $K$ and  $K'$, the systems $(\HH,K)$ and $(\HH,K')$ are diagonally conjugate. 
\end{proposition}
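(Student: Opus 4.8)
The plan is to obtain this as a short corollary of Proposition \ref{prop:nondim}. The key observation is that in a linear graph every vertex has out-degree (and in-degree) exactly $1$, so every vertex is non-terminal and has a \emph{unique} outgoing edge. Applying Proposition \ref{prop:nondim} to $(\HH,K)$ therefore produces a rate matrix $K_0\in\R^{n\times n}_{\ge 0}$ in which the unique outgoing edge of each vertex carries weight $1$; since there is only that one edge per vertex, this pins down $K_0=A(\HH)$, the adjacency matrix of $\HH$, \emph{independently of the input matrix $K$}. Hence $(\HH,K)$ is diagonally conjugate to $(\HH,A(\HH))$, and by the identical reasoning $(\HH,K')$ is diagonally conjugate to $(\HH,A(\HH))$.

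The second ingredient is that diagonal conjugacy is an equivalence relation on these systems. Reflexivity holds with $s=\one$; symmetry is already contained in the proof of the Lemma preceding Proposition \ref{prop:nondim} (from $g(s\ast x)=s\ast f(x)$ one gets $f(s^{\ast-1}\ast x)=s^{\ast-1}\ast g(x)$); and transitivity holds because a composition of diagonal scalings is again a diagonal scaling: if $g(s\ast x)=s\ast f(x)$ and $h(t\ast y)=t\ast g(y)$, then $h\bigl((t\ast s)\ast x\bigr)=t\ast g(s\ast x)=(t\ast s)\ast f(x)$. Combining the two ingredients, $(\HH,K)$ and $(\HH,K')$ are both diagonally conjugate to $(\HH,A(\HH))$, hence to each other. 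If one wants an explicit witness, let $\pi$ be the permutation for which the unique outgoing edge of $X_i$ in $\HH$ enters $X_{\pi(i)}$, and let $k_{i,\pi(i)}$ and $k'_{i,\pi(i)}$ be the weights of this edge under $K$ and $K'$; then the scaling $s$ with $s_i=k_{i,\pi(i)}/k'_{i,\pi(i)}$ realizes the conjugacy directly.

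I do not anticipate a genuine obstacle here. The one point that needs a moment's care is precisely the claim that a linear graph leaves \emph{no freedom} in the normalization of Proposition \ref{prop:nondim} — because each vertex has a single outgoing edge, that edge is forced to weight $1$ — so that $K$ and $K'$ normalize to the \emph{same} system $(\HH,A(\HH))$, which is what lets the two conjugacies be chained. A minor bookkeeping remark: Proposition \ref{prop:nondim} is phrased for the absolute-concentration mass action system, whereas in this section $(\HH,K)$ refers to the relative-concentration system of Section \ref{sec:relativeconcentration}; since the latter is obtained from the former by the fixed change of variables $\Phi$ together with a time-rescaling, the orbit correspondence furnished by the diagonal conjugacy descends to the relative-concentration system, so the statement holds in the intended reading as well.
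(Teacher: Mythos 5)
Your proof is correct and follows essentially the same route as the paper: apply Proposition \ref{prop:nondim} to both $K$ and $K'$, note that linearity (unique outgoing edge per vertex) forces both to normalize to the all-ones assignment, and chain the conjugacies. Your explicit treatment of diagonal conjugacy as an equivalence relation and the explicit witness $s_i=k_{i,\pi(i)}/k'_{i,\pi(i)}$ are details the paper leaves implicit, but the argument is the same.
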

\begin{proof}
Consider two hyperchain systems $(\HH, K)$ and $(\HH, K')$. Then by Lemma \ref{prop:nondim}, the two systems are diagonally conjugate to $(\HH, K_0)$ and $(\HH, K_0')$, respectively, where $K_0$ and $K_0'$ are defined as in the proof of Lemma \ref{prop:nondim}. However, from the definition of a linear network, it follows that $K_0 = K_0' = \one$, where $\one$ is the assignment where every reaction rate constant is equal to 1. It follows that $(\HH, K)$ and $(\HH, K')$ are diagonally conjugate. 
\end{proof}

We devote the rest of this section to finding all equilibria of a hyperchain system, and the stability of these equilibria. 


\subsection{Positive equilibria} \label{sec:poseq}

From \eqref{eq:vec}, the positive equilibria $x \in S_n$ of a hyperchain system are solutions of 
\begin{equation} \label{eq:poseq}
f_1(x) = f_2(x) = \ldots = f_n(x) = \ol \rho(x).
\end{equation}
So $x$ is a positive equilibrium of a hyperchain system if and only if $f = (x \cdot f) \one$ and $x \cdot \one =1$. We show that the final condition is redundant. 
\begin{lemma} \label{lem1:invset}
Let $f = c \one$ for some $c \ne 0$. Then $c = x \cdot f$ if and only if $x \cdot \one =1$. 
\end{lemma}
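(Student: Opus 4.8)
The plan is to simply unwind the definition of the Euclidean inner product using its bilinearity. Since the hypothesis says $f = c\one$, we compute $x \cdot f = x \cdot (c\one) = c\,(x \cdot \one)$, where the last equality uses linearity of the dot product in its second argument. Substituting this into the equation $c = x \cdot f$ turns it into $c = c\,(x \cdot \one)$.

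From here, because $c \ne 0$, we may cancel the common factor $c$ from both sides, which yields the equivalence $c = x\cdot f \iff x\cdot\one = 1$, as claimed. This is essentially the whole argument; there is no substantive obstacle. The one point worth flagging is that the hypothesis $c \ne 0$ is genuinely used: if $c = 0$ then $x \cdot f = 0$ automatically for every $x$, while $x\cdot\one = 1$ certainly need not hold, so the stated equivalence would break down. In the intended application — showing that the normalization $x\cdot\one = 1$ is redundant once $f = (x\cdot f)\one$ holds — the relevant constant $c = \ol\rho(x)$ is positive on $S_n$ (indeed on $\R^n_{>0}$), so the $c \ne 0$ hypothesis is automatically satisfied there.
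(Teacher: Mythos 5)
Your proof is correct and follows essentially the same route as the paper: both reduce the claim to the identity $x\cdot f = c\,(x\cdot\one)$ and then use $c\ne 0$ to cancel. The only cosmetic difference is that you treat both directions at once via cancellation, while the paper argues the two implications separately.
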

\begin{proof}
Suppose that $c = x \cdot f$. Then $f = (x \cdot f) \one$ and so $x \cdot f = (x \cdot f) x \cdot \one$. Since $c = x \cdot f \ne 0$, $x \cdot \one =1$. Suppose that $x \cdot \one = 1$. Then $1 = x \cdot \left( \frac{1}{c}\right) f$ and so $c = x \cdot f$. 
\end{proof}
Thus the positive equilibria of a hyperchain system are solutions of $f = (x \cdot f) \one$ where $f = K^T x$. In matrix notation, the positive equilibria are solutions of the system
\begin{align} \label{eq:mateq}
K^T x =  \left( x^T K^T x \right) \one
\end{align}
Procedurally, it is more convenient to find positive solutions of $K^T z = \one$ and then find the positive equilibria from $x = z/\ol z$. 
We now give a necessary and sufficient condition for the existence of a positive equilibrium. 

\begin{theorem}
There is a $K$ such that $(\HH, K)$ has a positive equilibrium if and only if $\HH$ is unrooted.
\end{theorem}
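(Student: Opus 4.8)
The plan is to characterize positive equilibria via the linear-algebra reformulation already derived in the excerpt. By equation~\eqref{eq:mateq} together with the reduction ``find positive $z$ with $K^T z = \one$ and set $x = z/\ol z$'', there is a positive equilibrium of $(\HH,K)$ if and only if the linear system $K^T z = \one$ has a strictly positive solution $z \in \R^n_{>0}$. So the theorem reduces to: there exists $K \ge 0$ respecting the edge-support of $\HH$ such that $K^T z = \one$ has a positive solution, if and only if $\HH$ is unrooted.

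For the easy direction (positive equilibrium $\implies$ unrooted), I would argue by contrapositive. If $\HH$ is rooted, it has an initial node $X_i$, i.e.\ a node with indegree $0$. Then the $i$-th column of $K$ is zero (no edge $X_j \dashrightarrow X_i$ means $k_{ji} = 0$ for all $j$), so the $i$-th entry of $K^T z$ is $\sum_j k_{ji} z_j = 0$ for every $z$, which can never equal $1$. Hence $K^T z = \one$ is unsolvable for any admissible $K$, so (by \eqref{eq:mateq}) $(\HH,K)$ has no positive equilibrium for any $K$. Equivalently, in dynamical terms, $f_i(x) = 0$ at any point while \eqref{eq:poseq} would force $f_i(x) = \ol\rho(x) > 0$ on the positive simplex, a contradiction.

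For the converse (unrooted $\implies$ some $K$ gives a positive equilibrium), I would construct $K$ explicitly. Since $\HH$ is unrooted, every node has indegree at least $1$; choose for each node $X_i$ one incoming edge $X_{\sigma(i)} \dashrightarrow X_i$. The plan is to build $K$ supported on just these $n$ chosen edges (possibly with further edges assigned weight $0$, which is allowed by the paper's convention) and solve $K^T z = \one$. With only the chosen edges active, $K^T z = \one$ becomes the system $k_{\sigma(i),i}\, z_{\sigma(i)} = 1$ for each $i$, i.e.\ $z_{\sigma(i)} = 1/k_{\sigma(i),i}$. Here the subtlety — and the one place to be careful — is that $\sigma$ need not be a bijection, so a single coordinate $z_j$ may be pinned by several equations $z_j = 1/k_{j,i}$ for various $i$ with $\sigma(i) = j$; I simply resolve this by choosing all those rate constants equal, $k_{j,i} = 1/z_j$, for a freely prescribed target vector $z \in \R^n_{>0}$ (say $z = \one$). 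This yields a consistent $K \ge 0$ with the correct support and $K^T z = \one$; then $x := z/\ol z$ lies in the interior of $S_n$ and, by \eqref{eq:mateq}, is a positive equilibrium of $(\HH,K)$.

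I do not anticipate a serious obstacle: the only thing to handle with care is the many-to-one nature of the ``choose an incoming edge'' map $\sigma$, and the remark that setting extra (non-chosen) edge weights to zero is permissible under the paper's stated convention that zero weights encode absent edges — so the constructed $K$ genuinely is a matrix of mass-action rate constants for $\HH$. Everything else is the bookkeeping of pushing the equilibrium condition through the already-established equivalence in \eqref{eq:mateq}.
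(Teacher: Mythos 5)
Your forward direction is fine: an initial node $X_i$ forces the $i$-th column of $K$ to vanish, so $(K^T x)_i = 0$ while the equilibrium condition \eqref{eq:mateq} demands $(K^T x)_i = x^T K^T x > 0$ (positive because $\HH$ has at least one edge and $x$ is positive). This is a slight repackaging of the paper's argument and is correct.

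The converse direction has a genuine gap, and it is exactly at the point you flagged as a mere subtlety. You build $K$ supported only on one chosen incoming edge per node and assign weight $0$ to all other edges of $\HH$, claiming this ``is allowed by the paper's convention.'' It is not: the paper's convention is that $k_{ji}$ \emph{equals zero if and only if} $X_j \dashrightarrow X_i$ is \emph{not} an edge of $\HH$, and the matrix $K$ is defined with $(K)_{i,j} = k_{ij} > 0$ whenever $X_i \dashrightarrow X_j \in \HH$. So the matrix you construct is a rate-constant matrix for the proper sub-hyperchain consisting of the chosen edges, not for $\HH$, and what you prove is the statement for that subgraph rather than the theorem as stated. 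Nor can you patch this by simply perturbing the remaining edges to a small $\ep > 0$: since your selection map $\sigma$ need not be injective, the rows of your $K_0^T$ can be parallel, $K_0^T$ can be singular, and solvability of $K^T z = \one$ need not persist under perturbation. The clean fix --- and the paper's actual construction --- is to put positive weight on \emph{every} edge by distributing it among the incoming edges of each node, $k_{ji} = 1/d_{in}(X_i)$ for each $X_j \dashrightarrow X_i \in \HH$; then $f_i(\tfrac{1}{n}\one) = \tfrac{1}{n}$ for all $i$ and $\tfrac{1}{n}\one$ is a positive equilibrium. This is the same idea as yours (use indegree $\ge 1$ everywhere to balance the rows of $K^T$), but it respects the positivity constraint on the rate constants.
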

\begin{proof} 
The positive equilibria of $(\HH,K)$ are positive solutions of \eqref{eq:poseq} where $f_i(x) = \sum_{j=1}^n k_{ji} x_j$ for $1 \le i \le n$. 

Suppose that $\HH$ is rooted. Let $x^* \in \R^n_{>0}$ be a solution of \eqref{eq:poseq}. If $X_j$ is an initial node, then $f_j(x^*) = 0$. But this means that $f_i(x^*) = 0$ for all $i \in \{1, \ldots, n\}$. Thus the component of $x^*$ for every non-terminal node must be $0$, and there is at least one non-terminal node in $\HH$, which contradicts the positivity of $x^*$. Therefore, \eqref{eq:poseq} does not have a positive solution.

Suppose that $\HH$ is unrooted. Let $d_{in}(X)$ be the indegree of the node $X$. Since $\HH$ is unrooted, $d_{in}(X) \ge 1$ for every node $X$. Define $K$ as follows. For every node $X_i$, and every edge $X_j \dashrightarrow {X_i} \in \HH$,  let $k_{ji} = 1/d_{in}(X_i)$.  Then
\[
f_i\left(\frac{1}{n} \one\right) = \frac{1}{d_{in}(X_i)} \frac{1}{n} \left( \sum_{X_j \dashrightarrow X_i \in \HH} 1 \right) =  \frac{1}{n} 
\]
Therefore $\frac{1}{n} \one = \left(\frac{1}{n},\ldots,\frac{1}{n}\right)$ is a positive equilibrium of  $(\HH,K)$. 
\end{proof}

Reaction networks generate nonlinear polynomial dynamical systems under mass-action kinetics, and so are generally multistationary, i.e. they admit more than one positive equilibrium  \cite{craciun2005multiple,joshi2015survey}. 
The number of equilibria of a hyperchain system is severely restricted. In particular, a hyperchain system cannot be multistationary. 

\begin{theorem}
Let $(\HH,K)$ be a hyperchain system. $(\HH,K)$ can have either $0, 1$ or infinitely many positive equilibria, and there are no other possibilities. 
\end{theorem}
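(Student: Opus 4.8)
The plan is to reduce the count of positive equilibria to the count of positive solutions of a single affine-linear system, and then invoke an elementary structural fact about such solution sets. As a first step I would reformulate the equilibrium condition: by \eqref{eq:mateq} together with Lemma \ref{lem1:invset}, a point $x \in S_n$ with $x > 0$ is a positive equilibrium of $(\HH,K)$ exactly when $K^T x = \bar\rho(x)\,\one$, where $\bar\rho(x) = x^T K x$. Since the hyperchain has at least one edge, $K$ has a positive entry, so $\bar\rho(x) = \sum_{i,j} k_{ij} x_i x_j > 0$ for every $x > 0$; hence the equilibrium relation says precisely that $K^T x$ is a \emph{positive} scalar multiple of $\one$.

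Next I would set up a bijection between the positive equilibria and the positive solutions $z \in \R^n_{>0}$ of the linear system
\begin{equation*}
K^T z = \one.
\end{equation*}
If $x$ is a positive equilibrium, then $z := x/\bar\rho(x)$ satisfies $z > 0$ and $K^T z = \one$. Conversely, given $z > 0$ with $K^T z = \one$, set $x := z/(z\cdot\one) \in S_n$; then $K^T x = c\,\one$ with $c := 1/(z\cdot\one) > 0$, and $\bar\rho(x) = x\cdot(K^T x) = c\,(x\cdot\one) = c$, so $K^T x = \bar\rho(x)\,\one$ and $x$ is a positive equilibrium. A direct substitution shows the assignments $x \mapsto x/\bar\rho(x)$ and $z \mapsto z/(z\cdot\one)$ are mutually inverse, so the number of positive equilibria equals the number of positive solutions of $K^T z = \one$.

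It then remains to establish the trichotomy for this linear system. The solutions of $K^T z = \one$ in $\R^n$ form either the empty set, a single point, or an affine subspace $A$ of dimension at least $1$ (a translate of $\ker K^T$). Intersecting with the open orthant $\R^n_{>0}$: in the first two cases there is at most one positive solution, while in the third case, if some positive solution $p$ exists, then a whole relative neighborhood of $p$ in $A$ lies in $\R^n_{>0}$, and since $\dim A \ge 1$ that neighborhood is infinite. In every case the number of positive solutions — equivalently, of positive equilibria — is $0$, $1$, or infinite, with no other possibility.

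I do not anticipate a genuine obstacle here: the statement is essentially a linear-algebra fact once the reduction is in place. The only points requiring care are the bookkeeping of the normalizing scalar in the bijection of the second step, and the observation in the final step that a nonempty open subset of an affine space of dimension at least $1$ contains infinitely many points.
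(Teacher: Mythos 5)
Your proof is correct and follows essentially the same route as the paper: reduce positive equilibria to positive solutions of the linear system $K^T z = \one$ and invoke the $0$/$1$/$\infty$ trichotomy for linear systems. You supply two details the paper elides — the explicit bijection with its normalizing scalars, and the observation that an affine solution set of positive dimension meeting the open orthant must contain infinitely many positive points — both of which are worthwhile but do not change the argument.
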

\begin{proof}
The positive equilibria of $(\HH,K)$ are positive scalar multiples of $z$, where $z$ is a positive solution of $K^T z = \one$. Since this is a linear equation, there are only three possibilities: $0, 1$ or infinitely many solutions. 
\end{proof}

We now give a necessary and sufficient condition for the existence of a {\em unique} positive equilibrium. 

\begin{theorem} \label{thm:uniqueposeq}
Consider a hyperchain $\HH$.
The following are equivalent.
\been
\item $\HH$ has a  spanning linear subgraph.
\item There is a $K$ such that $(\HH,K)$ has a unique positive equilibrium. 
\item There is a $K$ such that $(\HH,K)$ has an isolated positive equilibrium.
\item There is a $K$ such that $\det(K) \ne 0$. 
\enen
\end{theorem}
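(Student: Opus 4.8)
The plan is to establish the cycle of implications $(1)\Rightarrow(2)\Rightarrow(3)\Rightarrow(4)\Rightarrow(1)$; the step $(2)\Rightarrow(3)$ needs no argument, since a unique positive equilibrium is in particular an isolated one. The two facts I will lean on throughout are: for $x\in\R^n_{>0}$ one has $x^T K x=\sum_{i,j}k_{ij}x_ix_j>0$ (some $k_{ij}>0$ because $\HH$ has at least one edge); and, as recorded before the theorem, $x\in S_n$ is a positive equilibrium of $(\HH,K)$ if and only if $z:=x/(x^TKx)$ is a positive solution of $K^Tz=\one$, while conversely any positive solution $z$ yields the positive equilibrium $z/\ol z$.

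For $(3)\Rightarrow(4)$ I will argue contrapositively on a fixed $K$: assume $\det K=0$ and show no positive equilibrium of $(\HH,K)$ can be isolated. If $x^\ast$ is a positive equilibrium, set $z^\ast=x^\ast/((x^\ast)^TKx^\ast)>0$, so $K^Tz^\ast=\one$. Since $\det K^T=0$, choose $0\neq v\in\ker K^T$; for all small $|t|$ the vector $z^\ast+tv$ is positive and still solves $K^Tz=\one$, so $x_t:=(z^\ast+tv)/\overline{z^\ast+tv}\in S_n$ is a positive equilibrium, and $x_t\to x^\ast$ as $t\to0$. For $t\neq0$ we have $x_t\neq x^\ast$: otherwise $z^\ast+tv$ would be a positive scalar multiple of $z^\ast$, forcing $v\parallel z^\ast$, contradicting $z^\ast\notin\ker K^T$ (as $K^Tz^\ast=\one\neq0$). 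Hence $x^\ast$ is not isolated, so an isolated positive equilibrium forces $\det K\neq0$.

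For $(4)\Rightarrow(1)$ I will use the Leibniz expansion $\det K=\sum_{\sigma\in S_n}\operatorname{sgn}(\sigma)\prod_{i=1}^n k_{i,\sigma(i)}$. A term is nonzero only if $k_{i,\sigma(i)}\neq0$ for every $i$, i.e.\ $X_i\dashrightarrow X_{\sigma(i)}\in\HH$ for every $i$; the edge set $\{X_i\dashrightarrow X_{\sigma(i)}\}_{i=1}^n$ then has out-degree $1$ at every vertex and, since $\sigma$ is a bijection, in-degree $1$ at every vertex, hence is a spanning linear subgraph of $\HH$. Consequently, if $\HH$ has no spanning linear subgraph, every term vanishes and $\det K=0$ for \emph{every} rate matrix $K$ of $\HH$, contradicting $(4)$.

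Finally, for $(1)\Rightarrow(2)$, fix a spanning linear subgraph $\LL$ of $\HH$. Its adjacency matrix $P:=A(\LL)$ is a permutation matrix, so $\det P=\pm1\neq0$, and since $P^T\one=\one$ we get $(P^T)^{-1}\one=\one>0$. Let $K_\ep$ be the rate matrix of $\HH$ with $k_{ij}=1$ on the edges of $\LL$ and $k_{ij}=\ep$ on the edges of $\HH\setminus\LL$, so $K_\ep\to P$ entrywise as $\ep\to0^+$. By continuity of $K\mapsto\det K$ and of $K\mapsto(K^T)^{-1}$ on the open set $\{\det\neq0\}$ (which contains $P$), for all small enough $\ep>0$ we have $\det K_\ep\neq0$ and $(K_\ep^T)^{-1}\one>0$; then $z:=(K_\ep^T)^{-1}\one$ is the unique solution of $K_\ep^Tz=\one$ and is positive, so $z/\ol z$ is the unique positive equilibrium of $(\HH,K_\ep)$. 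I expect this last step to be the real obstacle: knowing $\HH$ has a spanning linear subgraph already gives $\det K\neq0$ for generic $K$ (this alone yields $(1)\Leftrightarrow(4)$), but a unique \emph{positive} equilibrium additionally demands $(K^T)^{-1}\one>0$, and the point of perturbing off the permutation matrix $P$ — where both $\det P\neq0$ and $(P^T)^{-1}\one=\one>0$ hold transparently — is precisely to secure both conditions simultaneously; the remaining work is the routine check that $K_\ep$ has support exactly $E(\HH)$ and that the continuity claims are invoked at a point with nonzero determinant.
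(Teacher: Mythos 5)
Your proof is correct and follows essentially the same route as the paper's: the equivalence of (1) and (4) via the permutation-term expansion of $\det K$, and $(1)\Rightarrow(2)$ by perturbing the rate constants off the permutation matrix $P=A(\LL)$ of a spanning linear subgraph, where $\det P=\pm 1$ and $(P^T)^{-1}\one=\one>0$ make both the invertibility and the positivity of the solution of $K^Tz=\one$ survive for small $\ep$. The one place you add something is the explicit kernel-perturbation argument for $(3)\Rightarrow(4)$, a step the paper's own proof passes over quickly (it derives invertibility only from uniqueness in (2), leaving the implication out of (3) implicit).
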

\begin{proof}
Suppose that there is a $K$ such that $(\HH,K)$ has a unique positive equilibrium. Then clearly $(\HH,K)$ has an isolated positive equilibrium and the matrix $K$ is invertible, which implies that $\det(K) \ne 0$. If $\det(K) \ne 0$, then in particular there must be a nonzero term in the expansion of the determinant. Each nonzero term arises from a  spanning linear subgraph by classical results in theory of directed graphs (see \cite{harary1959graph,harary1962determinant}) and so $\HH$ has a  spanning linear subgraph. Suppose that $\HH$ has a  spanning linear subgraph, denote it by $\HH'$ and consider ${K_\ep}$ where all the rate constants along $\HH'$ are $1$, while all the rate constants along $\HH \setminus \HH'$ are $\ep$. Then $(\HH',K) = (\HH,K_0)$ is a union of disjoint cycles such that each vertex is in exactly one cycle, and each rate constant is 1. It is easy to check that $\det\left(K_0\right)=1$ and $(1/n,\ldots,1/n)^T$ is an eigenvector of $K_0^T$ . Since $\sum_{i=1}^n 1/n =1$, by Lemma \ref{lem1:invset}, $(1/n,\ldots,1/n)^T$ is the unique positive equilibrium of $(\HH,K_0)$. For sufficiently small $\ep$, $\det\left(K_\ep\right) >0$ and \eqref{eq:mateq} has a unique positive solution. This completes the proof. 
\end{proof}
\begin{remark}

Note that $\det(K) \ne 0$ does not guarantee that $(\HH,K)$ has a positive equilibrium because the unique solution to \eqref{eq:mateq} need not be positive. However, by Theorem \ref{thm:uniqueposeq}, the condition does guarantee that there exists a $K'$ such that  $(\HH,K')$ has a positive equilibrium. 
\end{remark}

\begin{example}
Consider the following $(\HH,K)$, where all unlabelled edges are assumed to have rate constant 1. Consider two  spanning linear subgraphs of $\HH$ (the one in the middle is depicted in red and the one on the right is depicted in brown), which we refer to as $\HH_1$ and $\HH_2$, respectively. $\HH_1$ has 0 even cycles, and so $\HH_1$ is even, while $\HH_2$ has 1 even cycle, and so $\HH_2$ is odd. 
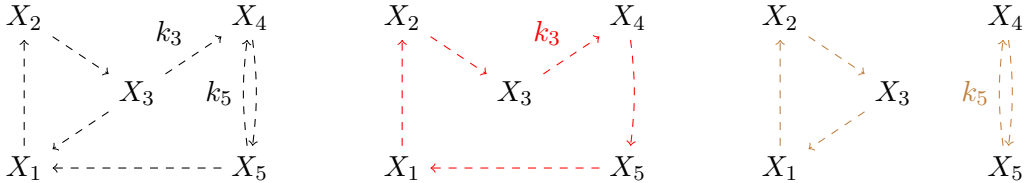
\begin{figure}[h!]
\begin{center}
  \begin{tikzpicture}[baseline={(current bounding box.center)}, scale=0.5]
   \node[state] (1)  at (-3,-2)  {$X_1$};
   \node[state] (2)  at (-3,2)  {$X_2$};
   \node[state] (3)  at (0,0)  {$X_3$};
   \node[state] (4)  at (3,2)  {$X_4$};
   \node[state] (5)  at (3,-2)  {$X_5$};

   \path[->,dashed]
    (3) edge[] node {} (1)
     (1) edge[] node {} (2)
    (2) edge[] node {} (3)
    (3) edge[] node {$k_3$} (4)
    (4) edge[bend left] node {} (5)
    (5) edge[bend left] node {$k_5$} (4)
    (5) edge[] node {} (1)
;
  \end{tikzpicture}
    \quad \quad \quad 
   \begin{tikzpicture}[baseline={(current bounding box.center)}, scale=0.5]
   \node[state] (1)  at (-3,-2)  {$X_1$};
   \node[state] (2)  at (-3,2)  {$X_2$};
   \node[state] (3)  at (0,0)  {$X_3$};
   \node[state] (4)  at (3,2)  {$X_4$};
   \node[state] (5)  at (3,-2)  {$X_5$};

   \path[->,dashed]
     (1) edge[red] node {} (2)
    (2) edge[red] node {} (3)
    (3) edge[red] node {$k_3$} (4)
    (4) edge[red,bend left] node {} (5)
    (5) edge[red] node {} (1)
;
  \end{tikzpicture}
  \quad \quad \quad 
\begin{tikzpicture}[baseline={(current bounding box.center)}, scale=0.5]
   \node[state] (1)  at (-3,-2)  {$X_1$};
   \node[state] (2)  at (-3,2)  {$X_2$};
   \node[state] (3)  at (0,0)  {$X_3$};
   \node[state] (4)  at (3,2)  {$X_4$};
   \node[state] (5)  at (3,-2)  {$X_5$};

   \path[->,dashed]
    (3) edge[brown] node {} (1)
     (1) edge[brown] node {} (2)
    (2) edge[brown] node {} (3)
    (4) edge[brown,bend left] node {} (5)
    (5) edge[brown,bend left] node {$k_5$} (4)
;
  \end{tikzpicture}
  \caption{On the left is $(\HH,K)$, where the unlabeled edges are assumed to have assigned a rate constant of 1. On the middle and the right are the two  spanning linear subgraphs of $\HH$, which we refer to as $\HH_1$ and $\HH_2$, respectively.}
  \end{center}
 \end{figure}
From results in \cite{harary1962determinant}, it follows that $\det(K |_{\HH_1})=k_3$ and $\det(K |_{\HH_2})=-k_5$, and so $\det(K)=k_3-k_5$. If $k_3 \ne k_5$, then $K$ is invertible and so there exists at most one positive equilibrium. In fact, we can explicitly solve \eqref{eq:mateq}, 
\[
\frac{1}{\bar \rho(x)} x = \left(1,1,\frac{k_5-1}{k_5-k_3},1,\frac{1-k_3}{k_5-k_3}\right)
\]
So, in fact, a unique positive equilibrium exists if and only if either $k_3<1<k_5$ or $k_5<1<k_3$. 
If $k_3=k_5$, the positive equilibria are solutions of $x_3+x_5=x_1=x_2=k_3(x_3+x_5)=x_4$ along with $\sum x_i =1$. If $k_3=k_5 \ne 1$, then there are no positive equilibria. If $k_3=k_5 = 1$, there is a  continuum of solutions parametrized by $b \in [0,1/4]$, 
\[
x= \left( \frac{1}{4}, \frac{1}{4},b, \frac{1}{4},\frac{1}{4} -b\right) \quad \mbox{where  ~~} 0 \le b \le \frac{1}{4}. 
\]
\end{example}

\begin{theorem} \label{thm:maxone}
If a hyperchain $\HH$ has at least one  spanning linear subgraph and if all its  spanning linear subgraphs have the same parities (either all even or all odd), then $(\HH,K)$ has at most one positive equilibrium for any $K>0$. 
\end{theorem}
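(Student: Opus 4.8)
The plan is to reduce the statement to the single claim that the hypotheses force $\det(K)\neq 0$ for every $K>0$. Granting this, $K$ is invertible, so the linear system $K^{T}z=\one$ has at most one solution; and since (by the discussion surrounding \eqref{eq:mateq} together with Lemma~\ref{lem1:invset}) the positive equilibria of $(\HH,K)$ are precisely the normalizations to $S_{n}$ of the \emph{positive} solutions of $K^{T}z=\one$, it follows that $(\HH,K)$ has at most one positive equilibrium. Thus everything reduces to a nonvanishing statement for the determinant.

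For that, I would expand $\det(K)$ over the spanning linear subgraphs of $\HH$. By the classical digraph reading of the Leibniz formula (already cited here, \cite{harary1959graph,harary1962determinant}), a permutation $\sigma\in S_{n}$ contributes a nonzero term $\operatorname{sgn}(\sigma)\prod_{i}K_{i,\sigma(i)}$ to $\det(K)$ exactly when $X_{i}\dashrightarrow X_{\sigma(i)}\in\HH$ for every $i$, that is, exactly when the edge set $\{X_{i}\dashrightarrow X_{\sigma(i)}:1\le i\le n\}$ is a spanning linear subgraph of $\HH$; this sets up a bijection between such permutations and the spanning linear subgraphs, under which I write $\sigma_{L}$ for the permutation matching a subgraph $L$. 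The disjoint cycles of $\sigma_{L}$ are exactly the cycles of the linear graph $L$, self-edges counting as length-$1$ cycles; if their lengths are $\ell_{1},\dots,\ell_{c}$ then $\operatorname{sgn}(\sigma_{L})=\prod_{j}(-1)^{\ell_{j}-1}=(-1)^{\sum_{j}(\ell_{j}-1)}$, and since $\ell_{j}-1$ is odd precisely when $\ell_{j}$ is even, the exponent is congruent mod $2$ to the number of even cycles of $L$. Hence $\operatorname{sgn}(\sigma_{L})=+1$ if $L$ is even and $-1$ if $L$ is odd, and writing $k_{e}$ for the weight of an edge $e$,
\[
\det(K)=\sum_{L\ \mathrm{even}}\ \prod_{e\in L}k_{e}\ -\ \sum_{L\ \mathrm{odd}}\ \prod_{e\in L}k_{e},
\]
the sums running over the spanning linear subgraphs $L$ of $\HH$.

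Now I invoke the hypotheses. Since every spanning linear subgraph of $\HH$ has the same parity, one of the two sums above is empty. Because $K>0$, each surviving product $\prod_{e\in L}k_{e}$ is strictly positive; and the surviving sum is nonempty because $\HH$ has at least one spanning linear subgraph. Therefore $\det(K)>0$ if all spanning linear subgraphs are even, and $\det(K)<0$ if all are odd; in either case $\det(K)\neq 0$, as required.

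I do not anticipate a genuine obstacle. The two points needing care are (i) quoting the digraph--determinant correspondence in exactly the form used here — nonzero Leibniz terms in bijection with spanning linear subgraphs, carrying the matching signs — and (ii) the elementary identity $\operatorname{sgn}(\sigma_{L})=(-1)^{(\text{number of even cycles of }L)}$, which is what reconciles the sign of a permutation with the paper's definition of an even (respectively odd) linear digraph. Granting these, the conclusion is merely the observation that a nonempty sum of strictly positive reals cannot vanish.
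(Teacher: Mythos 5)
Your proof is correct and follows essentially the same route as the paper's: both arguments reduce the claim to showing $\det(K)\neq 0$ via the Harary correspondence between nonzero Leibniz terms of $\det(K)$ and spanning linear subgraphs, with the sign of each term governed by the parity of the number of even cycles, so that equal parities force all surviving terms to share a sign and the (nonempty) sum cannot vanish. The only difference is that you re-derive the sign identity $\operatorname{sgn}(\sigma_{L})=(-1)^{\#\{\text{even cycles of }L\}}$ explicitly, whereas the paper simply cites Theorem 1 of Harary's determinant paper for both facts.
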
 
\begin{proof}
Since $\HH$ has a  spanning linear subgraph, $\det(K)$ has at least one nonzero term, and since all  spanning linear subgraphs have the same parities, all nonzero terms of $\det(K)$ have the same parities (either all negative or positive). Both of these conclusions follow from Theorem 1 of \cite{harary1962determinant}. This implies that $\det(K)\ne 0$ and so \eqref{eq:mateq} has at most one solution, from which the result follows.  
\end{proof}

\begin{corollary}
If a hyperchain $\HH$ has a unique  spanning linear subgraph, then $(\HH,K)$ has at most one positive equilibrium for any $K>0$. 
\end{corollary}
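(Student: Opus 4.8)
The plan is to deduce this directly from Theorem \ref{thm:maxone}, of which it is essentially a special case. First I would observe that the hypothesis ``$\HH$ has a unique spanning linear subgraph'' immediately guarantees the first hypothesis of Theorem \ref{thm:maxone}, namely that $\HH$ has at least one spanning linear subgraph. Second, I would note that the parity condition is satisfied vacuously: if there is only one spanning linear subgraph, then the set of all spanning linear subgraphs of $\HH$ is a singleton, so trivially ``all spanning linear subgraphs have the same parity'' holds (whether that common parity is even or odd is irrelevant). Having verified both hypotheses, I would invoke Theorem \ref{thm:maxone} to conclude that $(\HH,K)$ has at most one positive equilibrium for any $K > 0$.

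There is no real obstacle here; the content is entirely contained in Theorem \ref{thm:maxone}, and the corollary merely records the most transparent instance of it. If one wanted a self-contained argument rather than a one-line appeal, one could retrace the reasoning of Theorem \ref{thm:maxone}: by the classical expansion-of-the-determinant results of Harary (\cite{harary1962determinant}), each nonzero monomial in $\det(K)$ corresponds to a spanning linear subgraph of $\HH$, so with a unique such subgraph $\HH'$ the determinant reduces to a single (signed) monomial $\pm \prod_{e \in \HH'} k_e$, which is nonzero whenever $K > 0$; hence $K$ is invertible, \eqref{eq:mateq} has at most one solution $x$, and therefore $(\HH,K)$ has at most one positive equilibrium. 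Either route is short; I would present the first (citing Theorem \ref{thm:maxone}) and leave the expansion remark implicit.
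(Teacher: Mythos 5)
Your proposal is correct and matches the paper's proof, which likewise derives the corollary as an immediate consequence of Theorem \ref{thm:maxone}; your explicit remark that the parity hypothesis holds vacuously for a singleton set of spanning linear subgraphs just spells out what the paper leaves implicit. No issues.
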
 
\begin{proof}
This is a simple consequence of Theorem \ref{thm:maxone}
\end{proof}
\begin{example}
Consider the following three network types. Each wavy arrow represents some sequence of edges with an arbitrary number of intermediate species. 
\begin{equation*}
\begin{tikzpicture}[baseline={(current bounding box.center)}, scale=0.5]
   \node[state] (x1)  at (0,0)  {$X_1$};
   \node[state] (y1)  at (2,2)  {$Y_1$};
      \node[state] (ym)  at (2,-2)  {$Y_m$};
      \node[state] (x2)  at (-2,-2)  {$X_2$};
       \node[state] (xn)  at (-2,2)  {$X_n$};
            

   \path[->,dashed]
    (x1) edge[] node {} (y1)

    (ym) edge[] node {} (x1)
    (x1) edge[] node {} (x2)

    (xn) edge[] node {} (x1)

     (y1) edge[dashed,snake=snake] node {} (ym)
      (x2) edge[dashed,snake=snake] node {} (xn)
;
  \end{tikzpicture}
    \quad \quad \quad
  \begin{tikzpicture}[baseline={(current bounding box.center)}, scale=0.5]
   \node[state] (x1)  at (0,0)  {$X_1$};
   \node[state] (y1)  at (2,0)  {$Y_1$};
   \node[state] (y2)  at (4,2)  {$Y_2$};
      \node[state] (ym)  at (4,-2)  {$Y_m$};
      \node[state] (x2)  at (-2,-2)  {$X_2$};
       \node[state] (xn)  at (-2,2)  {$X_n$};
            

   \path[->,dashed]
    (x1) edge[] node {} (y1)
    (y1) edge[] node {} (y2)

    (ym) edge[] node {} (y1)
    (x1) edge[] node {} (x2)

    (xn) edge[] node {} (x1)

     (y2) edge[dashed,snake=snake] node {} (ym)
      (x2) edge[dashed,snake=snake] node {} (xn)
;
  \end{tikzpicture}
  \quad \quad \quad
\begin{tikzpicture}[baseline={(current bounding box.center)}, scale=0.5]
   \node[state] (x1)  at (0,0)  {$X_1$};
   \node[state] (y1)  at (4,0)  {$Y_1$};
   \node[state] (y2)  at (6,2)  {$Y_2$};
      \node[state] (ym)  at (6,-2)  {$Y_{m-1}$};
      \node[state] (x2)  at (-2,-2)  {$X_2$};
       \node[state] (xn)  at (-2,2)  {$X_n$};
        \node[state] (z)  at (2,0)  {$Z$};
            

   \path[->,dashed]
    (x1) edge[] node {} (z)
    (z) edge[] node {} (y1)
    (y1) edge[] node {} (y2)

    (ym) edge[] node {} (y1)
    (x1) edge[] node {} (x2)

    (xn) edge[] node {} (x1)

     (y2) edge[dashed,snake=snake] node {} (ym)
      (x2) edge[dashed,snake=snake] node {} (xn)
;
  \end{tikzpicture}
\end{equation*}
Only the middle network type has a  spanning linear subgraph and so only for this network type there is a choice of $K$ such that the resulting mass action system has a unique positive equilibrium. Furthermore, the middle network type has a unique  spanning linear subgraph, and so for every $K$, the mass action system has at most one positive equilibrium. 
\end{example}

\subsection{Boundary equilibria}
To find the boundary equilibria of $(\HH,K)$, we simply remove the nodes whose concentration is zero along with all its adjoining edges, and then consider the positive equilibria of the remaining network. We now make this idea precise. 
\begin{lemma} \label{lem:bdry}
Consider a hyperchain system $(\HH,K)$. Let $\BB = \{x_{i_1} = 0, \ldots, x_{i_\kappa} = 0\}$ be a boundary of $S_n$. Then $\BB$ is invariant under \eqref{eq:comp}. 
\end{lemma}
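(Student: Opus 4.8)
The plan is to exploit the multiplicative structure of the vector field. Equation \eqref{eq:comp} has the form $\dot x_i = x_i\, h_i(x)$ with $h_i(x) := \sum_{j=1}^n k_{ji} x_j - \ol\rho(x)$, and the crucial point is that a factor of $x_i$ appears in $\dot x_i$. The right-hand side is a polynomial vector field on $\R^n$, hence locally Lipschitz, so the initial value problem has a unique solution through every point; I will work on the maximal interval of existence of a fixed solution $x(t)$ with $x(0) \in \BB$.

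First I would fix an index $i$ and view $t \mapsto x_i(t)$ as the solution of the scalar linear non-autonomous ODE $\dot u = a_i(t)\, u$ with $a_i(t) := h_i(x(t))$; here $a_i$ is continuous because $x(\cdot)$ is continuous and $h_i$ is a polynomial. Integrating gives $x_i(t) = x_i(0)\,\exp\!\big(\int_0^t a_i(s)\,ds\big)$, so $x_i(t)$ has the same sign as $x_i(0)$ throughout the interval of existence. In particular, every coordinate that is zero at time $0$ stays zero, and every coordinate that is positive at time $0$ stays positive. Applying this to the indices $i \in \{i_1,\dots,i_\kappa\}$ shows $x_{i_1}(t) = \cdots = x_{i_\kappa}(t) = 0$ for all $t$, and applying it to the remaining indices shows $x(t)$ keeps all of its coordinates nonnegative.

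It then remains only to check that $x(t)$ stays on the simplex, i.e.\ $\sum_i x_i(t) = 1$. Summing \eqref{eq:comp} and using $\sum_{i,j} k_{ji} x_i x_j = \ol\rho(x)$ gives $\frac{d}{dt}\sum_i x_i = \ol\rho(x)\big(1 - \sum_i x_i\big)$; since $\sum_i x_i(0) = 1$ and $w \equiv 1$ solves the scalar ODE $\dot w = \ol\rho(x(t))\,(1-w)$, uniqueness forces $\sum_i x_i(t) = 1$ for all $t$. Combining the three points, $x(t) \in S_n \cap \{x_{i_1} = 0, \dots, x_{i_\kappa} = 0\} = \BB$, which is the claim (and, as a byproduct, any solution starting in $S_n$ stays in the compact set $S_n$ and hence exists for all time). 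There is no genuine obstacle here; the only points requiring a word of care are invoking uniqueness for the scalar equations and phrasing everything on the maximal interval of existence rather than presupposing global solutions.
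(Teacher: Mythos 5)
Your proof is correct and rests on the same observation as the paper's: the factor of $x_i$ in $\dot x_i$ forces $\dot x_i = 0$ whenever $x_i = 0$. The paper states exactly this in one line and stops; your integrating-factor representation $x_i(t) = x_i(0)\exp\bigl(\int_0^t a_i(s)\,ds\bigr)$ and the check that $\sum_i x_i \equiv 1$ simply make that one-liner fully rigorous.
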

\begin{proof}
It is clear from \eqref{eq:comp} that if $x_i=0$ then $\dot x_i =0$. 
\end{proof}
Consider a (labeled or unlabeled) graph $G=(V,E)$ and let $V' \subseteq V$. The graph {\em induced} by $V'$ is $G[V'] = (V',E')$ where $E' =\{(\cdot, v), (v,\cdot) | v \in V'\}$. Let $\II$ be a nonempty subset of $\{1, \ldots, n\}$. If $\HH$ is a hyperchain on the $n$ species, $\{X_1, \ldots, X_n\}$, then define the subset of species $X_\II = \{X_i : i \in \II \}$. Clearly, the induced graph $\HH[X_{\II}]$ is a hyperchain, and the induced labeled graph $(\HH,K)[X_\II]$ is a hyperchain system.

\begin{lemma} \label{lem:notational}
Consider a hyperchain system $(\HH,K)$ with the associated dynamical system $\dot x = g(x) = (g_1(x), \ldots, g_n(x))$. Let $\II$ be a nonempty subset of $\{1, \ldots, n\}$ and $\II^c$ its complement. Consider a boundary of $S_n$ defined by $\BB_\II := \{x ~|~ x_i = 0  \iff i \in \II\}$. Denote the dynamical system associated with $(\HH,K)[X_{\II^c}]$ by $\dot {\hat x} = \hat g(\hat x)$. Suppose that $\hat x=(\hat x_i: i \in \II^c)$ is in the state space of $(\HH,K)[X_{\II^c}]$. We define $x = (x_1, \ldots, x_n)$ in the state space of $(\HH,K)$ as follows
\[
x_i = \begin{cases} \hat x_i &\quad \quad \mbox{ if } i \in \II^c \\ 
0 &\quad \quad \mbox{ if } i \in \II
\end{cases}
\]
Then, $\hat x$ is a positive point in the state space of $\left(\HH,K\right)[X_{\II^c}]$ if and only if $x \in \BB_\II$. 
\end{lemma}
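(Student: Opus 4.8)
The plan is simply to unwind the definitions of the two state spaces and of $\BB_\II$; the statement about the dynamical systems $g$ and $\hat g$ in the hypothesis is not needed for this lemma (it is recorded for use in the sequel), since the claim concerns only points of the state spaces. By the construction of the relative-concentration system in Section \ref{sec:relativeconcentration}, the state space of a hyperchain system on a set of $m$ species is the standard simplex in those $m$ coordinates; in particular the state space of $(\HH,K)[X_{\II^c}]$ is $\{\hat x = (\hat x_i : i \in \II^c) : \hat x_i \ge 0,\ \sum_{i \in \II^c} \hat x_i = 1\}$, and a \emph{positive} point of this state space is one with $\hat x_i > 0$ for every $i \in \II^c$.

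First I would prove the forward implication. Suppose $\hat x$ is a positive point of the state space of $(\HH,K)[X_{\II^c}]$, so $\hat x_i > 0$ for all $i \in \II^c$ and $\sum_{i \in \II^c} \hat x_i = 1$. Then the point $x$ defined in the statement satisfies $x_i = 0$ for $i \in \II$, $x_i = \hat x_i > 0$ for $i \in \II^c$, and $\sum_{i=1}^n x_i = \sum_{i \in \II^c} \hat x_i = 1$, so $x \in S_n$; moreover $x_i = 0$ precisely when $i \in \II$, which is exactly the condition defining $\BB_\II$. For the converse, suppose $x \in \BB_\II$, i.e. $x \in S_n$ with $x_i = 0 \iff i \in \II$. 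Since the coordinates indexed by $\II$ vanish, $\sum_{i \in \II^c} x_i = \sum_{i=1}^n x_i = 1$, and $x_i > 0$ for every $i \in \II^c$ (as $i \notin \II$); hence $\hat x = (x_i : i \in \II^c)$ lies in the state space of $(\HH,K)[X_{\II^c}]$ and is a positive point of it. Combined with the definition of $x$ in the statement, which forces the $\II$-coordinates to be zero and agrees with $\hat x$ on $\II^c$, this yields the equivalence.

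There is no real obstacle here; the only point deserving care is that the normalization constraint $\sum_{i=1}^n x_i = 1$ cutting out the simplex $S_n$ is preserved under the embedding that appends zeros on the coordinates in $\II$, so that the embedded point genuinely lies in $S_n$ rather than merely in $\R^n_{\ge 0}$. The lemma is a bookkeeping device recording that the boundary face $\BB_\II$ of $S_n$ is, as a set, identified with the (positive part of the) state space of the induced sub-hyperchain $(\HH,K)[X_{\II^c}]$; together with Lemma \ref{lem:bdry} it sets up the reduction of boundary equilibria of $(\HH,K)$ to positive equilibria of smaller hyperchains.
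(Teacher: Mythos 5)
Your proof is correct and takes the same route as the paper, which simply states ``The proof is immediate'': the lemma is pure definition-unwinding, and your write-up just spells out the bookkeeping (positivity of the $\II^c$-coordinates versus the vanishing condition defining $\BB_\II$, and preservation of the normalization $\sum_i x_i = 1$ under appending zeros) that the paper leaves implicit.
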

\begin{proof}
The proof is immediate. 
\end{proof}

\begin{theorem}
Consider a hyperchain system $(\HH,K)$ and let $\II$ be a nonempty subset of $\{1, \ldots, n\}$. In the notation of Lemma \ref{lem:notational}, $\hat x$ is a positive equilibrium of $\left(\HH,K\right)[X_{\II^c}]$ if and only if $x$ is an equilibrium of $(\HH,K)$ in $\BB_\II$. 
\end{theorem}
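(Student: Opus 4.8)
The plan is to reduce the statement to a direct comparison between \eqref{eq:comp}, written for the full system, and the positive-equilibrium characterization \eqref{eq:poseq}, written for the induced system; the argument is then pure bookkeeping, much in the spirit of Lemma~\ref{lem:notational}.

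First I would record that a point $x \in \BB_\II$ is an equilibrium of $(\HH,K)$ if and only if $\dot x_i = 0$ for every index $i \in \II^c$. Indeed, $x$ is an equilibrium iff $\dot x_i = 0$ for all $i$; and for $i \in \II$ we have $x_i = 0$, so \eqref{eq:comp} forces $\dot x_i = x_i\bigl(\sum_j k_{ji}x_j - \ol\rho(x)\bigr) = 0$ automatically --- this is precisely the invariance of $\BB_\II$ established in Lemma~\ref{lem:bdry}. Hence only the indices in $\II^c$ carry any information.

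Next I would fix $i \in \II^c$ and simplify $\dot x_i$ using that $x_j = 0$ for all $j \in \II$. The linear term collapses, $\sum_{j=1}^n k_{ji} x_j = \sum_{j \in \II^c} k_{ji} \hat x_j$, and the right-hand side is exactly $\hat f_i(\hat x)$, the $i$-th ``$f$''-function of the dynamical system $\dot{\hat x} = \hat g(\hat x)$ attached to $(\HH,K)[X_{\II^c}]$, because the rate-constant matrix of that system is the principal submatrix $K|_{\HH[X_{\II^c}]}$ of $K$ on the index set $\II^c$ (recall $k_{ji} = 0$ whenever $X_j \dashrightarrow X_i$ is absent). Likewise the quadratic form collapses, $\ol\rho(x) = \sum_{j,\ell} k_{j\ell} x_j x_\ell = \sum_{j,\ell \in \II^c} k_{j\ell} \hat x_j \hat x_\ell$, which is the analogous $\ol\rho$ of $(\HH,K)[X_{\II^c}]$ evaluated at $\hat x$. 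Since ``$\hat x$ positive'' means $x_i = \hat x_i > 0$ for each $i \in \II^c$, the equation $\dot x_i = 0$ is equivalent to $\hat f_i(\hat x) = \ol\rho(\hat x)$.

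Finally I would assemble these equivalences: $x \in \BB_\II$ is an equilibrium of $(\HH,K)$ if and only if $\hat f_i(\hat x) = \ol\rho(\hat x)$ holds for every $i \in \II^c$, which by \eqref{eq:poseq} applied to the hyperchain system $(\HH,K)[X_{\II^c}]$ is exactly the assertion that $\hat x$ is a positive equilibrium of $(\HH,K)[X_{\II^c}]$; and Lemma~\ref{lem:notational} trades ``$\hat x$ positive'' for ``$x \in \BB_\II$''. I do not anticipate a genuine obstacle here. The only point needing care is the verification that every monomial omitted in passing from $f_i(x)$ and $\ol\rho(x)$ to their induced-system counterparts carries a factor $x_j$ with $j \in \II$, so that those omissions are harmless --- together with, if one wants to be scrupulous, the observation that $(\HH,K)[X_{\II^c}]$ is again a legitimate hyperchain system, so that \eqref{eq:poseq} genuinely applies to it.
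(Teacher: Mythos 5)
Your proposal is correct and takes essentially the same route as the paper: both use Lemma~\ref{lem:bdry} to dispose of the indices in $\II$ and then observe that, for $i \in \II^c$ and $x \in \BB_\II$, every monomial dropped in passing from $f_i(x)$ and $\ol\rho(x)$ to their induced-system counterparts carries a factor $x_j$ with $j \in \II$, so that $g_i(x) = \hat g_i(\hat x)$. The only cosmetic difference is that you phrase the conclusion through the positive-equilibrium characterization \eqref{eq:poseq} for $(\HH,K)[X_{\II^c}]$, whereas the paper identifies the two vector fields directly and lets the equivalence follow at once.
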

\begin{proof}
By Lemma \ref{lem:bdry}, $\BB_\II$ is invariant for $(\HH,K)$, i.e. for $i \in \II$,  $x_i = g_i(x) = 0$. For any $i \in \II^c$ and $x \in \BB_\II$,  
\begin{align*}
g_i(x)  &= x_i \left( \sum_{j: X_j \dasharrow X_i \in \HH} k_{ji} x_j - \sum_{i,j: X_j \dasharrow X_i \in \HH} k_{ji}   x_i  x_j \right) \\
&=   x_i \left( \sum_{j: X_j \dasharrow X_i \in \HH[X_{\II^c}]} k_{ji}   x_j - \sum_{i,j: X_j \dasharrow X_i \in \HH[X_{\II^c}]} k_{ji}   x_i  x_j \right) \\
&=   \hat x_i \left( \sum_{j: X_j \dasharrow X_i \in \HH[X_{\II^c}]} k_{ji}   \hat x_j - \sum_{i,j: X_j \dasharrow X_i \in \HH[X_{\II^c}]} k_{ji}   \hat x_i  \hat x_j \right) = \hat g_i(\hat x). 
\end{align*}
The result immediately follows. 
\end{proof}

\subsection{Stability of Positive Equilibria}

Let $\dot x = g(x) = x \ast \left( f(x) - (x \cdot f) \one \right)$ be the dynamical system for $(\HH,K)$. 
We saw in Section \ref{sec:poseq} that a unique positive equilibrium exists if and only if $K$ is invertible and the unique solution of the equation $K^T z = \one$ is positive. We wish to establish the conditions under which the unique positive equilibrium (when it exists) is stable. 

For $z \in \R^n$, by $\diag(z)$ we mean the unique diagonal matrix whose diagonal entries form the vector $z$. 
We now state the main theorem of this section, and devote the rest of the section to proving it along with a few ancillary results.

\begin{theorem} \label{thm:main_stab}
A hyperchain system $(\HH,K)$ on $n$ species has a unique, positive linearly stable equilibrium if and only if (i) $K$ is invertible, (ii) $(K^T)^{-1} \one \in \R^n_{> 0}$, and (iii) $\diag\left((K^T)^{-1} \one \right) K^T$ has exactly $n-1$ eigenvalues with negative real part. 
\end{theorem}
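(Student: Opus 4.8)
The plan is to split off the equilibrium-counting conditions and then reduce linear stability to a rank-one perturbation identity for characteristic polynomials. By the discussion in Section~\ref{sec:poseq} (the positive equilibria of $(\HH,K)$ are exactly the positive scalar multiples of positive solutions of $K^{T}z=\one$, together with Lemma~\ref{lem1:invset}), the system $(\HH,K)$ has a \emph{unique} positive equilibrium if and only if (i) holds and the unique solution $z$ of $K^{T}z=\one$ is positive, i.e.\ (ii) holds. In that case $z=(K^{T})^{-1}\one$, $\bar z:=\one^{T}z>0$, the equilibrium is $x^{*}=z/\bar z$, and $f(x^{*})=K^{T}x^{*}=\bar z^{-1}\one$, so $c:=\bar\rho(x^{*})=x^{*}\cdot f(x^{*})=1/\bar z>0$. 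Consequently both directions of the claimed equivalence reduce, once (i) and (ii) are assumed, to showing: $x^{*}$ is linearly stable $\iff$ (iii).

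First I would linearize. Writing $g(x)=x\ast(f(x)-(x\cdot f)\one)$ and using $f(x^{*})=c\one$ together with $\partial_{x_{k}}(x^{T}K^{T}x)=(K^{T}x)_{k}+(Kx)_{k}$, the diagonal contributions cancel at $x^{*}$ and the Jacobian is
\begin{equation*}
J \;=\; \diag(x^{*})\bigl(K^{T}-\one\,w^{T}\bigr)\;=\;\tfrac{1}{\bar z}\bigl(B-z\,w^{T}\bigr),\qquad w:=c\one+Kx^{*},
\end{equation*}
where $B:=\diag(z)K^{T}=\diag\bigl((K^{T})^{-1}\one\bigr)K^{T}$ is precisely the matrix appearing in (iii). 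Since the flow lies on $S_{n}$ and $x^{*}$ is in its relative interior, linear stability of $x^{*}$ means exactly that $J$ restricted to the tangent hyperplane $T_{0}:=\{v:\one^{T}v=0\}$ has all eigenvalues with negative real part. A short computation gives $\one^{T}J=(x^{*})^{T}K^{T}-w^{T}=-c\,\one^{T}$, so $T_{0}$ is $J$-invariant and $J$ acts on $\R^{n}/T_{0}$ as multiplication by $-c<0$; hence $\chi_{J}(\lambda)=(\lambda+c)\,\chi_{J|_{T_{0}}}(\lambda)$.

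The heart of the argument is to compute $\chi_{J}$ from $\chi_{B}$ by the matrix determinant lemma. Because $K^{T}z=\one$ one has $Bz=z$, so $1$ is always an eigenvalue of $B$ with eigenvector $z$ and $(\mu-1)\mid\chi_{B}(\mu)$; moreover $(\bar z\lambda I-B)^{-1}z=(\bar z\lambda-1)^{-1}z$ off a finite set of $\lambda$. A direct evaluation using $K^{T}z=\one$, $\one^{T}x^{*}=1$, and $c\bar z=1$ gives $w^{T}z=c\bar z+(x^{*})^{T}K^{T}z=c\bar z+1=2$. Therefore, by the matrix determinant lemma,
\begin{equation*}
\chi_{J}(\lambda)=\bar z^{-n}\det\!\bigl(\bar z\lambda I-B+z\,w^{T}\bigr)=\bar z^{-n}\,\chi_{B}(\bar z\lambda)\cdot\frac{\bar z\lambda+1}{\bar z\lambda-1},
\end{equation*}
and, comparing with $\chi_{J}(\lambda)=\bar z^{-1}(\bar z\lambda+1)\,\chi_{J|_{T_{0}}}(\lambda)$, we obtain $\chi_{J|_{T_{0}}}(\lambda)=\bar z^{-(n-1)}\,\chi_{B}(\bar z\lambda)/(\bar z\lambda-1)$. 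Thus the $n-1$ eigenvalues of $J|_{T_{0}}$ are $\bar z^{-1}$ times the $n-1$ roots of $\chi_{B}(\mu)/(\mu-1)$, i.e.\ the eigenvalues of $B$ other than the distinguished eigenvalue $1$. Since $\bar z>0$ preserves signs of real parts and the eigenvalue $1$ has positive real part, $J|_{T_{0}}$ has all eigenvalues with negative real part $\iff$ $B$ has exactly $n-1$ eigenvalues with negative real part, which is (iii).

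The step I expect to be the main obstacle is bookkeeping rather than any conceptual difficulty: getting the Jacobian right for a non-symmetric $K$ (the two cross-terms $(K^{T}x)_{k}$ and $(Kx)_{k}$), verifying the invariance relation $\one^{T}J=-c\,\one^{T}$, and --- in the final equivalence --- making sure the multiplicity of the eigenvalue $1$ of $B$ is exactly one whenever (iii) holds, so that ``exactly $n-1$ eigenvalues with negative real part'' genuinely forces the leftover eigenvalue to be the $z$-eigenvalue and not an eigenvalue sitting on the imaginary axis.
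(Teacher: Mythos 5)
Your proof is correct and follows essentially the same route as the paper: both reduce (i)–(ii) to the unique-positive-equilibrium characterization and then exploit the fact that the Jacobian at the equilibrium is a rank-one perturbation of $\diag(z)K^T$ whose distinguished eigenvalue $z^TKz$ (eigenvector $z$) flips sign while the remaining spectrum is untouched. The only cosmetic differences are that you justify the eigenvalue shift via the matrix determinant lemma where the paper cites a lemma of Ding and Zhou, and that you phrase stability via the restriction of $J$ to the tangent hyperplane $\{\one^T v=0\}$ while the paper uses all $n$ eigenvalues of the full Jacobian (equivalent here, since the transverse eigenvalue $-z^TKz$ is automatically negative).
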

The proof is deferred to the end of the section.

\begin{proposition}
For $x \in S_n$, consider the dynamical system,  
$\dot x = g(x) = x \ast \left( K^T x - \left( x^T K^T x \right) \one \right)$. The Jacobian matrix of $g(x)$ at $z \in \R^n$ is 
\begin{equation} \label{eq:jac1}
D_x g|_{x=z} = \diag\left(K^T z - (z^T K z) \one\right) + \diag(z) K^T - z z^T \left(K + K^T\right). 
\end{equation}
\end{proposition}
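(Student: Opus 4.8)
The plan is to compute the Jacobian of the vector field $g(x) = x \ast \left( K^T x - (x^T K^T x)\one \right)$ componentwise and then reassemble the result into matrix form. Write $g_i(x) = x_i \left( (K^T x)_i - \beta(x) \right)$ where $\beta(x) := x^T K^T x = \sum_{k,\ell} k_{k\ell} x_k x_\ell$ is the scalar $\bar\rho(x)$. The strategy is the product rule: $\partial g_i / \partial x_j = \delta_{ij}\left( (K^T x)_i - \beta(x) \right) + x_i \, \partial_j\!\left( (K^T x)_i - \beta(x) \right)$.

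First I would handle the two pieces of the inner derivative. Since $(K^T x)_i = \sum_\ell (K^T)_{i\ell} x_\ell = \sum_\ell k_{\ell i} x_\ell$, we get $\partial_j (K^T x)_i = (K^T)_{ij}$, which contributes $x_i (K^T)_{ij}$, i.e. the matrix $\diag(x) K^T$. For the quadratic form, $\partial_j \beta(x) = \sum_\ell (k_{j\ell} + k_{\ell j}) x_\ell = \left( (K + K^T) x \right)_j$, so $-x_i \partial_j \beta(x) = -x_i \left( (K+K^T)x\right)_j$, which is the $(i,j)$ entry of the rank-one matrix $-\, x \, \left( (K+K^T) x\right)^T = -\, x \, x^T (K+K^T)$ (using symmetry of $K+K^T$ to move the transpose). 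The Kronecker-delta term contributes the diagonal matrix $\diag\!\left( K^T x - \beta(x)\one \right)$. Assembling and evaluating at $x = z$ gives exactly \eqref{eq:jac1}.

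The only mild subtlety, which I would state carefully rather than gloss over, is the symmetrization in the derivative of the quadratic form: because $\beta(x) = x^T K^T x = x^T K x$ is a genuine quadratic form in $x$ even though $K$ need not be symmetric, differentiating produces the symmetric combination $(K + K^T)x$, not $K^T x$ alone; this is where the factor $\left(K + K^T\right)$ in the last term comes from. There is no real obstacle here — the proof is a routine application of the product and chain rules — so I expect the write-up to be short, with the ``hard part'' being merely bookkeeping to make sure each of the three summands lands in the right matrix form (diagonal, $\diag(z)K^T$, and rank-one $z z^T(K+K^T)$) and that the transpose placement in the rank-one term is justified by the symmetry of $K+K^T$.
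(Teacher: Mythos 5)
Your proposal is correct and follows essentially the same route as the paper: a componentwise product-rule computation with $\partial_j (K^Tx)_i = k_{ji}$ and $\partial_j(x^TKx) = ((K+K^T)x)_j$, reassembled into the diagonal, $\diag(z)K^T$, and rank-one pieces. The symmetry remark justifying the transpose placement in $-zz^T(K+K^T)$ is a fine (and correct) addition, though the paper passes over it silently.
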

\begin{proof}
In components, 
$
g_i(x) = x_i \left( f_i(x) - \ol\rho (x)\right)
$
where $f_i(x) := \sum_{j=1}^n k_{ji} x_j$ and $\bar \rho(x) := x \cdot f(x)$. Differentiating with respect to $x_j$, 
\begin{align*}
\pdv{g_i(x)}{x_j} = \delta_{ij} (f_i(x) - \bar \rho (x)) + x_i \left( \pdv{f_i(x)}{x_j} - \pdv{\bar \rho(x)}{x_j}  \right)
\end{align*}
where 
\[
\pdv{f_i(x)}{x_j} = \pdv{}{x_j} \sum_{m=1}^n k_{mi} x_m = \sum_{m=1}^n k_{mi} \delta_{mj} = k_{ji} 
\]
and 
\[
\pdv{\bar \rho(x)}{x_j} = \pdv{}{x_j} \sum_{i=1}^n \sum_{m=1}^n k_{mi} x_m x_i = \sum_{i=1}^n \left( k_{ij} + k_{ji} \right) x_i. 
\]
Putting these together, we get the $(i,j)$th entry of the Jacobian matrix, 
\begin{equation} \label{eq:jac_comp}
\pdv{g_i(x)}{x_j} = \delta_{ij} (f_i(x) - \bar \rho (x)) + k_{ji} x_i - x_i \sum_{m=1}^n \left( k_{mj} + k_{jm} \right) x_m. 
\end{equation}
In matrix notation
\begin{align*}
D_x g|_{x=z} &= \diag\left(f(z) - \bar \rho (z)\one\right) + \diag(z) K^T - z z^T \left(K + K^T\right), \\
 &= \diag\left(K^T z - (z^T K z) \one\right) + \diag(z) K^T - z z^T \left(K + K^T\right), 
\end{align*}
as desired.
\end{proof}

\begin{corollary}
For $x \in S_n$, consider the dynamical system,  
$\dot x = g(x) = x \ast \left( K^T x - \left( x^T K^T x \right) \one \right)$. The Jacobian matrix of $g(x)$ at a positive equilibrium $z \in \R^n_{>0}$ is given by 
\begin{equation} 
D_x g|_{x=z} = \diag(z) K^T - z z^T \left(K + K^T\right). 
\end{equation}
\end{corollary}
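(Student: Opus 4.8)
The plan is to specialize the general Jacobian formula from the preceding Proposition to a positive equilibrium. Recall that for an arbitrary point $z \in \R^n$,
\[
D_x g|_{x=z} = \diag\left(K^T z - (z^T K z)\one\right) + \diag(z) K^T - z z^T\left(K + K^T\right).
\]
The last two terms are already exactly what the corollary claims, so the entire content of the proof is to show that the first term, $\diag\left(K^T z - (z^T K z)\one\right)$, is the zero matrix whenever $z$ is a positive equilibrium.

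Next I would invoke the characterization of positive equilibria from Section \ref{sec:poseq}: by \eqref{eq:poseq} (equivalently \eqref{eq:mateq}), a positive equilibrium $z \in \R^n_{>0}$ of the relative concentration system \eqref{eq:comp} satisfies $f_1(z) = \cdots = f_n(z) = \bar\rho(z)$, i.e. $K^T z = (z^T K^T z)\,\one$. Since $z^T K^T z = z^T K z$ as scalars (a scalar equals its transpose), this is precisely $K^T z - (z^T K z)\one = 0$. Hence $\diag\left(K^T z - (z^T K z)\one\right) = \diag(0) = 0$, and substituting into the displayed formula yields
\[
D_x g|_{x=z} = \diag(z) K^T - z z^T\left(K + K^T\right),
\]
as desired.

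There is no genuine obstacle here; this is an immediate corollary obtained by deleting the single term that is forced to vanish on the equilibrium set. The only bookkeeping point worth stating explicitly is that ``$z$ is a positive equilibrium'' of the simplex-restricted system is exactly the relation $K^T z = (z^T K z)\one$ — the normalization $z\cdot\one = 1$ being automatic by Lemma \ref{lem1:invset} — so that no additional hypothesis is concealed in the word ``equilibrium.''
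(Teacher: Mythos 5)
Your proposal is correct and follows exactly the paper's own argument: substitute the equilibrium relation $K^T z = (z^T K z)\one$ into the general Jacobian formula \eqref{eq:jac1} so that the first diagonal term vanishes. The extra remarks about $z^T K^T z = z^T K z$ and the normalization being automatic are harmless elaborations of the same one-line proof.
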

\begin{proof}
Positive equilibria are solutions of $K^T x = \left(x^T K x\right) \one$, 
and so at a positive equilibrium $z$, the first term in \eqref{eq:jac1} is 0. 
\end{proof}
The following result is Theorem 2.1 in Ding \& Zhou \cite{ding2007eigenvalues}. 
\begin{lemma} \label{lem:ding}
Suppose that $M \in \C^{n \times n}$ has eigenvalues $(\lambda_1, \ldots, \lambda_n)$. Let $u \in \C^n$ be an eigenvector of $M$ with the associated eigenvalue $\lambda_1$, and let $v \in \C^n$. Then (i) the eigenvalues of $M+ uv^T$ are $(\lambda_1 + v^Tu, \lambda_2, \ldots, \lambda_n)$, and (ii) $u$ is an eigenvector of $M+ uv^T$ corresponding to the eigenvalue $\lambda_1 + v^Tu$. 
\end{lemma}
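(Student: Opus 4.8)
The plan is to treat the two parts separately: part (ii) is an immediate computation, and part (i) is the substantive claim, which I would establish by a similarity reduction to block upper triangular form. For part (ii), I would simply note that $(M + uv^T)u = Mu + u(v^T u) = \lambda_1 u + (v^T u)u = (\lambda_1 + v^T u)u$, using $Mu = \lambda_1 u$ and that $v^T u$ is a scalar; since $u \neq 0$ (being an eigenvector), this shows $u$ is an eigenvector of $M + uv^T$ with eigenvalue $\lambda_1 + v^T u$.

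For part (i), the key step is to change basis so that $u$ becomes the first coordinate direction. Since $u \neq 0$, I would extend $u$ to a basis of $\C^n$ and let $P = [\,u \mid p_2 \mid \cdots \mid p_n\,]$ be the invertible matrix with these columns, so that $P^{-1}u = e_1$. Because $Mu = \lambda_1 u$, the first column of $P^{-1}MP$ equals $P^{-1}(\lambda_1 u) = \lambda_1 e_1$, so $P^{-1}MP$ is block upper triangular,
\[
P^{-1}MP = \begin{pmatrix} \lambda_1 & b^T \\ 0 & M' \end{pmatrix},
\]
for some $b \in \C^{n-1}$ and $M' \in \C^{(n-1)\times(n-1)}$. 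Comparing characteristic polynomials gives $\det(zI - M) = (z - \lambda_1)\det(zI - M')$, so that $M'$ carries exactly the eigenvalues $\lambda_2, \ldots, \lambda_n$ (with multiplicity).

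I would then apply the \emph{same} similarity to the perturbed matrix. Setting $w^T := v^T P$, whose first entry is $w_1 = v^T u$, and using $P^{-1}u = e_1$, I would obtain
\[
P^{-1}(M + uv^T)P = P^{-1}MP + (P^{-1}u)(v^T P) = \begin{pmatrix} \lambda_1 & b^T \\ 0 & M' \end{pmatrix} + e_1 w^T.
\]
The rank-one term $e_1 w^T$ has $w^T$ as its first row and zeros in all other rows, so it modifies only the top block row; in particular it adds $w_1 = v^T u$ to the $(1,1)$ entry and leaves the lower-left zero block intact. Thus $P^{-1}(M + uv^T)P$ is again block upper triangular with $(1,1)$ entry $\lambda_1 + v^T u$ and lower block $M'$, so its spectrum is $\lambda_1 + v^T u$ together with $\lambda_2, \ldots, \lambda_n$. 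Since similarity preserves eigenvalues, this is exactly the claimed spectrum of $M + uv^T$.

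The only delicate point is the bookkeeping with multiplicities --- justifying that $M'$ carries precisely the multiset $\{\lambda_2, \ldots, \lambda_n\}$ --- but this is immediate from the factorization of $\det(zI - M)$ above. As a cross-check, a determinant argument gives the same conclusion without constructing a basis: for $z$ outside the spectrum of $M$, the matrix determinant lemma yields $\det(zI - M - uv^T) = \det(zI - M)\,(1 - v^T(zI - M)^{-1}u)$, and since $(zI - M)^{-1}u = (z - \lambda_1)^{-1}u$ this reduces to $(z - \lambda_1 - v^T u)\prod_{i=2}^n (z - \lambda_i)$; both sides are polynomials in $z$ agreeing off a finite set, hence equal everywhere. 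I would present the similarity argument as primary, since it handles degenerate spectra transparently.
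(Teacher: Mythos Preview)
Your proof is correct. Part (ii) is exactly the computation the paper gives. For part (i), the paper does not supply an argument at all---it simply cites Theorem 2.1 of Ding \& Zhou---whereas you give a self-contained proof via similarity to block upper-triangular form (with the matrix determinant lemma as a cross-check). Both of your arguments are standard and valid; the similarity argument is essentially how one would prove the Ding--Zhou result from scratch, so your write-up is strictly more informative than the paper's citation.
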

\begin{proof}
The first part is proved in Theorem 2.1 of Ding \& Zhou. For the second part, $(M + uv^T) u = Mu + u (v^Tu) = (\lambda_1 + v^Tu) u$. 
\end{proof}

\begin{lemma} \label{lem:ev}
If $z$ is a positive equilibrium of $(\HH,K)$, then $z$ is an eigenvector of the matrix $M = \diag(z) K^T$ with the associated eigenvalue $z^TK z$. 
\end{lemma}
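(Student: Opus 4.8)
The plan is to verify the claimed eigenvalue relation by direct computation, using the defining equation for a positive equilibrium. Recall from Section \ref{sec:poseq} that a positive equilibrium $z \in \R^n_{>0}$ of $(\HH,K)$ satisfies $K^T z = \left(z^T K z\right) \one$, i.e. $f(z) = \bar\rho(z) \one$ where $\bar\rho(z) = z^T K z$. The matrix in question is $M = \diag(z) K^T$, so I would simply apply $M$ to the vector $z$ and substitute the equilibrium condition.

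The key step: compute $Mz = \diag(z)\, K^T z$. By the equilibrium equation $K^T z = (z^T K z)\one$, so $Mz = \diag(z)\, (z^T K z)\one = (z^T K z)\,\diag(z)\one = (z^T K z)\, z$, since $\diag(z)\one = z$. Hence $z$ is an eigenvector of $M$ with eigenvalue $z^T K z$, as claimed.

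There is essentially no obstacle here — the statement is an immediate consequence of the positive-equilibrium characterization already established, together with the trivial identity $\diag(z)\one = z$ and the fact that $z^T K z$ is a scalar that pulls out of the matrix product. The only thing to be careful about is keeping the roles of $K$ and $K^T$ straight and noting that $z^T K z = z^T K^T z$ (both equal $\bar\rho(z)$), so that the eigenvalue can be written in either form consistently with the earlier notation. I would present this as a two-line proof.
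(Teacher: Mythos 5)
Your proof is correct and is essentially identical to the paper's: both apply $M = \diag(z)K^T$ to $z$, substitute the equilibrium condition $K^T z = (z^T K z)\one$, and use $\diag(z)\one = z$ to conclude. Nothing further is needed.
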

\begin{proof}
We use the positive equilibrium condition $K^T z = \left(z^T K z\right) \one$,
\begin{align*}
Mz & = \left(\diag(z) K^T\right) z =  \diag(z) \left(K^T z\right) =  \diag(z) \left(\left(z^T K z\right) \one\right) \\
& =  \left(z^T K z\right) \diag(z)  \one =   \left(z^T K z\right) z,
\end{align*}
which completes the proof.
\end{proof}

\begin{theorem} \label{thm:eigD}
Let $z$ be a positive equilibrium of $(\HH,K)$. Let the eigenvalues of $\diag(z) K^T$ be $\left( \lambda_1 := z^T K z, \lambda_2, \ldots, \lambda_n \right)$. Then the following hold:
\been[(i)]
\item  the eigenvalues of $D_x g(z)$ are $\left( -\lambda_1, \lambda_2, \ldots, \lambda_n \right)$, and 
\item $z$ is an eigenvector of $D_x g(z)$ corresponding to the eigenvalue $-\lambda_1 = - z^T K z$. 
\enen
\end{theorem}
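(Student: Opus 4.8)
The plan is to use the Jacobian formula from the preceding corollary together with Ding \& Zhou's rank-one eigenvalue perturbation lemma (Lemma \ref{lem:ding}). By the Corollary, at a positive equilibrium $z$ the Jacobian is
\[
D_x g|_{x=z} = \diag(z) K^T - z z^T (K + K^T) = M - z z^T K - z z^T K^T,
\]
where $M := \diag(z) K^T$. By Lemma \ref{lem:ev}, $z$ is an eigenvector of $M$ with eigenvalue $\lambda_1 = z^T K z$. So I would write $D_x g(z) = M + z v^T$ for a suitable $v^T \in \R^{1 \times n}$ and apply Lemma \ref{lem:ding} twice, or (cleaner) once with a combined rank-one update, exploiting that both correction terms are of the form $z(\,\cdot\,)^T$.

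First I would note that $z z^T K + z z^T K^T = z\,(z^T K + z^T K^T)$, which is indeed of the form $z\,w^T$ with $w^T = -(z^T K + z^T K^T) = -z^T(K + K^T)$ after moving the sign; that is, $D_x g(z) = M + z w^T$ with $w^T := -z^T(K + K^T)$. Since $z$ is an eigenvector of $M$ with eigenvalue $\lambda_1$, part (i) of Lemma \ref{lem:ding} gives that the eigenvalues of $M + z w^T$ are $(\lambda_1 + w^T z,\ \lambda_2, \ldots, \lambda_n)$, and part (ii) gives that $z$ is an eigenvector of $M + z w^T$ for the eigenvalue $\lambda_1 + w^T z$. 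It remains to compute $w^T z$.

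The key computation is $w^T z = -z^T(K + K^T) z = -(z^T K z + z^T K^T z) = -2 z^T K z = -2\lambda_1$, since $z^T K^T z = (z^T K z)^T = z^T K z$ is a scalar. Hence $\lambda_1 + w^T z = \lambda_1 - 2\lambda_1 = -\lambda_1 = -z^T K z$, which proves both (i) (the spectrum is $(-\lambda_1, \lambda_2, \ldots, \lambda_n)$) and (ii) ($z$ is an eigenvector for $-\lambda_1$). I do not anticipate a real obstacle here: the only thing to be careful about is verifying the hypotheses of Lemma \ref{lem:ding} — namely that the vector being multiplied on the left in the rank-one update is genuinely an eigenvector of $M$, which is exactly the content of Lemma \ref{lem:ev} — and bookkeeping the signs so that the two correction terms $-zz^T K$ and $-zz^T K^T$ are correctly absorbed into a single $z w^T$. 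The symmetry identity $z^T K^T z = z^T K z$ is what makes the factor of $2$ (and hence the clean $-\lambda_1$) appear.
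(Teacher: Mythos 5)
Your proposal is correct and follows essentially the same route as the paper: write the Jacobian at $z$ as the rank-one update $\diag(z)K^T + z w^T$ (the paper uses $u v^T$ with $u=-z$, $v^T = z^T(K+K^T)$, which is the same decomposition up to where the sign is placed), invoke Lemma \ref{lem:ev} to see $z$ is an eigenvector of $\diag(z)K^T$ with eigenvalue $\lambda_1 = z^TKz$, and apply Lemma \ref{lem:ding} together with the computation $w^Tz = -2z^TKz$. No gaps.
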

\begin{proof}
At a positive equilibrium $z$, $D_x g(z) = \diag(z) K^T - z z^T \left(K + K^T\right) = M + u v^T$, where $M = \diag(z) K^T$, $u=-z$, and $v^T = z^T \left(K + K^T\right)$. By Lemma \ref{lem:ev}, $u=-z$ is an eigenvector of $M$ with the associated eigenvalue $\lambda_1$. By Lemma \ref{lem:ding}, the eigenvalues of $D_xg(z)$ are $\left( \lambda_1+ v^Tu, \lambda_2, \ldots, \lambda_n \right)$, and $z$ is an eigenvector of $D_xg(z)$ with the corresponding eigenvalue $\lambda_1+ v^Tu$. 
 It is only required to show that $\lambda_1 + v^T u = -\lambda_1$. Indeed,
\begin{equation*}
\lambda_1 + v^T u = z^T K z - z^T \left(K + K^T\right) z = z^T K z - 2 z^T K z = - z^T K z = -\lambda_1, 
\end{equation*}
which proves the result.
\end{proof}
\begin{remark} \label{rem:pos}
When $z$ is a positive equilibrium, $z^T K z >0$. Therefore, $D_x g(z)$ has at least one negative eigenvalue $- z^T K z$. It is tempting to conclude from this that a positive equilibrium can never be a repeller. However, this is not correct because the eigendirection corresponding to $-z^T K z$ is $\spn \{z\}$, which is not along the simplex $S_{n} = \left \{(x_1, \ldots, x_n) \in \R^n | x_i \ge 0, \sum_{i=1}^n x_i = 1 \right \}$. 
\end{remark}
\begin{corollary} \label{thm:detD}
Let $z$ be a positive equilibrium of $(\HH,K)$. Then 
\[
\det\left(D_x g(z) \right) = - \det(K) \prod_{i=1}^n z_i. 
\]
\end{corollary}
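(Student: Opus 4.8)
The plan is to read off the determinant directly from the eigenvalue list supplied by Theorem \ref{thm:eigD}. First I would recall that for $z$ a positive equilibrium, writing $(\lambda_1 := z^T K z, \lambda_2, \ldots, \lambda_n)$ for the eigenvalues of $M := \diag(z) K^T$, Theorem \ref{thm:eigD}(i) tells us that the eigenvalues of $D_x g(z)$ are exactly $(-\lambda_1, \lambda_2, \ldots, \lambda_n)$. Since the determinant of any square matrix equals the product of its eigenvalues (with multiplicity), this gives $\det(D_x g(z)) = (-\lambda_1)\lambda_2\cdots\lambda_n = -\lambda_1\lambda_2\cdots\lambda_n$.

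Next I would identify $\lambda_1\lambda_2\cdots\lambda_n$ with $\det(M) = \det(\diag(z) K^T)$, again because the product of eigenvalues of $M$ is its determinant. Then a one-line computation using multiplicativity of the determinant finishes it: $\det(\diag(z) K^T) = \det(\diag(z))\,\det(K^T) = \left(\prod_{i=1}^n z_i\right)\det(K)$, where the last equality uses $\det(K^T) = \det(K)$. Combining, $\det(D_x g(z)) = -\det(K)\prod_{i=1}^n z_i$, as claimed.

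There is essentially no obstacle here; the corollary is a direct bookkeeping consequence of Theorem \ref{thm:eigD}. The only things to be careful about are (a) making sure the sign is attached to the correct factor — only $\lambda_1$ flips sign, so exactly one minus sign appears overall — and (b) noting implicitly that these identities are statements over $\C$, which is fine since eigenvalues and determinants behave the same way there. No appeal to positivity of $z$ is needed for the determinant identity itself (positivity was only used upstream to guarantee $z$ is an eigenvector of $M$ in Lemma \ref{lem:ev}, hence to justify the eigenvalue bookkeeping of Theorem \ref{thm:eigD}).
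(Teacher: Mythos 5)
Your proposal is correct and is essentially identical to the paper's own proof: both read the eigenvalues of $D_x g(z)$ off Theorem \ref{thm:eigD}, take their product to get $-\lambda_1\cdots\lambda_n = -\det(\diag(z)K^T)$, and finish by multiplicativity of the determinant. No issues.
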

\begin{proof}
Let the eigenvalues of $\diag(z) K^T$ be $\left( \lambda_1 := z^T K z, \lambda_2, \ldots, \lambda_n \right)$. Then 
$\det\left(D_x g(z) \right) = - \lambda_1 \ldots \lambda_n = - \det\left( \diag(z) K^T \right) = - \det(\diag(z)) \det(K)$. 
\end{proof}

\begin{theorem} \label{thm:main_stab2}
Suppose that $(\HH,K)$ has a positive equilibrium $z$, i.e. $z$ is a positive solution of $K^T z = \left( z^T K^T z\right) \one$ for the $n \times n$ matrix $K$. 
Then $z$ is linearly stable if and only if $K$ is invertible and the matrix 
\begin{equation} \label{eq:stab_matrix}
\diag\left(z \right) K^T 
\end{equation}
has exactly $n-1$ eigenvalues with negative real part. 
\end{theorem}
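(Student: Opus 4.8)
The plan is to reduce linear stability of $z$ within the simplex $S_n$ to a Hurwitz condition on the restriction of the Jacobian $D:=D_xg(z)$ to the tangent space of $S_n$ at $z$, and then to translate that condition into the stated condition on $\diag(z)K^T$ using Theorem \ref{thm:eigD}. First I would record, via the Corollary preceding Lemma \ref{lem:ding}, that at the positive equilibrium $z$ the Jacobian is $D=\diag(z)K^T-zz^T(K+K^T)$, and that by Theorem \ref{thm:eigD} its eigenvalues are $(-\lambda_1,\lambda_2,\ldots,\lambda_n)$, where $(\lambda_1:=z^TKz,\lambda_2,\ldots,\lambda_n)$ are the eigenvalues of $\diag(z)K^T$, with $z$ a right eigenvector of $D$ for $-\lambda_1$; moreover $\lambda_1=z^TKz>0$ by Remark \ref{rem:pos}.

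Next I would isolate the dynamics on the simplex. Since $\sum_i g_i(x)=\ol\rho(x)\bigl(1-\sum_i x_i\bigr)$, the affine hyperplane $\{x:\sum_i x_i=1\}$ containing $S_n$ is flow-invariant, hence its tangent space $T=\{v\in\R^n:\one^Tv=0\}$ is invariant under $D$; because $z>0$ with $\sum_i z_i=1$, the point $z$ is interior to $S_n$, so linear stability of $z$ means precisely that $D|_T$ is Hurwitz (all eigenvalues with negative real part). Since $\one^Tz=1\neq0$ we have $z\notin T$, giving the direct-sum decomposition $\R^n=\spn\{z\}\oplus T$ into $D$-invariant subspaces; therefore the characteristic polynomial of $D$ factors as $(t+\lambda_1)$ times the characteristic polynomial of $D|_T$, and comparing with the factorization $(t+\lambda_1)\prod_{i=2}^n(t-\lambda_i)$ supplied by Theorem \ref{thm:eigD}(i) (cancelling the common factor $t+\lambda_1$ in $\C[t]$), the eigenvalues of $D|_T$ are exactly $\lambda_2,\ldots,\lambda_n$. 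Hence $z$ is linearly stable if and only if $\mathrm{Re}(\lambda_i)<0$ for every $i\in\{2,\ldots,n\}$.

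Finally I would match this with the statement. If $K$ is invertible then $\det(\diag(z)K^T)=\bigl(\prod_i z_i\bigr)\det(K)\neq0$, so $0$ is not among $\lambda_1,\ldots,\lambda_n$; combined with $\lambda_1>0$, the hypothesis that $\diag(z)K^T$ has exactly $n-1$ eigenvalues with negative real part forces the lone exception to be $\lambda_1$, i.e. $\mathrm{Re}(\lambda_i)<0$ for $i\geq2$, so $z$ is linearly stable. Conversely, if $z$ is linearly stable then $\mathrm{Re}(\lambda_i)<0$ for $i\geq2$, so no such $\lambda_i$ vanishes, and since $\lambda_1>0$ none of $\lambda_1,\ldots,\lambda_n$ vanishes; thus $\det(\diag(z)K^T)\neq0$, whence $\det(K)\neq0$ and $K$ is invertible, and the eigenvalues of $\diag(z)K^T$ with negative real part are exactly $\lambda_2,\ldots,\lambda_n$, which number $n-1$.

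I expect the main obstacle to be the middle step: justifying that the spectrum governing stability is that of the restriction $D|_T$ and that this restriction has spectrum exactly $\{\lambda_2,\ldots,\lambda_n\}$. This hinges on the invariance of $T$ under $D$ (from flow-invariance of the affine hyperplane), on $z\notin T$ so that the eigenvalue $-\lambda_1$ is accounted for by the complementary line $\spn\{z\}$ rather than by a tangent direction, and on a clean characteristic-polynomial cancellation that handles possible coincidences between $-\lambda_1$ and the $\lambda_i$; once this is in place the remainder is bookkeeping built on Theorem \ref{thm:eigD}.
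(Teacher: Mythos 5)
Your proof is correct and rests on the same pivot as the paper's: Theorem \ref{thm:eigD} together with Remark \ref{rem:pos}, which guarantee that the eigenvalue $-\lambda_1=-z^TKz$ of $D_xg(z)$ is automatically negative, so the stated condition on $\diag(z)K^T$ amounts to all remaining eigenvalues $\lambda_2,\dots,\lambda_n$ having negative real part. You differ from the paper in two respects, both of which add rather than lose rigor. First, the paper works with the full Jacobian on $\R^n$ and simply equates ``$D_xg(z)$ has $n$ eigenvalues with negative real part'' with linear stability; you instead justify that stability on $S_n$ is governed by the restriction $D|_T$ to the tangent space $T=\{v:\one^Tv=0\}$, proving $D$-invariance of $T$ from flow-invariance of the hyperplane $\{\one^Tx=1\}$ (equivalently, $\one^TD=-\lambda_1\one^T$ at the equilibrium) and identifying $\sigma(D|_T)=\{\lambda_2,\dots,\lambda_n\}$ via the splitting $\R^n=\spn\{z\}\oplus T$ and cancellation of the common factor $t+\lambda_1$. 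Since $-\lambda_1<0$ always, the two readings of ``linearly stable'' coincide, so this is a sharpening consistent with Remark \ref{rem:pos}, not a divergence. Second, in the converse the paper obtains invertibility of $K$ by routing through isolatedness and Theorem \ref{thm:uniqueposeq}, whereas you read it off directly from $\det\bigl(\diag(z)K^T\bigr)=\prod_{i}\lambda_i\neq0$ (no eigenvalue vanishes once $\lambda_1>0$ and $\mathrm{Re}\,\lambda_i<0$ for $i\geq2$), which is shorter and self-contained. No gaps.
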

\begin{proof}
Let $z$ be a positive solution of $K^T z = \left(z^T K z\right) \one$ for the $n \times n$ matrix $K$. If $K$ is invertible, then $z$ is the unique equilibrium of $(\HH,K)$. If $\diag(z)K^T $ has exactly $n-1$ eigenvalues with negative real part, then by Theorem \ref{thm:eigD} and Remark \ref{rem:pos}, the $n \times n$ matrix $D_x g(z)$ has $n$ eigenvalues with negative real part. It follows that $z$ is linearly stable. 
Conversely, if the positive equilibrium $z$ is linearly stable, then $z$ is isolated and therefore unique by Theorem \ref{thm:uniqueposeq}. So $K$ must be invertible, $D_x g(z)$ has $n$ eigenvalues with negative real part and $ \diag(z) K^T$ has exactly $n-1$ eigenvalues with negative real part. 
\end{proof}

\begin{corollary}
Suppose that $(\HH,K)$ has a unique positive equilibrium $z = (1/n) \one$. 
Then $z$ is linearly stable if and only if $K^T$ has exactly $n-1$ eigenvalues with negative real part. 
\end{corollary}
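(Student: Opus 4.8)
The plan is to deduce the corollary directly from Theorem \ref{thm:main_stab2}, exploiting the very special form of the equilibrium $z = (1/n)\one$.

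First I would record that the invertibility hypothesis of Theorem \ref{thm:main_stab2} is automatic here. Since $(\HH,K)$ is assumed to have a \emph{unique} positive equilibrium, $K$ must be invertible: as noted at the start of Section \ref{sec:poseq} (and used in the proof of Theorem \ref{thm:main_stab2}), the positive equilibria are exactly the positive scalar multiples of positive solutions of $K^T z = \one$, so if $K$ were singular this solution set would be either empty or a positive‑dimensional affine subspace, forcing $0$ or infinitely many positive equilibria rather than exactly one. Hence Theorem \ref{thm:main_stab2} applies and tells us that $z$ is linearly stable if and only if $\diag(z)\,K^T$ has exactly $n-1$ eigenvalues with negative real part.

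Next I would substitute $z = (1/n)\one$, so that $\diag(z) = (1/n)\,I$ and therefore $\diag(z)\,K^T = (1/n)\,K^T$. Multiplying a matrix by the positive scalar $1/n$ multiplies every eigenvalue by $1/n$, hence preserves the sign of the real part of each eigenvalue and leaves unchanged the count (with multiplicity) of eigenvalues having negative real part. Therefore $\diag(z)\,K^T$ has exactly $n-1$ eigenvalues with negative real part if and only if $K^T$ does, and chaining this equivalence with the one supplied by Theorem \ref{thm:main_stab2} finishes the proof.

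I do not anticipate any genuine obstacle: the argument is essentially a one‑line specialization of Theorem \ref{thm:main_stab2}. The only points deserving a moment's care are the observation that a unique positive equilibrium already forces $K$ to be invertible (so no extra hypothesis is needed) and the remark that rescaling by the positive constant $1/n$ really does preserve the ``$n-1$'' count — both being immediate from facts already established in the excerpt.
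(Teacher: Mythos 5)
Your proposal is correct and matches the paper's own argument: both reduce to Theorem \ref{thm:main_stab2} and observe that $\diag((1/n)\one)K^T=(1/n)K^T$ has the same eigenvalue sign pattern as $K^T$, since scaling by the positive constant $1/n$ preserves the signs of the real parts. Your extra remark that uniqueness of the positive equilibrium forces $K$ to be invertible is a harmless elaboration of what the paper leaves implicit.
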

\begin{proof}
The eigenvalues of $K^T$ are the same as the eigenvalues of $ \diag\left( \one/n \right) K^T $ up to a multiple of the positive number $1/n$, in particular the real parts of the eigenvalues in the two cases have the same signs. 
\end{proof}

\begin{proof}[Proof of Theorem \ref{thm:main_stab}]
Suppose that $(\HH,K)$ has a unique, positive linearly stable equilibrium $z$. By Theorem \ref{thm:main_stab2}, $K$ is invertible, $z$ is a scalar multiple of $\left(K^T\right)^{-1} \one$, i.e.  $\left(K^T\right)^{-1} \one \in \R^n_{> 0}$, and $\diag\left(z \right) K^T$
has exactly $n-1$ eigenvalues with negative real part. This implies that $\diag((K^T)^{-1} \one) K^T$
has exactly $n-1$ eigenvalues with negative real part. Conversely, suppose that $K$ is invertible, $(K^T)^{-1} \one \in \R^n_{> 0}$, and $\diag\left((K^T)^{-1} \one \right) K^T$ has exactly $n-1$ eigenvalues with negative real part. By the first two conditions, $(\HH,K)$ has a unique, positive equilibrium $z$ which is a scalar multiple of $\left(K^T\right)^{-1} \one$. This implies that $\diag(z) K^T$ has exactly $n-1$ eigenvalues with negative real part. By Theorem \ref{thm:main_stab2}, $z$ is linearly stable. 
\end{proof}

\subsection{Stability of Boundary Equilibria}

\begin{lemma} \label{lem:boundary_eig}
If $z$ is an equilibrium of $(\HH,K)$ such that $z_i =0$, then $f_i(z) - \bar \rho(z)$ is an eigenvalue of $D_x g(z)$. 
\end{lemma}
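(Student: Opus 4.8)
The plan is to read the $i$-th row of the Jacobian directly off the componentwise formula \eqref{eq:jac_comp} and observe that the hypothesis $z_i = 0$ annihilates every off-diagonal entry in that row. Concretely, \eqref{eq:jac_comp} evaluated at $x = z$ reads
\[
\pdv{g_i(x)}{x_j}\Big|_{x=z} = \delta_{ij}\bigl(f_i(z) - \bar \rho(z)\bigr) + k_{ji}\, z_i - z_i \sum_{m=1}^n (k_{mj} + k_{jm}) z_m,
\]
and setting $z_i = 0$ leaves only the term $\delta_{ij}\bigl(f_i(z) - \bar \rho(z)\bigr)$. Hence the $i$-th row of $D_x g(z)$ equals $\bigl(f_i(z) - \bar \rho(z)\bigr)\, e_i^T$, where $e_i$ is the $i$-th standard basis vector. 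This degeneracy is exactly the linearized manifestation of the invariance of the face $\{x_i = 0\}$ established in Lemma \ref{lem:bdry}.

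From here I would conclude that $f_i(z) - \bar \rho(z)$ is an eigenvalue. The shortest route: since row $i$ of $D_x g(z)$ is a scalar multiple of $e_i^T$, column $i$ of the transpose $\left(D_x g(z)\right)^T$ is that same scalar multiple of $e_i$, so $e_i$ is an eigenvector of $\left(D_x g(z)\right)^T$ with eigenvalue $f_i(z) - \bar \rho(z)$; as a matrix and its transpose share the same characteristic polynomial, this value is an eigenvalue of $D_x g(z)$ itself. Equivalently, one may expand $\det\bigl(D_x g(z) - \lambda I\bigr)$ along the $i$-th row to see that the characteristic polynomial factors as $\bigl(f_i(z) - \bar \rho(z) - \lambda\bigr)$ times the determinant of the complementary minor, so $\lambda = f_i(z) - \bar \rho(z)$ is a root.

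There is essentially no obstacle here: the statement is an immediate corollary of the Jacobian computation already performed. The only points to be careful about are citing \eqref{eq:jac_comp} for the entries of $D_x g$ and noting that the hypothesis ``$z$ is an equilibrium'' is used only to make $D_x g(z)$ the relevant linearization, while the crucial row structure follows purely from $z_i = 0$.
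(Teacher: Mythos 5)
Your proof is correct and follows essentially the same route as the paper: both read off from \eqref{eq:jac_comp} that $z_i=0$ kills all but the diagonal term in row $i$, so that row becomes $\bigl(f_i(z)-\bar\rho(z)\bigr)e_i^T$, and then extract the eigenvalue from the resulting factorization of the characteristic polynomial. Your additional remarks (the transpose-eigenvector phrasing and the observation that the equilibrium hypothesis is not actually needed for the row structure) are accurate but do not change the argument.
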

\begin{proof}
From \eqref{eq:jac_comp}, if $z_i=0$, then $\pdv{g_i}{x_j} |_{x=z} = \delta_{ij} (f_i(z) - \bar \rho (z))$, i.e. the $i$th row of $D_x g(z)$ has zeros everywhere except in the $i$th entry which is $f_i(z) - \bar \rho(z)$. This means the characteristic polynomial $\det(D_x g(z) - \lambda I)$ has the factor $\lambda - (f_i(z) - \bar \rho(z))$, from which the result follows. 
\end{proof}

\begin{theorem} \label{thm:eigsofjac}
Consider a hyperchain system $(\HH,K)$ with the associated dynamical system $\dot x = g(x) = x \ast \left(f(x) - (x \cdot f) \one \right)$. 
Let $\II$ be a nonempty subset of $\{1, \ldots, n\}$ and $\II^c$ its complement. Consider a boundary of $S_n$ defined by $\BB_\II := \{x ~|~ x_i = 0  \iff i \in \II\}$. Let $z \in \BB_\II$ be an equilibrium of $(\HH,K)$ and define $\hat z_i = z_i$ for $i \in \II^c$ and let $\hat z = (z_i : i \in \II^c)$. So the dynamical system associated with $(\HH,K)[X_{\II^c}]$ by $\dot {\hat x} = \hat g(\hat x)$. 
Then the eigenvalues of the Jacobian $D_xg |_{x=z}$ are 
\[
\bigcup_{i \in \II} \left( f_i(z) - (z \cdot f(z)\one)\right) \bigcup \sigma_{\left( (\HH,K)[\II^c] \right)} (\hat z) 
\] 
where $\sigma_{\left( (\HH,K)[\II^c] \right)}(\hat z)$ denotes the set of eigenvalues of the Jacobian matrix of $(\HH,K)[\II^c]$ at its positive equilibrium $\hat z$. 
\end{theorem}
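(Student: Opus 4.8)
The plan is to pick an ordering of the coordinates that lists the indices of $\II^c$ first and those of $\II$ last, argue that in this ordering $D_x g|_{x=z}$ is block upper-triangular, and then identify its two diagonal blocks. First I would write out the Jacobian entries from \eqref{eq:jac_comp}: $\partial g_i/\partial x_j|_{x=z} = \delta_{ij}(f_i(z)-\bar\rho(z)) + k_{ji}z_i - z_i\sum_{m=1}^n(k_{mj}+k_{jm})z_m$. Since $z\in\BB_\II$, the coordinate $z_i$ vanishes precisely when $i\in\II$, so for each such $i$ the $i$-th row collapses to its diagonal entry $\delta_{ij}(f_i(z)-\bar\rho(z))$ — this is exactly the computation behind Lemma \ref{lem:boundary_eig}. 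Hence $D_xg(z)$ has an $\II^c\times\II^c$ block $A$, an arbitrary upper-right block, a zero lower-left block, and a diagonal lower-right block $\diag(f_i(z)-\bar\rho(z):i\in\II)$, so its characteristic polynomial factors as $\det(\lambda I-A)$ times $\prod_{i\in\II}(\lambda-(f_i(z)-\bar\rho(z)))$. This already accounts for the eigenvalues $\bigcup_{i\in\II}(f_i(z)-\bar\rho(z))$ listed in the statement.

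The second step is to show that $A$ is literally the Jacobian $D_{\hat x}\hat g(\hat z)$ of the restricted system $(\HH,K)[X_{\II^c}]$. Evaluating at $z$, every sum defining $f_i(z)$ or $\bar\rho(z)$ keeps only the terms indexed inside $\II^c$ (because $z_m=0$ for $m\in\II$), so for $i,j\in\II^c$ we get $f_i(z)=\hat f_i(\hat z)$, $\bar\rho(z)=\hat{\bar\rho}(\hat z)$, and $\sum_{m=1}^n(k_{mj}+k_{jm})z_m=\sum_{m\in\II^c}(k_{mj}+k_{jm})\hat z_m$. Since the rate constants of $(\HH,K)[X_{\II^c}]$ are just the $k_{ij}$ with $i,j\in\II^c$, applying \eqref{eq:jac_comp} to the smaller system at $\hat z$ reproduces the entries of $A$ verbatim. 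I would then invoke the earlier theorem relating boundary equilibria of $(\HH,K)$ to positive equilibria of induced sub-hyperchains to note that $\hat z$ is genuinely a positive equilibrium of $(\HH,K)[X_{\II^c}]$, so $\sigma_{((\HH,K)[\II^c])}(\hat z)$ is well-defined and is exactly the multiset of eigenvalues of $A$. Combining the two blocks gives the claimed description of the spectrum of $D_xg(z)$, counted with multiplicity.

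The only delicate point is the identification $A=D_{\hat x}\hat g(\hat z)$: one has to check that every term in \eqref{eq:jac_comp} that references a species outside $\II^c$ truly vanishes on $\BB_\II$, so that the $\II^c$-block of the full Jacobian \emph{is} the Jacobian of the smaller hyperchain system rather than merely similar to it. Everything else — the block-triangular structure and reading off the diagonal entries as eigenvalues — is immediate from Lemma \ref{lem:boundary_eig} and the explicit Jacobian formula, so I expect no further obstacle.
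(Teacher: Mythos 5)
Your proposal is correct and follows essentially the same route as the paper: the paper's proof also uses Lemma \ref{lem:boundary_eig} to collapse the rows indexed by $\II$ and then factors the characteristic polynomial as $\prod_{i \in \II}\left(\lambda - (f_i(z) - \bar\rho(z))\right)\det\left(\lambda I - D_{\hat x}\hat g(\hat z)\right)$. The only difference is that you spell out the block-triangular reordering and verify explicitly that the $\II^c$-block coincides with the Jacobian of the induced subsystem, details the paper leaves implicit.
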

\begin{proof}
Let $z \in \BB_\II$ be an equilibrium of $(\HH,K)$. By Lemma \ref{lem:boundary_eig}, $f_i(z) - \bar \rho(z)$ is an eigenvalue of $D_xg(z)$, and 
\[
\det(\lambda I - D_xg(z)) = \prod_{i \in \II} \left(\lambda - (f_i(z) - \bar \rho(z))\right) \det(\lambda I - D_{\hat x}\hat g(\hat z))
\]
The result follows from this. 
\end{proof}

\begin{corollary}
Consider a hyperchain system $(\HH,K)$ and let $\II$ be a nonempty subset of $\{1, \ldots, n\}$. 
In the notation of Lemma \ref{lem:notational}, suppose that $\hat z$ is an exponentially stable equilibrium of $(\HH,K)[\II^c]$. Then $z$ is an exponentially stable equilibrium of $(\HH,K)$ in $\BB_\II$ if and only if 
\[
\bar \rho (z) > \max_{i \in \II}  f_i(z) 
\]
\end{corollary}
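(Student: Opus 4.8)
The statement relates exponential stability of the boundary equilibrium $z\in\BB_\II$ of the $S_n$-system $(\HH,K)$ to the reduced system $(\HH,K)[X_{\II^c}]$, for which $\hat z$ is assumed exponentially stable; recall that $z$ (obtained from $\hat z$ by padding with zeros) is automatically an equilibrium of $(\HH,K)$ lying in $\BB_\II$. My plan is to read off the spectrum of the Jacobian $D_x g(z)$ from Theorem \ref{thm:eigsofjac}, to isolate the eigenvalue governing the direction $\spn\{z\}$ transverse to $S_n$ (which will be negative automatically), and to observe that the remaining conditions for stability of $z$ split into ``$\hat z$ stable'' (given) plus ``$f_i(z)-\bar\rho(z)<0$ for all $i\in\II$'', the latter being exactly $\bar\rho(z)>\max_{i\in\II}f_i(z)$.

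First I would pin down the radial eigenvalue. Using the Jacobian formula \eqref{eq:jac1}, the identities $z_i=0$ for $i\in\II$ and $f_i(z)=\bar\rho(z)$ for $i\in\II^c$ (the latter since $g_i(z)=z_i(f_i(z)-\bar\rho(z))=0$ with $z_i>0$ there), and $z^T(K+K^T)z=2z^TKz=2\bar\rho(z)$, a direct calculation gives $D_x g(z)\,z=-\bar\rho(z)\,z$; moreover $\bar\rho(z)=z^TKz>0$ because the sub-hyperchain $(\HH,K)[X_{\II^c}]$ has at least one edge and $z$ is positive on $\II^c$. Thus $\spn\{z\}$ is $D_x g(z)$-invariant with negative eigenvalue $-\bar\rho(z)$. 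The tangent space $T_zS_n=\{v:\one^Tv=0\}$ is also $D_x g(z)$-invariant, since the affine hyperplane $\{\one^Tx=1\}$ is flow-invariant ($\tfrac{d}{dt}(\one^Tx)=\bar\rho(x)(1-\one^Tx)$ vanishes on it, from \eqref{eq:comp}). As $z\notin T_zS_n$, we obtain the invariant splitting $\R^n=\spn\{z\}\oplus T_zS_n$, whence $\mathrm{spec}\big(D_x g(z)\big)=\{-\bar\rho(z)\}\cup\mathrm{spec}\big(D_x g(z)|_{T_zS_n}\big)$. Since $z$ is exponentially stable, as an equilibrium of the $S_n$-system, exactly when $D_x g(z)|_{T_zS_n}$ is Hurwitz, and $-\bar\rho(z)<0$, this is equivalent to $D_x g(z)$ itself being Hurwitz, i.e. to every eigenvalue of $D_x g(z)$ having negative real part.

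Applying the identical argument to the reduced system shows that the hypothesis ``$\hat z$ is an exponentially stable equilibrium of $(\HH,K)[X_{\II^c}]$'' is equivalent to ``every element of $\sigma_{((\HH,K)[\II^c])}(\hat z)$ has negative real part''. Now invoke Theorem \ref{thm:eigsofjac}, which identifies the eigenvalues of $D_x g(z)$ as the union of the real numbers $\{f_i(z)-\bar\rho(z):i\in\II\}$ with the set $\sigma_{((\HH,K)[\II^c])}(\hat z)$. Under the hypothesis the second part already lies in the open left half-plane, so $D_x g(z)$ is Hurwitz iff $f_i(z)-\bar\rho(z)<0$ for every $i\in\II$, equivalently $\bar\rho(z)>\max_{i\in\II}f_i(z)$ (these are genuinely real eigenvalues, so ``negative real part'' is just ``negative''). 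Chaining this with the equivalence of the previous paragraph yields: $z$ is an exponentially stable equilibrium of $(\HH,K)$ if and only if $\bar\rho(z)>\max_{i\in\II}f_i(z)$, which is the claim.

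The main obstacle is the first step: verifying $D_x g(z)\,z=-\bar\rho(z)\,z$ for a \emph{boundary} equilibrium (the positive-equilibrium analogue is Theorem \ref{thm:eigD}, but here the vanishing components must be handled with care), together with being precise that ``$z$ exponentially stable for the $S_n$-system'' is equivalent to the restricted linearization $D_x g(z)|_{T_zS_n}$ being Hurwitz. Once the radial eigenvalue is identified as $-\bar\rho(z)<0$, the rest is a direct reading-off of the spectrum from Theorem \ref{thm:eigsofjac}.
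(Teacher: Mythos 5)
Your proof is correct and takes essentially the same route as the paper's: both arguments read the spectrum of $D_x g(z)$ off Theorem \ref{thm:eigsofjac} and observe that the eigenvalues $f_i(z)-\bar\rho(z)$ for $i\in\II$ are the only ones not already controlled by the assumed stability of $\hat z$, so they must all be negative, which is exactly $\bar\rho(z)>\max_{i\in\II}f_i(z)$. The paper's version is a two-line argument that implicitly identifies ``exponentially stable on the simplex'' with ``full Jacobian Hurwitz''; your verification that $D_x g(z)\,z=-\bar\rho(z)\,z$ at the boundary equilibrium and the resulting invariant splitting $\R^n=\spn\{z\}\oplus T_zS_n$ merely make that identification explicit.
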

\begin{proof}
Since $\hat z$ is an exponentially stable equilibrium of $(\HH,K)[\II^c]$, all eigenvalues of $D_{\hat x}\hat g(\hat z)$ have negative real part. By Theorem \ref{thm:eigsofjac}, all the remaining eigenvalues have negative real part if and only if the condition $\bar \rho (z) > \max_{i \in \II}  f_i(z)$ is satisfied. 
\end{proof}

\section{Permanence of a hyperchain system} \label{sec:permanence}

A permanent dynamical system must have an interior equilibrium, see \cite{srzednicki1985rest}. We state the result here in the context of a hyperchain system. 

\begin{lemma}
Let $(\HH,K)$ be a hyperchain system. If $(\HH,K)$ is permanent, then $(\HH,K)$ has a positive equilibrium. 
\end{lemma}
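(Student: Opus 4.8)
The cited theorem of Srzednicki already yields this, so one option is simply to quote \cite{srzednicki1985rest}; but there is a short self-contained argument that exploits the hyperchain structure directly. The idea is to manufacture a probability measure $\mu$ concentrated on a compact subset of the interior of $S_n$ along which the logarithmic growth rates $f_i(x)-\bar\rho(x)$ average to zero, and then to show that the \emph{barycenter} of $\mu$ is a positive equilibrium. The point is that, because $f$ is linear and $\bar\rho(x)=x\cdot f(x)$, a point with vanishing average drift in every coordinate is forced to be an honest equilibrium.

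First I would fix any point $x^0$ in the (nonempty) interior of $S_n$. By \eqref{eq:comp} the interior of $S_n$ is forward invariant, and since $S_n$ is compact and invariant the flow $\phi_t$ is defined for all $t\ge 0$. Permanence supplies a $\delta>0$ with $\liminf_{t\to\infty}\min_i x_i(t)\ge\delta$; combining this with the positivity and continuity of $x_i(\cdot)$ on the initial finite time interval gives a single $\delta'>0$ with $x_i(t)\ge\delta'$ for all $t\ge 0$ and all $i$. Hence the forward orbit of $x^0$ stays inside the compact set $C:=\{x\in S_n : x_i\ge\delta' \text{ for all } i\}$, which lies in the interior of $S_n$.

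Next, from \eqref{eq:comp} one has $\tfrac{d}{dt}\ln x_i = f_i(x)-\bar\rho(x)$, so integrating yields $\bigl|\int_0^T (f_i-\bar\rho)(\phi_t(x^0))\,dt\bigr| = |\ln x_i(T)-\ln x_i(0)|\le 2|\ln\delta'|$ uniformly in $T$; therefore the time averages $\tfrac1T\int_0^T (f_i-\bar\rho)(\phi_t(x^0))\,dt$ tend to $0$ as $T\to\infty$, for every $i$. Let $\mu$ be a weak-$*$ limit, along some $T_k\to\infty$, of the occupation measures $\mu_{T_k}=\tfrac1{T_k}\int_0^{T_k}\delta_{\phi_t(x^0)}\,dt$. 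These are probability measures supported in the fixed compact set $C$, so a convergent subsequence exists and $\mu$ is a probability measure on $C$; by continuity of $f_i-\bar\rho$ we get $\int_C (f_i-\bar\rho)\,d\mu = 0$ for every $i$.

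Finally, set $z^*:=\int_C x\,d\mu(x)$. Since $\mu$ is a probability measure on the convex compact set $C$, we have $z^*\in C$, hence $z^*\in S_n$ with $z^*_i\ge\delta'>0$. Because each $f_i$ is linear in $x$, $f_i(z^*)=\int_C f_i\,d\mu = \int_C \bar\rho\,d\mu =: c$, the \emph{same} constant $c$ for all $i$, so $f(z^*)=c\one$. Using the identity $\bar\rho(x)=x\cdot f(x)$ and $z^*\cdot\one=1$ gives $\bar\rho(z^*)=z^*\cdot f(z^*)=c\,(z^*\cdot\one)=c$. Thus $f_1(z^*)=\cdots=f_n(z^*)=\bar\rho(z^*)$, which by \eqref{eq:poseq} means exactly that $z^*$ is a positive equilibrium of $(\HH,K)$. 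The only slightly delicate points are (a) passing from the $\liminf$ form of permanence to a compact interior set that contains the \emph{whole} forward orbit, and (b) the soft fact that the occupation measures have a weak-$*$ convergent subsequence with a genuine probability measure as limit; both are routine, and neither is the conceptual heart, which is the linearity-of-$f$ argument in this last paragraph.
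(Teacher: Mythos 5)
Your argument is correct, but it is genuinely different from what the paper does: the paper offers no proof at all for this lemma and simply cites Srzednicki's general topological result that a permanent (dissipative, boundary-invariant) flow must have an interior rest point, a fact that holds with no algebraic structure on the vector field. You instead give a self-contained proof that exploits the replicator form of \eqref{eq:comp}: writing $\tfrac{d}{dt}\ln x_i=f_i(x)-\bar\rho(x)$, noting that permanence traps one forward orbit in a compact convex set $C$ in the interior so the time averages of $f_i-\bar\rho$ vanish, and then using linearity of $f$ to identify the limit $z^*$ of (a subsequence of) the Ces\`aro averages $\tfrac1T\int_0^T x(t)\,dt$ as a point with $f(z^*)=c\one$; the identity $\bar\rho(z^*)=z^*\cdot f(z^*)=c$ then closes the loop without ever needing to average the quadratic $\bar\rho$ at $z^*$. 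This is essentially the classical Hofbauer--Sigmund ``time averages'' argument for replicator/Lotka--Volterra dynamics. What your route buys is elementarity and a slightly weaker hypothesis (you only use that a single interior orbit stays uniformly away from the boundary, not full permanence); what the citation buys is generality and brevity. The occupation-measure language is a convenience rather than a necessity --- since $f_i$ is linear you could work directly with a convergent subsequence of the vector-valued averages in the compact set $C$ --- but as written the weak-$*$ compactness step is sound because $C$ is compact, so no mass escapes. No gaps.
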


It is shown in \cite{hofbauer1988theory} that the $n$-hypercycle is permanent and has a unique positive equilibrium for all rate constants for any $n \ge 2$. For $n=2,3,4$, the unique positive equilibrium is globally stable, while for $n \ge 5$, the positive equilibrium is unstable. 

In this section, we prove partial converses to two known results relating a hyperchain and its permanence properties. We start with the following result from Section 20.3 of \cite{hofbauer1988theory}. 

\begin{theorem}[Hofbauer, Sigmund] \label{thm:perm_irr}
If a hyperchain system $(\HH,K)$ is permanent, then $\HH$ is  strongly connected. Furthermore, if $n \le 5$, then $\HH$ is Hamiltonian. 
\end{theorem}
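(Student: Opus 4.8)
\emph{Setup.} This is a known result (Hofbauer and Sigmund); to reconstruct it rather than merely cite it, I would prove both halves by contraposition, using the structural constraints that permanence of the relative‑concentration flow on $S_n$ imposes.

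\emph{Strong connectivity.} Suppose $(\HH,K)$ is permanent. If $\HH$ were rooted, say with initial node $X_j$, then $f_j\equiv 0$ and $\dot x_j=-x_j\bar\rho(x)$; since a permanent system eventually traps every interior orbit in a compact $\Omega\subset\mathrm{int}(S_n)$ on which $\bar\rho\ge\beta>0$, this forces $x_j(t)\to 0$, a contradiction. So $\HH$ is unrooted. Now suppose $\HH$ is nonetheless not strongly connected. Its condensation is a DAG with at least two nodes; let $I\subsetneq V$ be the vertex set of a source component. No edge enters $I$ from outside, so for $i\in I$ one has $f_i(x)=(A_{II}x_I)_i$, depending only on $x_I=(x_j)_{j\in I}$, where $A_{II}=(k_{ji})_{i,j\in I}$ is nonnegative, irreducible, and has no zero row (here unrootedness enters; a singleton $I$ must carry a self‑loop). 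The decisive computation is that the within‑$I$ relative concentrations $y_i=x_i/\sigma$, $\sigma=\sum_{i\in I}x_i$, decouple: after the time change $d\tau=\sigma\,dt$ they obey $dy_i/d\tau=y_i\big((A_{II}y)_i-y^{T}A_{II}y\big)$, a replicator equation for $A_{II}$ on the face $S_I$, while $\dot\sigma=(1-\sigma)\,x_I^{T}A_{II}x_I-\sigma\sum_{j\notin I}x_jf_j(x)$. Permanence confines $\sigma$ eventually to a compact subinterval of $(0,1)$ and keeps $\bar\rho$ bounded below, so every $\dot x_i/x_i$ has vanishing time average; passing to a (subsequential) Ces\`aro limit $v=\langle x_I\rangle>0$ and using linearity of the $f_i$ on $I$ gives $A_{II}v=m\mathbf 1_I$ with $m=\langle\bar\rho\rangle>0$, while nonnegativity of the $x_jf_j$ and positivity of $f_j$ for $j\notin I$ give the strict inequality $m>\langle x_I^{T}A_{II}x_I\rangle$. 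The contradiction is then drawn from these two facts together with the Perron--Frobenius structure of $A_{II}$ --- equivalently, one produces an ergodic invariant measure on $\partial S_n$, supported on the invariant face $\{x_j=0:j\notin I\}$ and coming from the $A_{II}$‑replicator there, all of whose transverse Lyapunov exponents are nonpositive, which a permanent system cannot have.

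\emph{Hamiltonicity for $n\le 5$.} By the previous part $\HH$ is strongly connected, so it remains to eliminate the strongly connected but non‑Hamiltonian digraphs on at most five vertices, of which there are only finitely many up to isomorphism. For most rate choices $K$ these already lack a positive equilibrium --- e.g. if two vertices have the same unique in‑neighbour, equality of their fitnesses at a positive equilibrium would force that neighbour's coordinate to vanish --- and are therefore not permanent by the lemma recalled just above (a permanent system has a positive equilibrium); the finitely many remaining non‑generic rate choices for each graph are then ruled out one at a time, typically via a conserved coordinate ratio, an explicit Lyapunov function, or a proper face subsystem $\HH[X_{I^c}]$ that attracts nearby interior orbits.

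\emph{Where the difficulty lies.} The structural reduction for the first part --- source component, decoupling of the within‑$I$ simplex --- is routine bookkeeping; the crux is converting ``the source set $I$ receives no external catalytic help'' into an honest orbit that approaches $\partial S_n$, which is exactly the step that needs the permanence machinery (average Lyapunov functions in the sense of Hofbauer, or the transverse‑exponent criterion). For the second part the crux is that it is an irreducible finite case analysis: there is no single uniform argument, and indeed the hypothesis $n\le 5$ is sharp, since for $n\ge 6$ there exist strongly connected, non‑Hamiltonian hyperchains that are permanent for suitable $K$.
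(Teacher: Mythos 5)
First, a point of orientation: the paper does not prove this statement at all. It is imported as a known result of Hofbauer and Sigmund, cited to Section 20.3 of \cite{hofbauer1988theory}, and the authors explicitly decline to reproduce the argument (remarking only that the same proof, essentially verbatim, yields the strengthened Theorem \ref{thm:pers_irr}). So there is no in-paper proof to compare against; the question is whether your reconstruction stands on its own, and it does not. Your opening step is correct and complete: an initial node $X_j$ has $f_j\equiv 0$, so $\dot x_j=-x_j\bar\rho(x)$, and permanence confines the orbit to a compact subset of the interior on which $\bar\rho$ is bounded below, forcing $x_j\to 0$. The reduction to a source component $I$, the decoupling of the within-$I$ replicator dynamics, and the two time-average identities $A_{II}v=m\one$ and $m>\langle x_I^{T}A_{II}x_I\rangle$ are also sound. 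The gap is that these two facts do not contradict anything: $v^{T}A_{II}v=m\sum_{i\in I}v_i<m$ points in the \emph{same} direction as $m>\langle x_I^{T}A_{II}x_I\rangle$, and since $x\mapsto x^{T}A_{II}x$ is an indefinite quadratic form you cannot interchange it with the Ces\`aro average to compare $\langle x_I^{T}A_{II}x_I\rangle$ with $v^{T}A_{II}v$. The appeal to ``Perron--Frobenius structure'' and to ``an ergodic invariant measure on the boundary with nonpositive transverse exponents'' is a gesture at the missing mechanism, not a derivation: you neither construct such a measure from the source-component subsystem nor state and justify the necessary condition for permanence that would make its existence contradictory. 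You flag this yourself as ``exactly the step that needs the permanence machinery,'' which is an accurate self-assessment and means the irreducibility half is not actually proved.

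The Hamiltonicity half for $n\le 5$ is likewise unproved: no strongly connected non-Hamiltonian digraph on at most five vertices is actually examined, and the claim that for each such graph only ``finitely many non-generic rate choices'' remain is false as stated --- the exceptional rate matrices form positive-codimension subvarieties of $\R^{n\times n}_{>0}$, not finite sets, and some of the five-species cases admit positive equilibria for open sets of $K$ and require genuine dynamical arguments rather than the nonexistence-of-equilibrium shortcut. That case analysis is the substance of the Hofbauer--Sigmund result and cannot be waved through. (Your closing remark that the bound $n\le 5$ is sharp is correct and matches the paper's Example \ref{ex:six}.) In short: the skeleton of your argument is the right one and several individual computations check out, but both halves stop precisely where the real work begins.
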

\begin{example} \label{ex:six}
It is shown in \cite{hofbauer1988theory} that the above statement is not true if the assumption of $n \le 5$ is dropped. We reproduce the example here for convenience. 

\begin{equation}
\begin{tikzpicture}[baseline={(current bounding box.center)}, scale=2.5]
   \node[state] (x5)  at (1,0)  {$X_5$};
   \node[state] (x4)  at (cos{60},sin{60})  {$X_4$};
   \node[state] (x3)  at (cos{120},sin{120})  {$X_3$};
      \node[state] (x2)  at (-1,0)  {$X_2$};
         \node[state] (x1)  at (cos{240},sin{240})  {$X_1$};
      \node[state] (x6)  at (cos{300},sin{300})  {$X_6$};

   \path[->,dashed]
    (x1) edge[bend left] node {$1$} (x2)
    (x2) edge[bend left] node {$2$} (x1)
    (x2) edge[] node {$3$} (x3)
    (x3) edge[bend left] node {$1$} (x4)
    (x4) edge[bend left] node {$1$} (x3)
    (x4) edge[] node {$3$} (x5)
    (x5) edge[bend left] node {$2$} (x6)
    (x6) edge[bend left] node {$1$} (x5)
    (x2) edge[<-] node {$3$} (x6)
    (x1) edge[<-] node {$1$} (x5)
;
  \end{tikzpicture}
\end{equation}
\end{example}
In fact, the statement in Theorem \ref{thm:perm_irr} can be strengthened by assuming weaker hypotheses. 
\begin{theorem} \label{thm:pers_irr}
Suppose that $(\HH,K)$ is persistent and has a unique positive equilibrium. Then $\HH$ is  strongly connected. 
\end{theorem}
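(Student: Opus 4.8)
The plan is to show the contrapositive: if $\HH$ is not strongly connected, then either $(\HH,K)$ has no positive equilibrium or it is not persistent. First I would recall from the earlier theorem on positive equilibria that if $\HH$ is rooted then $(\HH,K)$ has no positive equilibrium, so we may assume $\HH$ is unrooted. If, in addition, $\HH$ is not strongly connected, then the condensation of $\HH$ (the DAG of strongly connected components) has at least two components, and since $\HH$ is unrooted every terminal strongly connected component $\CC$ of the condensation must be a nontrivial (hence cyclic) closed communicating class: it has no outgoing edges in the condensation, and every vertex has indegree $\ge 1$, so all incoming edges to vertices of $\CC$ come from within $\CC$.

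The key step is then to analyze the dynamics on the boundary face $\BB_\II$ where $\II^c$ indexes the species in a terminal component $\CC$. By Lemma~\ref{lem:bdry} this face is invariant, and by Lemma~\ref{lem:notational}/the boundary-equilibrium theorem the restricted system is exactly the hyperchain system $(\HH,K)[X_{\II^c}]$ on the sub-hyperchain induced by $\CC$. Since $\CC$ is a closed class, for $i \in \II^c$ the function $f_i$ only involves the variables in $\II^c$, so this restricted system is genuinely autonomous and is itself a hyperchain system on fewer species. Here I would split into two cases according to whether $\CC$ is all of the vertex set of $\HH$ minus a nonempty set, i.e. whether $\II \ne \emptyset$. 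Since $\HH$ is not strongly connected, $\CC$ is a proper subset, so $\II$ is nonempty; pick any $j \in \II$. The presence of a boundary face carrying (a copy of) a hyperchain system with its own invariant dynamics, together with the fact that trajectories started in $\BB_\II$ stay there forever and never enter the interior, immediately contradicts persistence — unless one is worried that such boundary trajectories might all converge to the unique positive equilibrium, which is impossible since the equilibrium is interior and $\BB_\II$ is a closed invariant set disjoint from it.

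More carefully, to get the contradiction with persistence I would argue as follows. Persistence means every trajectory starting in the interior of $S_n$ has $\liminf_{t\to\infty} x_i(t) > 0$ for all $i$; equivalently, the boundary $\partial S_n$ contains no $\omega$-limit set of an interior trajectory, or at least it is "repelling" in the weak sense. But actually persistence as usually defined does not by itself forbid boundary equilibria — so the real content must come from the uniqueness of the positive equilibrium combined with strong connectivity being necessary. The cleaner route: show that if $\HH$ is unrooted but not strongly connected, then $(\HH,K)$ has \emph{either} no positive equilibrium \emph{or} more than one (a continuum). Indeed, if a terminal component $\CC$ (vertex set $X_{\II^c}$) supports a positive equilibrium $\hat z$ of $(\HH,K)[X_{\II^c}]$, then by the boundary-equilibrium theorem its extension $z$ (with zeros on $\II$) is a boundary equilibrium of $(\HH,K)$; combined with persistence forcing existence of an interior equilibrium, we would have at least two equilibria in $\overline{S_n}$. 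That alone doesn't contradict the \emph{positive}-equilibrium uniqueness hypothesis, so the decisive point is: persistence implies $z \in \BB_\II$ is \emph{not} an $\omega$-limit of interior points, yet a Lyapunov-type argument on the face shows some interior trajectory must accumulate there, OR one uses that a persistent system's boundary equilibria must be "unstable from outside" in a way incompatible with the face $\BB_\II$ being fully invariant and attracting within itself. I expect the intended argument is shorter: persistence plus uniqueness of the positive equilibrium typically forces the positive equilibrium to be a global attractor relative to the interior, and then standard results (e.g. via an average Lyapunov function) show the boundary must be "unreachable," which forces every face — in particular $\BB_\II$ for each terminal component — to be repelling; a repelling invariant face cannot itself be a closed communicating class supporting its own nontrivial hyperchain dynamics unless no such proper terminal component exists, i.e. $\HH$ is strongly connected.

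The main obstacle I anticipate is making precise the step "persistence $+$ unique positive equilibrium $\Rightarrow$ boundary faces corresponding to closed communicating classes cannot be invariant sub-hyperchains." The cleanest implementation is probably: (1) reduce to $\HH$ unrooted (else no positive equilibrium, contradicting the hypothesis via the permanence-needs-equilibrium lemma applied to persistence — though persistence alone may not give an equilibrium, so one may need to handle this case by noting a persistent rooted hyperchain still forces $f_i \equiv$ const which fails for initial nodes); (2) take a terminal strong component $\CC$, which is cyclic, and observe $(\HH,K)[X_{\CC}]$ is a hyperchain system whose dynamics on the invariant face $\BB_{\II}$ agree with a copy of a lower-dimensional hyperchain; (3) invoke that this copy, being a hyperchain on a cycle or unrooted graph, has its own positive equilibrium $\hat z$ inside its simplex, giving a boundary equilibrium $z$ of $(\HH,K)$; (4) argue $z$ attracts nearby points \emph{within} $\BB_\II$ (or at least that $\BB_\II$ contains a full invariant lower-dimensional simplex of dynamics), and that $\BB_\II$ being invariant and nonempty is incompatible with persistence once uniqueness rules out the interior equilibrium lying "nearby." Pinning down exactly which of these is the load-bearing contradiction — invariance of $\BB_\II$ alone (which contradicts nothing) versus the existence of an \emph{attracting} structure on $\BB_\II$ — is the subtle point, and I would expect the proof to lean on a transversality/index argument or an explicit average Lyapunov function $P(x) = \prod x_i^{p_i}$ à la Hofbauer.
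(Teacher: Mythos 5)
There is a genuine gap, and in fact two separate problems. First, a structural error: you take a \emph{terminal} (sink) strongly connected component $\CC$ of the condensation and assert that ``all incoming edges to vertices of $\CC$ come from within $\CC$.'' That is false: a sink component has no \emph{outgoing} edges to other components, but its vertices may well receive edges from outside. The set that reducibility actually hands you is a proper nonempty $J$ closed under \emph{incoming} arrows (e.g.\ a source component, or the complement of a sink), which is exactly what makes $f_j$ for $j\in J$ depend only on $x_J$ and hence makes the corresponding face carry a genuinely self-sufficient sub-hyperchain. Your chosen set does not have this property, so the ``closed communicating class'' structure your argument leans on is not there as stated.

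Second, and more importantly, you never identify the load-bearing contradiction --- you say so yourself. Invariance of the face $\BB_\II$ is automatic for \emph{every} face of \emph{every} hyperchain system (Lemma \ref{lem:bdry}), so it cannot distinguish strongly connected from reducible networks; you notice this but then only list candidate mechanisms (``attracting structure,'' ``average Lyapunov function,'' ``index argument'') without committing to or executing any of them. For what it is worth, the paper does not reproduce the argument either: it invokes the proof of Theorem \ref{thm:perm_irr} from Section 20.3 of Hofbauer--Sigmund verbatim under the weakened hypotheses. But that argument has a concrete engine your proposal is missing. Persistence lets you run the time-average argument: $\frac{1}{T}\left(\log x_i(T)-\log x_i(0)\right)\to 0$ forces every accumulation point of $\frac1T\int_0^T x(t)\,dt$ to be an interior equilibrium, hence equal to the unique one $m$, with $f_i(m)\equiv c>0$. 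Then, for a predecessor-closed $J$, the normalized restriction $\hat m = m_J/\mu$ (with $\mu=\sum_{j\in J}m_j<1$) is a boundary rest point satisfying $\bar\rho(\hat m)=c/\mu$ while $f_k(\hat m)\le c/\mu$ for every $k\notin J$; that is, a saturated boundary rest point whose transversal eigenvalues are all nonpositive (and, after choosing $J$ appropriately, negative), so it attracts interior orbits --- and that is the contradiction with persistence. Without some version of this quantitative step, which is where the hypothesis of a \emph{unique} positive equilibrium actually gets used, the proposal does not close.
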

The proof by Hofbauer \& Sigmund, in Section 20.3 of \cite{hofbauer1988theory}, of Theorem \ref{thm:perm_irr} holds almost verbatim for Theorem \ref{thm:pers_irr} after weakening the hypotheses. We omit reproduction of the proof here, while leaving a note for the reader that the matrix $\vect A$, with entries $a_{jk}$, in \cite{hofbauer1988theory} is transpose of the matrix $K$ in this article. 

It is well-known that if $\HH$ is a cycle then $\HH$ is permanent, i.e. $(\HH,K)$ is permanent for all positive rate constants $K>0$ (see \cite{schuster1979dynamical}). We show the converse.

Let $e_i$ be the unit vector with $n$ components whose $i$th component is $1$ and the other components are zero. Let $\DD$ be a linear digraph. Let $i_s$ ($i_p$) denote the succeeding (preceding) node of node $i$ in $\DD$. In other words, $i_s$ is the unique node in $\DD$ such that $i \to i_s \in \DD$, and $i_p$ is the unique node in $\DD$ such that $i_p \to i \in \DD$. Then the $i$th row of $A(\DD)$ is $e_{i_s}$ and the $i$th column of $A(\DD)$ is $e_{i_p}$.

\begin{lemma} \label{lem1}
Let $\DD$ be a linear digraph on $n$ nodes. Let $A(\DD)$ be the adjacency matrix of $\DD$. Then the following hold: 
\been
\item $A(\DD)^T e_i = e_{i_s}$. 
\item $A(\DD)^T \one = \one$. 
\enen
\end{lemma}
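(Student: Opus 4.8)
The plan is to unwind the definitions of the adjacency matrix and of a linear digraph. Recall that $(A(\DD))_{i,j} = 1$ exactly when $i \to j \in \DD$, and that in a linear digraph every vertex has outdegree and indegree equal to $1$. Thus vertex $i$ has a unique outgoing edge $i \to i_s$, so the $i$th row of $A(\DD)$ is the row vector $e_{i_s}^T$; equivalently, the $i$th column of $A(\DD)^T$ is $e_{i_s}$. Since $A(\DD)^T e_i$ is precisely the $i$th column of $A(\DD)^T$, this gives $A(\DD)^T e_i = e_{i_s}$, proving part (1).

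For part (2), I would write $\one = \sum_{i=1}^n e_i$ and combine linearity with part (1): $A(\DD)^T \one = \sum_{i=1}^n A(\DD)^T e_i = \sum_{i=1}^n e_{i_s}$. It then suffices to note that the successor map $i \mapsto i_s$ is a bijection of $\{1, \ldots, n\}$: it is single-valued because every vertex has outdegree $1$, and injective because every vertex has indegree $1$ (if $i_s = j_s$, then $i \to i_s$ and $j \to i_s$ are two edges entering $i_s$, forcing $i = j$), hence a bijection of the finite set. Therefore $\sum_{i=1}^n e_{i_s} = \sum_{j=1}^n e_j = \one$. Alternatively, and more directly, the $i$th entry of $A(\DD)^T \one$ equals $\sum_j (A(\DD))_{j,i}$, which counts the edges entering vertex $i$, i.e. the indegree of $i$, which is $1$ since $\DD$ is linear.

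There is essentially no obstacle here: the statement is immediate from the definitions. The only points requiring a modicum of care are the transpose bookkeeping — the rows of $A(\DD)$ become the columns of $A(\DD)^T$ — and the elementary fact that in a linear digraph the successor assignment is a permutation of the vertex set, which is precisely what makes part (2) work and will be reused when analyzing cycles in later sections.
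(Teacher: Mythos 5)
Your proof is correct and follows essentially the same route as the paper: both reduce part (1) to reading off a row/column of the adjacency matrix from the outdegree-one and indegree-one conditions, and both obtain part (2) by summing part (1) over $i$ and using that $i \mapsto i_s$ is a permutation. Your explicit justification that the successor map is a bijection (and the alternative indegree-counting argument) merely spells out what the paper states in one line.
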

\begin{proof}
Note that the $j$th row of $A(\DD)^T$ is $e_{j_p}$ and $e_{j_p} \cdot e_i = \delta_{j_pi}$ which is equal to $1$ if and only if $j_p=i$ which is equivalent to $j=i_s$. 
Since $\DD$ is a linear graph, $A(\DD)^T$ produces a permutation of $e_i$ and so $\one = \sum_{i=1}^n e_i$, $A(\DD)^T \one = \sum_{i=1}^n A(\DD)^T e_i = \one$.
\end{proof}

\begin{theorem} \label{thm:perm_then_cycle}
If $\HH$ is permanent then $\HH$ is cyclic.
\end{theorem}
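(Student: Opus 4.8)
The claim is the converse of the classical permanence of hypercycles, so here ``$\HH$ is cyclic'' should be read as ``$\HH$ is a single directed $n$-cycle'' (a hypercycle). The plan is to prove the contrapositive: if $\HH$ is not a single $n$-cycle, then $(\HH,K)$ is not permanent for a suitable $K>0$. If $\HH$ is not strongly connected, Theorem~\ref{thm:perm_irr} already shows $(\HH,K)$ is not permanent for any $K$, so assume $\HH$ is strongly connected but not a single $n$-cycle; I will then exhibit a $K>0$ witnessing non-permanence.

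The combinatorial heart is the following. Take a shortest directed cycle $C$ of $\HH$; it is necessarily \emph{chordless} (a chord of $C$ would close a strictly shorter cycle), hence $\HH[V(C)] = C$. Moreover $C$ cannot be Hamiltonian: if it were, chordlessness would force $\HH = C$, contradicting that $\HH$ is not a single $n$-cycle. Thus $I := V(C)$ is a \emph{proper} vertex subset and $\HH[I] = C$ is a hypercycle on $|I| < n$ species (the degenerate case $|I| = 1$ is a self-loop, handled separately by the same idea). Now build $K>0$ by assigning weight $1$ to every edge of $C$ and a small weight $\ep > 0$ to every other edge of $\HH$. By Lemma~\ref{lem:bdry} the face $F = \{x \in S_n : x_j = 0 \text{ for } j \notin I\}$ is invariant, and on $F$ the system is exactly the $|I|$-hypercycle $(C,\one)$: it is permanent within $F$ by the classical hypercycle theorem \cite{schuster1979dynamical}, and $\hat z = \tfrac{1}{|I|}\one_I$ is its positive equilibrium — this is where Lemma~\ref{lem1} enters, via $A(C)^{T}\one = \one$. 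Consequently there is a compact set $C_F \subset \operatorname{relint}(F)$ attracting all $F$-interior orbits, and $\bar\rho$ is bounded below by some $\beta > 0$ on $C_F$.

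Finally, one shows that for $\ep < \beta$ an interior orbit of $(\HH,K)$ started near a point of $C_F$ is driven to $\partial S_n$. For $j \notin I$ one has $f_j(x) \le \ep \sum_{X_\ell \dashrightarrow X_j} x_\ell \le \ep$, so $\dot x_j = x_j\big(f_j(x) - \bar\rho(x)\big) < 0$ while $x_j > 0$ and $x$ stays near $C_F$; meanwhile the $I$-coordinates, being an $O(\ep)$-perturbation of the permanent $|I|$-hypercycle, remain near $C_F$. Hence the orbit converges to $\operatorname{relint}(F) \subset \partial S_n$, so $(\HH,K)$ is not permanent, contradicting permanence of $\HH$. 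I expect this last limiting step to be the main obstacle: one must control the $O(\ep)$-perturbed interior dynamics so the orbit cannot escape a neighbourhood of $C_F$, and confirm that $\bar\rho$ really stays bounded below there. (For $|I| \le 4$ there is a shortcut: then $\hat z$ lifts, for small $\ep$, to an asymptotically stable boundary equilibrium of $(\HH,K)$ — its transversal eigenvalues $f_j(z) - \bar\rho(z)$ are negative by the estimate above and the intra-face eigenvalues have negative real part by Theorem~\ref{thm:eigsofjac} applied to the $|I|$-hypercycle — which contradicts permanence immediately; for $|I| \ge 5$ the boundary equilibrium is unstable within $F$ and the ``trapping face'' argument is genuinely needed.)
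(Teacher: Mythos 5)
Your construction is genuinely different from the paper's, and it leaves the decisive step unproven. The paper never argues with boundary attractors at all: after the same reduction to the strongly connected case, it takes a spanning linear subgraph $\ol{\CC}=\CC_1\cup\dots\cup\CC_\ell$, one extra edge $e=X_a\dashrightarrow X_b$, assigns weights $1$ on $\ol{\CC}$, $2$ on $e$, and $\ep$ elsewhere, and then shows by a direct linear-algebra computation that at $\ep=0$ the unique solution of $K_0^Tx=(x^TK_0x)\one$ is $\frac{1}{n-2}(\one-2e_c)$, which has a negative entry. Hence $(\HH,K_0)$ has no positive equilibrium, continuity in $\ep$ extends this to small $\ep>0$, and non-permanence follows from the standard fact (the Lemma opening Section 6) that a permanent system must have an interior equilibrium. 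No analysis of orbits is needed. Your route instead isolates a chordless, non-Hamiltonian shortest cycle $C$ (that combinatorial step is correct) and tries to turn the face $F$ spanned by $V(C)$ into an attractor. The gap is exactly where you flag it: you must show that an interior orbit started near $C_F$ \emph{stays} in the region where $\bar\rho\ge\beta/2$ for all forward time. The transversal estimate $\dot x_j\le x_j(\ep-\bar\rho(x))$ only contracts while the $I$-coordinates remain near $C_F$, and for $|I|\ge 5$ the face attractor is not an equilibrium, so confining the $O(\ep)$-perturbed $I$-dynamics requires a robustness-of-attractors (normally attracting invariant set / extended average-Lyapunov-function) argument that you have not supplied. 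As written, the proof is incomplete at its central step; note also that your $K$ may well still admit an interior equilibrium, so you cannot fall back on the paper's ``no positive equilibrium'' lemma.

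A secondary error: your $|I|\le 4$ shortcut asserts that the intra-face eigenvalues at $\hat z=\one_I/|I|$ have negative real part. For $|I|=4$ the relevant eigenvalues are $\frac{1}{4}i,\ -\frac{1}{4},\ -\frac{1}{4}i$ (scaled fourth roots of unity), so two of them are purely imaginary: the boundary equilibrium is not hyperbolic and ``linearly/exponentially stable'' fails; you would need the Lyapunov-function proof of stability for the $4$-hypercycle plus a center-manifold-type argument to combine it with the negative transversal eigenvalues. The shortcut is clean only for $|I|\in\{1,2,3\}$. If you want to salvage your approach, the honest statement is that it reduces the theorem to a nontrivial perturbation lemma about attractors on invariant faces of replicator systems; the paper's equilibrium-counting argument avoids this entirely and is the route you should compare against.
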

\begin{proof}
We prove the contrapositive, i.e. if $\HH$ is not cyclic then $\HH$ is not permanent. Suppose $\HH$ is a non-cyclic hyperchain on $n$ species. 

Suppose that $\HH$ does not have a  spanning linear subgraph. Then for any $K$, $(\HH,K)$ either has no positive equilibria or has infinitely many positive equilibria. In either case, $(\HH,K)$ is not permanent. If $\HH$ is not strongly connected, then $(\HH,K)$ is not permanent for any $K$, by the Theorem in Section 20.3 of \cite{hofbauer1988theory}. 

So we assume that $\HH$ is  strongly connected and has a  spanning linear subgraph. Let $\CC_1, \ldots, \CC_\ell$ be the set of cycles that constitute a  spanning linear subgraph of $\HH$. If $\ell=1$, i.e. if $\HH$ is Hamiltonian, $\HH$ must have at least one edge that is not in $\CC_1$, because otherwise $\HH$ is cyclic. If $\ell>1$, there must be at least one edge in $\HH \setminus \cup_{k=1}^\ell \CC_k$, because otherwise $\HH$ is disconnected which means that $\HH$ is not strongly connected. 
Let $\ol{\CC} :=\cup_{k=1}^\ell \CC_k$, and let $e$ be an edge not in $\ol{\CC}$. 
Note that $\HH = \ol{\CC} \dot\cup \{e\} \dot\cup \left(\HH \setminus \left(\ol{\CC} \cup \{e\} \right)\right) =: \HH_1 \dot\cup \HH_2 \dot\cup \HH_3$, where the last set may possibly be empty.  By construction, both $\HH_1$ and $\HH_1 \cup \HH_2$ are sub-hyperchains on $n$ species. 

Define $K_\ep$ as follows: all edges in $\HH_1$ have rate constant $1$, all edges in $\HH_2$ have rate constant $2$, and all edges in $\HH_3$ have rate constant $\ep$. Label the species such that the unique edge in $\HH_2$ is $X_a \to X_b$. Let $c$ be the label of the unique vertex such that $X_c \to X_b \in \ol{\CC}$. 

We first show that the sub-hyperchain system $(\HH, K_0) = (\HH_1 \cup \HH_2, K_0)$ does not have a positive equilibrium. We do this by showing that the equation $K_0^T x = (x^T K_0 x) \one$ has a unique solution and that this solution is outside the nonnegative orthant.
By interchanging rows, $K_0$ can be brought into a triangular form with all diagonal entries equal to $1$, so $\det(K_0)= \pm 1$ which implies that $K_0 $ is invertible and the equation $K_0^T x = (x^T K_0 x) \one$ has a unique solution. Now we show that this unique solution is 
 $z := \frac{1}{n-2}\left(\one - 2 e_c\right) \notin \R^n_{\ge 0}$. By Lemma \ref{lem1}, $K_0^T |_{\HH_1} z = \frac{1}{n-2}\left(\one - 2 e_b\right)$.  The only nonzero entry of $K_0^T |_{\HH_2}$ is the $(b,a)$ entry which is $2$ and so $K_0^T |_{\HH_2} z = \frac{1}{n-2}2e_b$. 
Thus, $K_0^T z = \frac{1}{n-2}\one$ and $z^T K_0^T z = \frac{1}{n-2}$, which implies that $K_0^T z = \left( z^T K_0^T z \right) \one$. Thus $(\HH,K_0)$ does not have a positive equilibrium and is therefore not permanent. 

By continuity of solutions with respect to the parameter $\ep$, it follows that $(\HH,K_\ep)$ does not have a positive equilibrium for sufficiently small $\ep$ and so $\HH$ is not permanent. 
\end{proof}
The Hamiltonian property of a graph is a stronger property than irreducibility for capturing the notion of positive catalytic influence that permeates through the network. While irreducibility does not guarantee existence of a permanent system, we prove in the next theorem that the Hamiltonian property does. The proof is an adaptation of the proof that an $n$-hypercycle is permanent, which appears as Theorem 12.3.1 in \cite{hofbauer1998evolutionary}. 
\begin{theorem} \label{thm:hamilton_perm}
If $\HH$ is Hamiltonian, then there is a $K>0$ such that $(\HH,K)$ is permanent. 
\end{theorem}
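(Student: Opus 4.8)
The plan is to adapt, to the Hamiltonian setting, the average--Lyapunov--function proof that an $n$-hypercycle is permanent (Theorem~12.3.1 of \cite{hofbauer1998evolutionary}). The case $n=1$ is trivial, so assume $n\ge 2$, and after relabeling species let a Hamiltonian cycle of $\HH$ be $X_1\dashrightarrow X_2\dashrightarrow\cdots\dashrightarrow X_n\dashrightarrow X_1$, with indices read mod $n$. Fix $\ep\in\bigl(0,\tfrac1{2n}\bigr)$ and let $K=K_\ep$ assign rate constant $1$ to each of the $n$ cycle edges $X_i\dashrightarrow X_{i+1}$, rate constant $\ep$ to every other edge of $\HH$, and $0$ to non-edges; this is an admissible $K>0$. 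I will use the average Lyapunov function $P(x)=x_1x_2\cdots x_n$: it is positive on $\mathrm{int}\,S_n$, vanishes on $\partial S_n$, and, by \eqref{eq:comp}, along orbits $\dot P/P=\psi(x):=\sum_{i=1}^n\bigl(f_i(x)-\ol\rho(x)\bigr)=\sum_{i=1}^n f_i(x)-n\,\ol\rho(x)$, a polynomial that is continuous on all of $S_n$. Since forward orbits are bounded ($S_n$ compact), the average Lyapunov function theorem (Hofbauer; see \cite{hofbauer1998evolutionary}) tells us that $(\HH,K_\ep)$ is permanent as soon as $\sup_{t>0}\int_0^t\psi(x(s))\,ds>0$ for every orbit $x(\cdot)$ lying in $\partial S_n$. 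The empirical measures $\tfrac1t\int_0^t\delta_{x(s)}\,ds$ of such an orbit are precompact and all their weak-$*$ limits are flow-invariant probability measures supported on $\partial S_n$, so it is enough to prove
\[
\int_{S_n}\psi\,d\mu\ \geq\ 1-2n\ep\ >\ 0 \qquad\text{for every flow-invariant probability measure }\mu\text{ with }\mathrm{supp}\,\mu\subseteq\partial S_n ,
\]
which then forces $\limsup_{t\to\infty}\tfrac1t\int_0^t\psi(x(s))\,ds\geq 1-2n\ep>0$ for every boundary orbit.

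To prove the displayed inequality I may, by ergodic decomposition, take $\mu$ ergodic. Each hyperplane $\{x_i=0\}$ is flow-invariant (Lemma~\ref{lem:bdry}), so $\mu(\{x_i=0\})\in\{0,1\}$; set $S:=\{i:\mu(\{x_i>0\})=1\}$, so that $\mu$-almost every point has support exactly $S$, and $\varnothing\neq S\subsetneq\{1,\dots,n\}$ because $\mu$ is carried by $\partial S_n$. For $i\in S$ the time-averaged per-capita growth rate of the non-extinct species $X_i$ vanishes, $\int(f_i-\ol\rho)\,d\mu=0$ (Birkhoff's theorem applied to $t\mapsto\tfrac1t\log x_i(t)=\tfrac1t\int_0^t(f_i-\ol\rho)(x(s))\,ds$, using $x_i\le 1$ and Poincar\'e recurrence to rule out a negative limit), hence $\int\psi\,d\mu=\sum_{i\notin S}\int(f_i-\ol\rho)\,d\mu$. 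Because $S$ is a proper nonempty subset of the cyclically ordered vertex set it has both an \emph{entry node} $i_0\in S$ with $i_0-1\notin S$ and an \emph{exit node} (some $b\in S$ with $b+1\notin S$, indeed one for each run of $S$ along the cycle). The unique rate-$1$ in-edge of $X_{i_0}$ comes from $X_{i_0-1}$, which is extinct $\mu$-a.e., so $f_{i_0}(x)\le\ep\sum_\ell x_\ell=\ep$ $\mu$-a.e., and with $\int(f_{i_0}-\ol\rho)\,d\mu=0$ this gives $\int\ol\rho\,d\mu=\int f_{i_0}\,d\mu\le\ep$. For every non-exit node $k\in S$ (i.e.\ $k,k+1\in S$), the rate-$1$ edge $X_k\dashrightarrow X_{k+1}$ yields $f_{k+1}(x)\ge x_k$, hence $\int x_k\,d\mu\le\int f_{k+1}\,d\mu=\int\ol\rho\,d\mu\le\ep$; since $\sum_{i\in S}x_i=1$ $\mu$-a.e., summing over non-exit $k$ shows $\int\bigl(\sum_{b\ \mathrm{exit}}x_b\bigr)\,d\mu\ge 1-\ep|S|\ge 1-n\ep$. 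Finally $b\mapsto b+1$ maps the exit nodes injectively into $\{1,\dots,n\}\setminus S$, every $f_i\ge 0$, and $f_{b+1}(x)\ge x_b$, so $\sum_{i\notin S}\int f_i\,d\mu\ge\sum_{b\ \mathrm{exit}}\int x_b\,d\mu\ge 1-n\ep$, while $\sum_{i\notin S}\int\ol\rho\,d\mu\le n\ep$; therefore $\int\psi\,d\mu\ge(1-n\ep)-n\ep=1-2n\ep>0$. The Dirac masses $\mu=\delta_{e_j}$ are the special case $S=\{j\}$, where the estimate reduces to $\psi(e_j)=\sum_{i\neq j}k_{ji}\ge k_{j,j+1}=1>0$, positivity being exactly where Hamiltonicity (no terminal nodes) is used.

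The reduction to $K_\ep$ is what makes $\HH$ imitate its dominant Hamiltonian cycle --- the $n$-hypercycle $(\CC,\one)$, whose permanence is Theorem~12.3.1 of \cite{hofbauer1998evolutionary} --- and the $O(\ep)$ bookkeeping above is there precisely to absorb the ``weak'' edges uniformly. I expect that bookkeeping, or rather the need to do it at the level of invariant measures rather than orbits, to be the main obstacle: unlike the pure $n$-hypercycle, all of whose boundary orbits converge to a single vertex $e_j$ with $\psi(e_j)>0$ (so Theorem~12.3.1 is checked orbit-by-orbit), the faces of $S_n$ here carry the dynamics of induced sub-hyperchains $(\HH[X_{\II^c}],K_\ep)$ which may contain cycles (necessarily through an $\ep$-edge) and hence may have periodic or more complicated attractors; passing to invariant measures and invoking $\int(f_i-\ol\rho)\,d\mu=0$ for species present under $\mu$ is what makes the argument immune to the shape of the boundary dynamics. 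Once the displayed inequality holds, the average Lyapunov function theorem closes the proof.
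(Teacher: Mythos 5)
Your proof is correct, and it shares the paper's skeleton --- the same choice of $K$ (rate constant $1$ on the Hamiltonian cycle, a small $\ep$ on the remaining edges; the paper uses $\tfrac{1}{4n}$ where you use $\ep<\tfrac{1}{2n}$), the same average Lyapunov function $P(x)=\prod_i x_i$, and the same reduction via Hofbauer's criterion to a positivity condition for time averages of $\Psi$ over boundary orbits --- but you verify that boundary condition by a genuinely different argument. The paper works orbit by orbit: assuming a boundary orbit violates the time-average bound $\frac1T\int_0^T \ol\rho\,dt<\frac1n$, it integrates $\frac{d}{dt}\log x_{i+1}$ and runs an extinction cascade around the Hamiltonian cycle (``if $x_i(t)\to 0$ then $x_{i+1}(t)\to 0$''), concluding that all species vanish, contradicting $\sum_i x_i=1$. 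You instead pass to ergodic invariant measures $\mu$ on $\bd S_n$, use the zero-average-growth identity $\int(f_i-\ol\rho)\,d\mu=0$ for species in the support, and do the bookkeeping through the entry/exit nodes of the support set along the cycle to get the uniform bound $\int\Psi\,d\mu\ge 1-2n\ep>0$. Each route has its merits: the paper's cascade is elementary (no ergodic theory, no weak-$*$ compactness) and already robust to arbitrary boundary dynamics since it only uses integral bounds along the given orbit; your measure-theoretic version yields a clean quantitative lower bound on the averaged Lyapunov exponent and makes explicit \emph{why} the argument is insensitive to the shape of the boundary attractors, at the cost of invoking Birkhoff's theorem, Poincar\'e recurrence, and ergodic decomposition. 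Both are complete proofs of the theorem.
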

\begin{proof}
Let $\HH$ be a hyperchain on $n$ species and let $\CC$ be a Hamiltonian cycle of $\HH$. Label the species along this cycle $(X_1, \ldots, X_n)$. Define $K$ as follows: $k_{ij} = 1$ if $X_i \dasharrow X_j \in \CC$ and $k_{ij} = \frac{1}{4n}$ if $X_i \dasharrow X_j \in \HH \setminus \CC$. 
The mass action dynamical system for $(\HH, K)$ is
\begin{align*}
\dot x_i = x_i \left( x_{i-1} +  \sum_{j \ne i-1} k_{ji} x_j - \rho(x) \right) \quad \mbox{ for } 1 \le i \le n,
\end{align*}
where $\rho(x) = \sum_{i=1}^n x_i \left( x_{i-1} +  \sum_{j \ne i-1} k_{ji} x_j \right)$ and $k_{ji}$ for $j \ne i-1$ is either $\frac{1}{4n}$ or $0$ depending on whether $X_j \dasharrow X_i  \in \HH$ or not. 
Let $P(x) = \prod_{i=1}^n x_i$ and so 
\[
\dv{P(x)}{t} = \sum_{i=1}^n \pdv{P(x)}{x_i} \dv{x_i}{t} = P(x) \sum_{i=1}^n \frac{1}{x_i} \dv{x_i}{t}, 
\]
which implies that 
\[
\Psi(x) := \frac{\dot P(x)}{P(x)} = \sum_{i=1}^n \left( x_{i-1} +  \sum_{j \ne i-1} k_{ji} x_j - \rho(x) \right) = 1- n \rho(x)+ \sum_{i=1}^n  \sum_{j \ne i-1} k_{ji} x_j  
\]
We use Theorem 13.2.1 of \cite{hofbauer1988theory} which states that, if for all $x \in \bd S_n$ there is a $T>0$ such that $\int_0^T \Psi(x(t)) dt > 0$ then the system is permanent. Note that 
\begin{align*}
&\frac{1}{T} \int_0^T \Psi(x(t)) dt = \frac{1}{T} \int_0^T \left(1- n \rho(x) + \sum_{i=1}^n  \sum_{j \ne i-1} k_{ji} x_j \right) dt >0 \\
\iff & \frac{1}{T} \int_0^T \rho(x(t)) dt < \frac{1}{nT} \int_0^T \left(1+ \sum_{i=1}^n  \sum_{j \ne i-1} k_{ji} x_j \right) dt
\end{align*}
Since the right hand side is greater than $\frac{1}{nT} \int_0^T 1 dt = \frac{1}{n}$, it suffices to show that $ \frac{1}{T} \int_0^T \rho(x(t)) dt < \frac{1}{n}$ for all $x \in \bd S_n$ and for some $T>0$.  By way of contradiction, suppose that  there is an $x \in \bd S_n$ and a $T>0$ such that $\frac{1}{T} \int_0^T \rho(x) dt \ge \frac{1}{n}$ where $x(0) = x$. Since $x \in \bd S_n$, there is an index $m \in \{1,\ldots,n\}$ such that $x_m(t) = 0$ for $t \ge 0$. We show by way of induction that if $x_i(t) \to 0$ then $x_{i+1}(t) \to 0$. Assume that $x_i(t) \to 0$ and suppose that $x_{i+1}(t) > 0$. Then 
\begin{align*}
\dv{}{t} \log x_{i+1}(t) = \frac{\dot x_{i+1}(t)}{x_{i+1}(t)} = x_i(t)  + \frac{1}{4n} \sum_{\substack{X_j \dashrightarrow X_{i+1} \in \HH : j \ne i}}  x_j(t) - \rho(x(t)).
\end{align*}
Integrating from $0$ to $T$ and dividing by $T$, 
\begin{align*}
& \frac{1}{T} \left(\log(x_{i+1}(T)) - \log(x_{i+1}(0)) \right)  = \frac{1}{T} \int_0^T x_i(t) dt + \frac{1}{4nT} \int_0^T \sum_{\substack{X_j \dashrightarrow X_{i+1} \in \HH : \\ j \ne i}}  x_j(t) dt - \frac{1}{T} \int_0^T \rho(x) dt \\
& \le \frac{1}{T} \int_0^T x_i(t) dt + \frac{1}{4nT} \int_0^T 1 dt - \frac{1}{T} \int_0^T \rho(x) dt 
 \le \frac{1}{T} \int_0^T x_i(t) dt + \frac{1}{4n}  - \frac{1}{n}. 
\end{align*}
Since $x_i(t) \to 0$, $\frac{1}{T} \int_0^T  x_i(t) dt < \frac{1}{4n}$ for sufficiently large $T$, 
$
\frac{1}{T} \left(\log(x_{i+1}(T)) - \log(x_{i+1}(0)) \right) \le \frac{1}{4n} + \frac{1}{4n} - \frac{1}{n} = -\frac{1}{2n}, 
$
which implies that $x_{i+1}(T) < x_{i+1}(0) e^{-T/2n} \to 0$. Therefore, by induction $x_j(t) \to 0$ for all $j$ which contradicts $\sum_i x_i(t)=1$. So for all $x \in \bd S_n$ there is a $T>0$ such that $\int_0^T \Psi(x(t)) dt > 0$, which in turn implies that $(\HH,K)$ is permanent. 
\end{proof}
The results on the dynamics of relative concentrations in a hyperchain system are summarized in Figure \ref{fig:sum_implications}.

\subsection*{Acknowledgments}
The authors thank the American Institute of Mathematics (AIM) for hosting the SQuaRE
workshop ``Dynamical properties of deterministic and stochastic models of reaction networks'' in March 2019, where the discussion on this topic was initiated. 
BJ is grateful to the Department of Mathematics, University of Wisconsin-Madison for hosting a sabbatical visit in Spring 2019. GC thanks the National Science Foundation for support through the DMS-1816238 grant, and the Simons Foundation for support through Simons Fellows in Mathematics. Thanks to Andr{\'e} K{\"u}ndgen for helpful comments.

\bibliographystyle{unsrt}
\bibliography{hc}

\end{document}